\renewcommand*{\backrefalt}[4]{%
\ifcase #1 %
\color{red} {No citations.}%
\or
(p.~#2).%
\else
(pp.~#2).%
\fi
}
\setlist[enumerate,1]{label={\rm(\roman{enumi})}}
\setlist[enumerate,2]{label={\rm(\alph{enumii})}, ref=(\roman{enumi})-(\alph{enumii})}
\colorlet{tblproven}{green!80!black}
\newcommand\proven{\rowcolor{tblproven}}
\newcommand{\modconja}{}
\newcommand{\modconj}{}
\newcolumntype{C}{>{$}c<{$}} 
\theoremstyle{plain}
\newtheorem{theorem}{Theorem}[section]
\newtheorem{thma}{Theorem}
\renewcommand{\thethma}{\Alph{thma}}
\crefname{thma}{Theorem}{Theorems}
\numberwithin{equation}{section}
\newtheorem{lemma}[theorem]{Lemma}
\newtheorem{prop}[theorem]{Proposition}
\newtheorem*{expectation}{Expectation}
\newtheorem{corollary}[theorem]{Corollary}
\theoremstyle{definition}
\newtheorem{remark}[theorem]{Remark}
\newtheorem{definition}[theorem]{Definition}
\newtheorem{example}[theorem]{Example}
\DeclareMathOperator{\Hom}{Hom}
\newcommand{\Z}{\mathbb{Z}}
\newcommand{\ZZ}{\Z}
\newcommand{\FF}{\mathbb{F}}
\newcommand{\Q}{\mathbb{Q}}
\newcommand{\QQ}{\Q}
\newcommand{\C}{\mathbb{C}}
\newcommand{\HH}{\mathbb{H}}
\newcommand{\llb}{\llbracket}
\newcommand{\rrb}{\rrbracket}
\DeclareMathOperator{\Gal}{Gal}
\DeclareMathOperator{\tr}{Tr}
\DeclareMathOperator{\GL}{GL}
\DeclareMathOperator{\SL}{SL}
\DeclareMathOperator{\Frob}{Frob}
\DeclareMathOperator{\ann}{ann}
\DeclareMathOperator{\End}{End}
\DeclareMathOperator{\Var}{Var}
\renewcommand{\aa}{\alpha}
\renewcommand{\Im}{\operatorname{Im}}
\newcommand{\mm}{{\mathfrak m}}
\renewcommand{\pmod}[1]{\mkern4mu(\mathrm{mod} \, #1)}
\newcommand{\cmod}[1]{\mkern4mu\mathrm{mod}\ #1}
\newcommand{\bel}{\delta}
\newcommand{\del}{D}
\def\section{\@startsection{section}{1}%
  \z@{2.5\linespacing\@plus.5\linespacing\@minus.5\linespacing}{.5\linespacing}%
  {\normalfont\scshape\centering}}
\begin{document}

\title{On the parity of coefficients of eta powers}

\author{Steven Charlton}
\address{Max Planck Institute for Mathematics, Vivatsgasse 7, 53111 Bonn, Germany}
\email{charlton@mpim-bonn.mpg.de}

\author{Lukas Mauth}
\address{Department of Mathematics and Computer Science, Division of Mathematics, University of Cologne, \linebreak Weyertal 86-90, 50931 Cologne, Germany}
\email{lmauth@uni-koeln.de}

\author{Anna Medvedovsky}
\address{Department of Mathematics, University of Arizona, 617 N.~Santa Rita Ave, Tucson, AZ 85721, USA}
\email{medved@arizona.edu}

\keywords{Dedekind eta function, modular forms modulo 2, Galois representations, density, partitions}

\subjclass[2020]{Primary: 11F33;
Secondary: 11F03, 11F20, 11F80, 11P83, 11R45
}

\begin{abstract} 
We consider a special subsequence of the Fourier coefficients of powers of the Dedekind~$\eta$-function, analogous to the sequence \( \delta_\ell \coloneqq 24^{-1} \pmod{\ell} \) on which exceptional congruences of the partition function are supported.
Therefrom we define a notion of density \( \del(r) \) for a normalized eta-power~$\eta^r$ measuring the proportion of primes $\ell$ for which the order at infinity of $U_\ell (\eta^r)$  modulo 2 is maximal. We relate $\del(r)$ to a notion of density measuring nonzero prime Fourier coefficients introduced by Bella\"iche, and use this to completely  
classify the vanishing of and establish upper bounds for~\( \del(r) \).
Furthermore, for several infinite families of \( \eta \) powers corresponding to dihedral/CM mod-2 modular forms 
in the sense of Nicholas-Serre and Bella\"iche, we explicitly compute the densities \( \del \).
We rely on Galois-theoretic techniques developed by Bellaïche in level 1 and extend these to level 9. En passant we take the opportunity to communicate proofs of two of Bellaïche's unpublished results on densities of mod-$2$ modular forms.
\end{abstract}

\maketitle

{
\parskip=0pt           
	\setcounter{tocdepth}{1}
	\tableofcontents
	\setcounter{tocdepth}{2}
}

\section{Introduction}

For a non-negative integer \( n \) one defines \( p(n) \) as the number of non-increasing sequences of positive integers which sum to \( n \).  The value \( p(n) \) is the \emph{number of partitions} of \( n \), and \( p \)  is called the \emph{partition function}.  Understanding the arithmetic properties of \( p(n) \) is well-known to be a very challenging task, due to the lack of sufficient tools and machinery.  In particular, the Partition Parity Conjecture~\cite{parkin-shanks} asserts that the values \( p(n) \) are distributed equally between odd and even values, namely
\[
    \lim_{x \to \infty} \frac{\#\{ n \leq x \mid p(n) \equiv 0 \pmod{2} \}}{x} = \frac{1}{2} \,.
\]
One can begin to make progress in this question by studying the generating series of the partition function values,
\[
     \sum_{n=0}^\infty p(n) q^n = \prod_{n=1}^\infty \frac{1}{1 - q^n} = q^{1/24} \eta^{-1}(\tau) \,.
\]
Here \( \eta(\tau) \) is the Dedekind eta function defined, for \( \tau \) in the upper half-plane \( \HH \coloneqq \{ \tau \in \C \mid \Im{\tau} > 0 \} \), by the following infinite product
\[
    \eta(\tau) \coloneqq q^{1/24} \prod_{n=1}^\infty (1 - q^n), \quad q = \exp(2 \pi i \tau) \,.
\]
It is convenient to introduce 
\begin{equation}\label{P} P(q) \coloneqq \eta^{-1}(24 \tau) = \sum_{n=0}^\infty p(n) q^{24n-1}\,,\end{equation} where the rescaling naturally produces a Laurent series (i.e.~bounded-below integral powers of $q$), and the shift allows us to generalize directly to other eta-powers.  The Dedekind eta function is a modular form of weight~$\frac{1}{2}$, and up to a $q$-power, the partition function generating series is then a weakly holomorphic modular form of weight~$-\frac{1}{2}$.  In general understanding the coefficients of negative-weight, and half-integral--weight modular forms seems to be a difficult problem.  
We briefly survey progress on partition parity thus far. Already Kolberg~\cite{kolberg} established that there are infinitely many partition values which are odd (respectively, even),  without identifying the values. A major improvement came from Ono  who established that there are infinitely many $n$ such that \( p(an+b) \equiv 0 \pmod{2} \) for any given $a$, $b$ \cite{Ono1}. Later Ono with Boylan showed that the same holds for \( p(an+b) \equiv 1 \pmod{2} \) when $a$ is a power of $2$ \cite{Boylan-Ono}, improving an earlier conditional result in \cite{Ono1}. Cossaboom and Zhu \cite{CZ} recently made the Boylan--Ono  construction explicit. Separately, Radu removed the power-of-$2$ condition on $a$ from Boylan--Ono and showed that no universal congruence of the form $ p(an+b) \equiv 0 \pmod{2}$ is possible \cite[Theorem~1.3~\&~Remark~1.7]{radu}.
By studying powers of \( \Delta(\tau) \coloneqq \eta(\tau)^{24} \) mod~$2$ Bella\"iche and Nicolas \cite[Théorèmes~5~\&~6]{BN} show, for $x > 2$,
\begin{align*}
    & \#\{ n \leq x \mid p(n) \equiv 0 \pmod{2} \} \geq 0.069 \sqrt{x} \log\log(x) \,, \\
    & \#\{ n \leq x \mid p(n) \equiv 1 \pmod{2} \} \geq \frac{0.037\sqrt{x}}{\log^{7/8}{x}} \,.
\end{align*}
These seem to be the best known bounds currently, but they are far from sufficient to even establish a nonzero natural density
\[
     \lim_{x \to \infty} \frac{\#\{ n \leq x \mid \text{$p(n) \equiv 0 \pmod{2}$, \, resp. $ \equiv 1 \pmod{2}$} \}}{x} \overset{?}{>} 0 \,.
\]

Concerning other prime moduli, Ono \cite{Ono3} constructed for any $\ell \geq 5$ an infinite family of essentially arithmetic progressions which each contain infinitely many $n$ such that \(p(n) \equiv 0 \pmod{\ell}\). In further study of 
general congruences like Ramanujan's, for (powers of) the partition function, Kiming and Olsson \cite{KO} showed that a congruence for \( p(n) \) of the form \( p(\ell n + a) \equiv 0 \pmod{\ell} \), \( \ell \geq 5 \) prime, requires~\( 24 a \equiv 1 \pmod{\ell} \).  Consider the sequence \( (\delta_\ell)_\ell \), \( \ell \geq 5 \) prime, where \( \delta_\ell \) is defined by requiring \( 0 < \delta_\ell < \ell \) and \( 24 \delta_\ell \equiv 1 \pmod{\ell} \).  More conceptually \( \delta_\ell \) is defined such that \( p(\delta_\ell) \) is the starting coefficient of~\( P(q) \) under the formal action of the Hecke operator \( U_\ell \) (see \cref{uelldefsec}): that is, \( U_\ell(P(q)) = p(\delta_\ell) q^{\mu_\ell} + \text{higher-order terms} \), with~\( \mu_\ell \) called the \emph{order at infinity} of~$U_\ell P$. The first few terms of this are therefore given by \( \delta_5 = 4 \), \(\delta_7 = 5 \),~\(\delta_{11} = 6 \).  By Kiming and Olssons's work \cite{KO}, this sequence gives the first term in the arithmetic progression \( \ell n + \delta_\ell \) supporting these potential congruences, is hence a natural subsequence to consider for parity and congruence conditions.  For example, Griffin and Ono \cite{GO} showed that for \( \ell \geq 5 \), prime, the sequence \( p(\ell n + \delta_\ell) \) attains infinitely many odd values, the first one of which occurs for \( n < (\ell^2 - 1)/24 \).

Studying values of the partition function \( p(\delta_\ell) \) in the sequence $\delta_\ell$ is challenging for similar reasons as above: the tools to understand negative or half-integral weight modular forms and their images under the shifting-operator \( U_\ell \) are not yet so well developed. However, in the case of positive integral weight modular forms, such as \( \eta^r \) with \( r \) even, {one can study their Fourier coefficients using the theory of Hecke operators and Galois representations, and one can obtain quantitative density results via the Chebotarev Density Theorem.}

We briefly recall the basic Galois-theory method for understanding Fourier coefficients of \emph{eigenforms}. Fix a prime $p$. Starting with a modular eigenform $f = \sum_n a_n(f) q^n$ of some weight $k$ and level $N$ with integer coefficients, a construction of Deligne and Deligne-Serre \cite{Deligne, DeligneSerre} attaches to $f$ a Galois representation 
$\rho_{f}: \Gal(\overline\QQ/\QQ) \to \GL_2(\QQ_p)$
that is unramified at primes $\ell$ not dividing $Np$ and for each such $\ell$ satisfies $\tr \rho_{f}(\Frob_\ell) = a_\ell(f)$, where $\Frob_\ell$ is any Frobenius-at-$\ell$ element in the Galois group. Reducing any integral $\ZZ_p$-lattice in $\rho_{f, p}$ modulo $p$ and semisimplifying if necessary gives us a residual representation depending only on $f$ (and on $p$)
$$\overline\rho_{f}: \Gal(\overline\QQ/\QQ)\to \GL_2(\FF_p)$$
still unramified at $\ell\nmid Np$ and still with $\tr \overline\rho_{f}(\Frob_\ell) \equiv a_\ell(f) \mod{p}$. Since $\overline\rho_f$ has finite image, it factors through a finite Galois group $G_f$. The Chebotarev Density Theorem tells us that Frobenius elements equidistribute in $G_f$, so that, for example, finding the density of primes $\ell$ with $a_\ell(f) \equiv 0 \mod{p}$ is tantamount to counting the proportion of trace-zero matrices in the image of $\overline\rho_f$.

In this paper we use Galois methods developed by Bellaïche to study densities of mod-$p$ modular forms that are not necessarily eigenforms. More precisely, we study the parity of  coefficients of \( \eta^r \) in a subsequence~\( \delta_{\ell,r} \), analogous to the subsequence \( \delta_{\ell}
=
\delta_{\ell,-1} \) for the partition function, to gain some insight into ideas and techniques which might be utilized for studying partition parity later.  
Write \( m_r \coloneqq 24 / \gcd(24, r) \) and $b_r \coloneqq r/\gcd(24,r)$ and consider \( P_r(q) \coloneqq \eta^r(m_r \tau) \), where we have scaled $\tau$ by $m_r$ to make the power series integral. Note that the nonvanishing coefficients of $P_r(q)$ are supported on the powers of $q$ in the arithmetic progression $\{n m_r + b_r\}_n$. 
We therefore set $p_r(n)$ be the coefficient of $q^{n m_r + b_r}$ in~$P_r$. Note that $p_r(n)$, $n m_r + b_r$, and $P_r$, respectively,  specialize to $p(n)$, $n$, and~$P$,
respectively, for $r = -1$.

We define the subsequence~\( \delta_{\ell,r} \) (associated to \( \eta^r \)) as follows.  Set
\[
    \delta_{\ell,r} \coloneqq \frac{1}{m_r} \big( \ell\mu_{\ell,r} - b_r \big),
\]
where \( \mu_{\ell,r} \) is the solution to the equation
\[
    \ell \mu_{\ell,r} \equiv b_r \pmod{m_r} \,, \qquad \text{with} \qquad \frac{b_r}{\ell} \leq \mu_{\ell,r}  < \frac{b_r}{\ell} + m_r  \,.
\]
As above, \( p_r(\delta_{\ell,r}) \) is the first formal coefficient of 
$P_r$
under the action of the \( U_\ell \) operator. Thus~\( p_r(\delta_{\ell,r}) \) tests whether the formal order of infinity matches the actual order at infinity. One then readily sees that~\(\delta_{\ell,r}\) extrapolates $\delta_\ell$ in a natural way. Indeed, $p_{-1}(n) = p(n)$ by construction and the congruence condition on $\mu_{\ell, -1}$ shows that 
$$24\cdot\delta_{\ell, -1} = \ell \mu_{\ell, -1} \equiv 1 \pmod{\ell}.$$
\noindent
Therefore $\delta_{\ell, -1} \equiv 24^{-1} \pmod{\ell}.$ The second condition on $\mu_{\ell, -1}$ ensures that $\delta_{\ell, -1}$ is the minimal positive solution to this congruence in the integers. Thus $\delta_{\ell, -1} = \delta_{\ell}$ for all primes $\ell \neq 2,3.$

In the subsequence \(p_r(\delta_{\ell,r})\) we indeed find biases in the parity for certain $r$.  We consider the associated density
\begin{equation}\label{eqn:density_subsequence}
	\del(r) = \lim_{x \rightarrow \infty} \frac{\#\{\text{$\ell \leq x$ prime} \mid p_r(\delta_{\ell,r}) \equiv 1 \pmod 2\}}{\pi(x)} \,,
\end{equation}
where $\pi(n)$ is the standard prime counting function. Our first result justifies this definition. 

\begin{thma}\label{thm:densityyes}
For every $r \geq 1$, the limit defining $\del(r)$ exists. Moreover, $\del(r)$ is a dyadic rational\footnote{A \emph{dyadic rational} is a fraction of the from $a/{2^k}$ for $a, k \in \ZZ.$}.
\end{thma}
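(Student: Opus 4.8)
The plan is to convert the arithmetic assertion about the leading coefficient of $U_\ell P_r$ into a statement about Frobenius classes in a finite Galois group, and then extract both claims from the Chebotarev Density Theorem together with the $2$-adic nature of the relevant image. First I would reduce $P_r = \eta^r(m_r\tau)$ modulo $2$ to a mod-$2$ modular form $\bar P_r \in \FF_2\llb q\rrb$ and observe, straight from the definitions of $\mu_{\ell,r}$ and $\delta_{\ell,r}$, that $p_r(\delta_{\ell,r})$ reduced modulo $2$ is exactly the leading Fourier coefficient of $U_\ell \bar P_r$: the congruence $\ell\mu_{\ell,r}\equiv b_r \pmod{m_r}$ is arranged precisely so that $\mu_{\ell,r}$ is the formal order at infinity, whence this coefficient equals $1$ if and only if the true order at infinity of $U_\ell \bar P_r$ is maximal. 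Thus $\del(r)$ is the density of primes $\ell$ for which a single, $\ell$-dependent Fourier coefficient of the fixed form $\bar P_r$ is odd.

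The next step is to reinterpret this $\ell$-dependent coefficient as a \emph{prime-indexed} coefficient of a fixed mod-$2$ form. Using the compatibility of $U_\ell$ with the Hecke action on the finite-dimensional space generated by $\bar P_r$ (equivalently, using that $\eta^r$ reduces modulo $2$ to a lacunary CM/theta series supported on a single quadratic progression, as for $\eta^3 \equiv \sum_{j \text{ odd}} q^{j^2/8}$), I would rewrite $p_r(\delta_{\ell,r}) \bmod 2$ in the form $a_\ell(\bar f_r)$ for an explicit fixed mod-$2$ eigenform or combination $\bar f_r$. This is the identification of $\del(r)$ with Bellaïche's density of nonzero prime coefficients announced in the introduction, and it is the technical heart of the argument.

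With this in hand I would attach to $\bar f_r$ the finite Galois group $G=\Gal(L/\QQ)$ cut out by its associated residual (pseudo)representation, so that $a_\ell(\bar f_r) = F(\Frob_\ell)$ for a class function $F\colon G\to \FF_2$ defined on the set of unramified classes (all but finitely many primes, excluding $\ell = 2,3$). The Chebotarev Density Theorem then yields existence of $\del(r)$ and the formula $\del(r) = |F^{-1}(1)|/|G|$, a rational number. For dyadicity the key point is that $F$ factors through a quotient $G'$ of $G$ that is a $2$-group: because $\bar f_r$ is a mod-$2$ form, its attached representation is residually reducible or dihedral, and the odd-order Hecke characters occurring in the CM decomposition cancel modulo $2$, so that $a_\ell(\bar f_r) \bmod 2$ depends only on genus-theoretic data, i.e.\ on the image of $\Frob_\ell$ in a $2$-extension of $\QQ$ (a genus field, together with the pro-$2$ congruence layers coming from the nilpotent part of the Hecke algebra). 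Hence $|G'|$ is a power of $2$ and $\del(r)=|F^{-1}(1)|/|G'|$ is dyadic.

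The main obstacle I anticipate is the identification in the second paragraph: pinning down the fixed form $\bar f_r$ and controlling its attached Galois image uniformly in $r$. In level $1$ this is the Nicolas--Serre/Bellaïche theory, but the levels forced by $m_r \mid 24$ push the analysis to level $9$, where one must verify that no odd-order contribution (in particular no order-$3$ piece coming from the nontrivial characters of $(\ZZ/9)^\times$) survives modulo $2$. Establishing this vanishing is exactly what secures the pro-$2$ image, and hence the dyadicity of $\del(r)$, for \emph{every} $r\geq 1$; the existence of the limit, by contrast, follows formally once the Chebotarev setup is in place.
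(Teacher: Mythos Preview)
Your broad architecture is right---reduce $P_r$ modulo $2$ to a mod-$2$ modular form, interpret the condition via Frobenius in a finite Galois group, and conclude by Chebotarev---but the central step in your second paragraph is not correct as stated, and the paper handles it differently.

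You claim one can rewrite $p_r(\delta_{\ell,r}) \bmod 2$ as $a_\ell(\bar f_r)$ for a \emph{single} fixed form $\bar f_r$. In general there is no such single form: the index $\nu = \ell\mu_{\ell,r}$ whose coefficient you need depends on the residue class of $\ell$ modulo $m_r$, not just on $\ell$ alone. The paper's approach (\cref{dplemma}, \cref{tellf}, \cref{explicit}) is to partition primes by their class $c \in (\ZZ/m_r\ZZ)^\times$; for $\ell \equiv c$ one has $\nu = \ell u_c$ with $u_c$ a fixed integer depending only on $c$, whence $a_\nu(f) = a_\ell(T_{u_c} f)$. This gives
\[
\del(r) = \sum_{c \in (\ZZ/m_r\ZZ)^\times} \bel(T_{u_c}\,\bar P_r),
\]
a finite sum of Bella\"iche densities of \emph{different} forms. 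Each summand is then frobenian and dyadic, so the sum is. Your parenthetical about $\eta^r$ reducing to a lacunary theta series is also misleading: this holds for $\eta$, $\eta^3$, and a few small powers, but most $\bar P_r$ are neither eigenforms nor CM, and the argument must work for arbitrary generalized eigenforms.

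Your dyadicity argument in paragraph~3 is also off the mark. The representation attached to a general mod-$2$ form is not ``residually reducible or dihedral,'' and genus theory plays no role. The correct reason the Galois group is a $2$-group is structural: the pseudorepresentation $t$ on the big Hecke algebra $A$ (respectively $A(9)$) factors through $\Gal(E/\QQ)$ (respectively $\Gal(E_9/\QQ)$), where $E$ (respectively $E_9$) is the maximal \emph{pro-$2$} extension of $\QQ$ unramified outside $2$ (respectively outside $\{2,3\}$). This is Bella\"iche's ``premi\`ere \'etape'' and its level-$9$ analogue in \cref{sec:lvl9}; once you know $t$ factors through a pro-$2$ group, every finite quotient $G_f$ is automatically a $2$-group and $\bel(T_{u_c}\bar P_r) = |H|/|G_f|$ is dyadic. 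Your worry about order-$3$ contributions from $(\ZZ/9\ZZ)^\times$ is thus resolved not by a cancellation argument but by this factorization through $E_9$.
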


Specializing here again to $r=-1$ gives rise to the density related to the partition function
\begin{equation*}
    \del(-1) = \lim_{x \rightarrow \infty} \frac{\#\{\text{$\ell \leq x$ prime} \mid p(\delta_{\ell}) \equiv 1 \pmod 2\}}{\pi(x)}\,.
\end{equation*}

For a visual representation of this sequence in form of a random walk (\cref{fig:deltawalk}), as well as further information and experiments in this direction, see \cref{complements}.

\begin{remark}\label{1} 
More conceptually, consider the following setup: let $f = \sum_{n} a_n(f) q^n$ be a nonzero power series in~$\ZZ\llb q \rrb$, 
supported on on powers of $q$ that are in an arithmetic progression. In other words, suppose there exist $b$ and $m$, with $\gcd(b, m) = 1$, so that $a_b(f) \neq 0$ and $f \in q^b \ZZ\llb q^m \rrb$, so if $n \not\equiv b \cmod{m}$ then~$a_n(f) = 0$. For a prime $\ell$ not dividing $m$ consider $U_\ell\,f = \sum_n a_{\ell n}(f) q^n$, and let $\nu\coloneqq\nu(\ell, m, b)$ be the minimal integer at least $b$ and congruent to $b$ modulo $m$ that is divisible by $\ell$; write $\nu = \ell \mu$ for~$\mu = \mu(\ell, m, b).$ 
Then we can consider $\mu$ to be the  \emph{formal order at $\infty$ of $U_\ell (f)$}, and wonder whether the coefficient $a_\nu(f)$ of $q^{\mu}$ in $U_\ell (f)$ is divisible by a prime~$p$: that is, whether the true order of $U_\ell\,f$ modulo~$p$ is in fact greater than $\mu$. To this end, define 
\begin{align*}\label{gen} \mathcal D_p(f, m)
&\coloneqq  \mbox{density}\Big(\{ \text{$\ell$ prime} \mid [q^{\mu(\ell, m, b)}] (U_\ell f) \not\equiv 0 \pmod{p}\}\Big)\\
&= \mbox{density}\Big(\{ \text{$\ell$ prime} \mid a_{\nu(\ell, m, b)}(f)  \not\equiv 0 \pmod{p}\}\Big).
\end{align*}

Then $\del(r) = \mathcal D_2(P_r, m_r) = \mathcal D_2\big(\eta^r(q^{m_r}), m_r\big).$
In other words, for $P_r = \eta^r(q^{m_r})$ we are computing the density of primes $\ell$ for which the order at infinity of $U_\ell(P_r)$ modulo $2$ is minimal. See \cref{remark7.4} for a sketch of proof that this density is always well defined. 
\end{remark}

In this paper we show that $P_r$ modulo $2$ is always a modular form modulo $2$ of level $9$. If $3 \mid r$, then in fact $P_r$ modulo $2$ is a mod-$2$ modular form of level $1$ (see \cref{linktodelta}),
which have been extensively studied by Serre, Nicolas, and Bellaïche \cite{NS1, NS2, Brep, Bmem}. 
We both use these results and partially extend them to level $9$. Our first result is a complete criterion for the vanishing of $D(r).$ 

\begin{thma}\label{thm:zero-density}
We have $D(r) = 0$ if and only if at least one of the following holds: 
\begin{enumerate}[ref=\thethma(\roman*), topsep = -3pt] 
\item \label[thma]{thm:zero:mult32} $r$ is a divisor or a multiple of $32$, or 
\item\label[thma]{thm:zero:div48}\label[thma]{thm:zero:mult48} $r$ is a divisor or a multiple of $48$.
\end{enumerate}
\end{thma}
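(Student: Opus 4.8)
The strategy is to pin down $\overline{P}_r$ modulo $2$ exactly, reduce $D(r)$ to a coefficient-nonvanishing statement through \cref{1}, isolate the two mechanisms that force vanishing, and then treat the reverse implication with the Galois-theoretic machinery. First, factoring $\eta^r(m_r\tau) = \big(\eta^{\gcd(24,r)}(m_r\tau)\big)^{b_r}$ and applying Euler's pentagonal identity, Jacobi's formula for $\eta^3$, and the mod-$2$ relation $\big(\sum a_i\big)^2 \equiv \sum a_i^2$, the base collapses to a single unary theta series and I expect to find \[ \overline{P}_r = \vartheta^{\,b_r}, \qquad b_r = \frac{r}{\gcd(24,r)}, \] where $\vartheta = \overline{\Delta} \equiv \sum_{k\ge0} q^{(2k+1)^2}$ if $3 \mid r$ (so $\overline{P}_r$ has level $1$) and $\vartheta = \overline{P}_1 \equiv \sum_{\gcd(n,6)=1} q^{n^2}$ if $3 \nmid r$ (so $\overline{P}_r$ is genuinely of level $9$). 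By \cref{1}, $D(r)$ is then the density of primes $\ell$ with $[q^\mu]\,U_\ell\overline{P}_r = a_{\ell\mu}(\overline{P}_r) \neq 0$, where for all large $\ell$ the formal order $\mu = \mu(\ell)$ is the unique solution in $\{1,\dots,m_r\}$ of $\ell\mu \equiv b_r \pmod{m_r}$ and so depends only on $\ell \bmod m_r$.

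Next I would isolate the two sources of vanishing. \emph{(i) Lacunarity.} When the odd part of $b_r$ is $1$, i.e.\ the odd part of $r$ lies in $\{1,3\}$, the form $\overline{P}_r = \vartheta\big(q^{2^{v_2(b_r)}}\big)$ is a unary theta series supported on a density-zero set of scaled squares; writing $\nu = \ell\mu$ with $\mu \le m_r$, the index $\nu$ can lie in this support only if $\ell \mid \mu$, forcing $\ell \le m_r$, so $a_\nu(\overline{P}_r) = 0$ for all large $\ell$ and $D(r) = 0$. \emph{(ii) Two-adic obstruction.} When the odd part of $b_r$ exceeds $1$, the form is no longer lacunary but is supported in $2^{v_2(b_r)}\ZZ$; since $\ell$ is a large odd prime, $a_{\ell\mu}(\overline{P}_r) \neq 0$ requires $2^{v_2(b_r)} \mid \mu$, and the density vanishes exactly when no admissible $\mu \in \{1,\dots,m_r\}$ is so divisible. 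Computing $v_2(b_r) = \max(0, v_2(r)-3)$ and $m_r = 24/\gcd(24,r)$, a direct check shows this happens for $16 \mid r$ when $3 \mid r$ (where $m_r = 1$) but only for $32 \mid r$ when $3 \nmid r$ (where $m_r$ retains the factor $3$, so that $\mu = 2$ survives at $v_2(r) = 4$ and the threshold is pushed up to $2^5$). This retained factor of $3$ in the level-$9$ case is precisely what produces the two distinct generators $32$ and $48$; assembling the loci of (i) and (ii) over the cases $3 \mid r$ and $3 \nmid r$ reproduces exactly the list in the statement, giving the ``if'' direction.

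Finally, the reverse implication, that $D(r) > 0$ whenever $r$ avoids all four conditions, is the crux and where I expect the real difficulty. There the odd part of $b_r$ is at least $3$ and the obstruction does not fire, so $\overline{P}_r$ is, up to a harmless $q \mapsto q^{2^j}$ substitution, an odd power $\ge 3$ of a CM theta series and is not lacunary; I must then produce a positive density of primes $\ell$ with $a_{\ell\mu}(\overline{P}_r) \neq 0$. After Hecke bookkeeping rewriting $a_{\ell\mu}$, for each of the finitely many residues of $\ell \bmod m_r$ fixing $\mu$, as the $\ell$-th coefficient of a fixed form, this reduces to the positivity of Bellaïche's density of nonzero prime coefficients. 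For $3 \mid r$ this is the level-$1$ fact that $\overline{\Delta}^{\,s}$ has positive prime density for every odd $s \ge 3$, due to Nicolas--Serre and Bellaïche \cite{NS1, NS2, Brep, Bmem}. For $3 \nmid r$ there is no ready-made input, and the substantive work is to extend Bellaïche's package to level $9$: construct the residual pseudo-representation carried by the Hecke module generated by $\overline{P}_r$, control the image of the associated mod-$2$ Galois representation, and invoke the Chebotarev Density Theorem to convert large image into a positive density of primes on which the relevant trace is nonzero. This level-$9$ image computation, made uniform over the residues of $\ell \bmod 3$, is the main obstacle.
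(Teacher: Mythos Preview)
Your proposal is correct and follows essentially the same approach as the paper. You identify $\overline{P}_r$ as $\Delta^{b_r}$ or $C^{b_r}$ (the paper's \cref{linktodelta}), establish the ``if'' direction via support considerations, and reduce the converse to Bella\"iche's positive-density result in level~$1$ together with its level-$9$ extension---exactly what the paper does.

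The only noteworthy difference is organizational. You package the ``if'' direction into two uniform mechanisms (lacunarity when the odd part of $b_r$ is $1$; a $2$-adic support obstruction otherwise), which is a clean way to see why $32$ and $48$ emerge as the two generators. The paper instead treats the small divisors and the multiples of $32$, $48$ by separate direct arguments, and for the converse invokes the explicit decomposition $D(r)=\sum_c \bel(T_{u_c}f)$ of \cref{explicit} before appealing to \cref{alldeltapowers} and \cref{allcpowers}. Your ``Hecke bookkeeping'' step is precisely this decomposition, and the level-$9$ positivity input you flag as the main obstacle is the paper's \cref{thm:lvl9:nonzero}, which the paper obtains by citing \cite[Theorem~I]{Bim} rather than reworking the image computation from scratch. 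One small wording issue: in your point~(ii) you say the density ``vanishes exactly when'' no admissible $\mu$ is $2^{v_2(b_r)}$-divisible, but the ``only if'' half of that biconditional already presupposes the Bella\"iche input you defer to the next paragraph; the logic is fine once you separate the two directions as you do.
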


Our next and penultimate result gives upper bounds for $D(r)$. 
\pagebreak
\begin{thma}\label{thm:1/2}
  The following results hold.
  \begin{enumerate}[ref=\thethma(\roman*), topsep = -5pt, itemsep = 2pt, topsep = -3pt, listparindent = 0pt, parsep = 2pt]
  \item\label[thma]{thm:1/2:lt1} We always have $\del(n) < 1$.
  
\item\label[thma]{1/2:1/2} For any $n$ we have $\del(2n) < \frac{1}{2}.$ 
  \item\label[thma]{1/4} For any $n \neq 9, 15, 18, 30$ we have $\del(4n) < \frac{1}{4}.$
  \end{enumerate}
  \end{thma}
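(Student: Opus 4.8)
The plan is to deduce all three bounds from the relation, assumed established earlier, between $\del(r)$ and Bellaïche's density of nonzero prime Fourier coefficients of the mod-$2$ form $\overline P_r \coloneqq P_r \bmod 2$ (of level~$1$ when $3\mid r$ and of level~$9$ in general). By \Cref{thm:zero-density} I may assume $\del(r)>0$, so in particular $\overline P_r \neq 0$. For all but finitely many $\ell$ one has $\mu_{\ell,r} < m_r < \ell$, so $\ell \nmid \mu_{\ell,r}$ and the tested quantity $p_r(\delta_{\ell,r}) = [q^{\mu_{\ell,r}}](U_\ell\,\overline P_r) = a_{\ell\mu_{\ell,r}}(\overline P_r)$ is, modulo $2$, a fixed linear functional applied to $T_\ell\,\overline P_r$. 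As $\ell$ ranges over a fixed residue class modulo $m_r$ the operator $T_\ell$ takes values governed by the Frobenius class of $\ell$ in the finite Galois group $G$ cut out by the pseudo-representation attached to $\overline P_r$; hence $\del(r) = \#S/\#G$ is a Chebotarev density for a subset $S \subseteq G$.

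For \Cref{thm:1/2:lt1} I would show $S \subsetneq G$, i.e.\ that a positive density of primes gives a vanishing coefficient. Since $\overline P_r \neq 0$ the group $G$ is nontrivial, and it always carries Frobenius classes on which the relevant functional vanishes: the nonvanishing of $\tr$ (or of the leading coefficient) is a proper, closed condition on $G$, and for instance complex conjugation is trace-zero modulo~$2$. Thus $\del(r) = \#S/\#G < 1$.

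The structural input for \Cref{1/2:1/2,1/4} is the squaring identity modulo~$2$. Writing $a \coloneqq v_2(r)$ for the $2$-adic valuation of $r$, the congruence $(1-x)^{2^a}\equiv 1-x^{2^a}\pmod 2$ gives $\overline P_r = q^{b_r}\prod_j(1-q^{jm_r})^r \equiv q^{b_r}\prod_j(1-q^{2^a j m_r})^{r/2^a}\pmod 2$, so $\overline P_r$ is supported only on exponents $\equiv b_r \pmod{2^a m_r}$. The definition of $\del(r)$ forces the tested exponent $\nu(\ell)=\ell\mu_{\ell,r}$ only into the coarser progression $\equiv b_r \pmod{m_r}$, so $p_r(\delta_{\ell,r})$ can be odd only for those $\ell$ with $\ell\mu_{\ell,r}\equiv b_r\pmod{2^a m_r}$. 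Since (for large $\ell$) $\mu_{\ell,r}$ is the representative of $b_r\ell^{-1}$ in $[0,m_r)$, this finer congruence depends only on $\ell \bmod 2^a m_r$; partitioning the $2^a$ residues $c+k m_r$ $(0\le k<2^a)$ lying above a fixed admissible class modulo $m_r$, one checks that exactly one is admissible whenever $\mu_{\ell,r}$ is odd. A count of reduced residues together with Dirichlet's theorem then shows the admissible set has density at most $\tfrac12$ when $a\ge 1$ and at most $\tfrac14$ when $a\ge 2$, giving $\del(2n)\le\tfrac12$ and $\del(4n)\le\tfrac14$. The strict inequalities follow by applying the proper-subset argument of \Cref{thm:1/2:lt1} to the core form on the admissible sub-progression, which shows the coefficient is not odd for \emph{all} admissible $\ell$.

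The main obstacle — and the source of the exceptions in \Cref{1/4} — is the parity bookkeeping when $\mu_{\ell,r}$ is even, which hinges on the parities of $m_r$ and $b_r$ (note $m_{2n}$ is odd iff $4\mid n$, and $m_{4n}$ is odd iff $n$ is even), and on whether strictness survives. For $4n\in\{36,60,72,120\}$, i.e.\ $n\in\{9,15,18,30\}$, one has $\gcd(24,4n)\in\{12,24\}$, so $m_{4n}\in\{1,2\}$ and $\mu_{\ell,4n}=1$; the tested coefficient is then a genuine prime coefficient $a_\ell(\overline P_{4n})$ of a low-level dihedral/CM form whose nonvanishing density on the single admissible class modulo $4m_{4n}$ is exactly~$1$, forcing $\del(4n)=\tfrac14$ with equality. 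I would isolate these four cases using the explicit dihedral computations carried out elsewhere in the paper, and establish $\del(4n)<\tfrac14$ for every remaining $n$ by combining the support reduction with the proper-subset argument of \Cref{thm:1/2:lt1}; note that $4\mid n$ already forces $a\ge 4$ and hence density at most $\tfrac18<\tfrac14$ unconditionally, so only the cases $v_2(n)\in\{0,1\}$ require the Galois input.
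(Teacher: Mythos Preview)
Your opening reformulation — that $p_r(\delta_{\ell,r})\equiv a_{\mu_c}(T_\ell\,\overline P_r)$ for $\ell$ in a fixed class $c\bmod m_r$, hence $\del(r)$ is a Chebotarev density — is correct and is exactly the paper's \cref{tellf}/\cref{explicit}. Your argument for \cref{thm:1/2:lt1} via $t(c)=0$ (complex conjugation) is valid and in fact cleaner than the paper's case-by-case verification: for a positive density of $\ell$ one has $T_\ell=0$ in $A_f$, so every coefficient of $T_\ell\,\overline P_r$ vanishes.

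For \cref{1/2:1/2} and \cref{1/4}, however, the support-reduction idea only yields the \emph{non-strict} bounds $\del(2n)\le\tfrac12$ and $\del(4n)\le\tfrac14$; the content of the theorem is the strictness and the identification of the four exceptions, and here your proposal has a genuine gap. Your strict-inequality mechanism is ``apply the proper-subset argument of \cref{thm:1/2:lt1} on the admissible subprogression,'' but that argument used complex conjugation, which lies in the class $-1\bmod 8$ (resp.\ $\bmod 24$) and need not be admissible: e.g.\ for $r=18$ the admissible primes are exactly $\ell\equiv1,3\bmod8$, and for $\ell\equiv3\bmod8$ one has $a_\ell(\Delta^3)=1$ for \emph{every} such prime. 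So there is no uniform ``proper subset'' argument available on the restricted classes. The paper instead uses the explicit decomposition of \cref{explicit}, bounding each summand $\bel(T_u\,\overline P_r)$ by $\tfrac14$ in level~$1$ (resp.\ $\tfrac18$ in level~$9$) and invoking the precise equality characterizations: $\bel(f)=\tfrac14$ for graded $f\in K$ only if $f=\Delta^3,\Delta^5$ (\cref{item:densityleq1/4}), and $\bel(f)=\tfrac18$ for graded $f\in K(9)$ only if $f$ is one of the six abelian $\alpha_i$ (\cref{ab9}). One then checks in each row of the table that the maxima cannot all be attained simultaneously. The four exceptions in \cref{1/4} are thus not a ``dihedral'' phenomenon in the sense of \cref{thm:main-thm}: they are precisely the $r=4n\in\{36,60,72,120\}$ with $\overline P_{4n}=\Delta^3$ or $\Delta^5$, the unique graded level-$1$ forms with maximal Bella\"iche density. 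Without these equality characterizations your argument cannot close.
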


Conceptually, \cref{thm:1/2:lt1} means that the order of infinity of \( U_\ell \) applied to any power of \( \eta \), considered modulo 2, fails to be maximal in set of primes \( \ell \) of positive natural density. \cref{1/2:1/2} shows that, when applied to eta powers of the form $P_{2n}$,
the order of infinity of \( U_\ell \) fails to be maximal for more than half the primes \( \ell \).  Finally, \cref{1/4} shows the failure of the order of infinity of \( U_\ell \) to be maximal is even greater (i.e. for more than three-quarters of the primes \( \ell \)) for eta powers of the form~$P_{4n}$.

In practice we see computationally that $D(8n)$ is always less than or equal to $\frac{1}{8}$, but we expect $\frac{1}{8}$ to be attained infinitely often. Similarly, $\del(16n) \leq \frac{1}{16}$, with equality again expected to be attained infinitely often. (See \cref{expectations} for an explanation of both expectations.) On the other side, we observe computationally (for $r < 10^5$: see graph of $D(r)$ values below) that $D(r)$ is always nonstrictly less than \( \frac{5}{8} \), with \( D(r) = \frac{5}{8} \) attained for \( r = 21, 23, 39, 57, 63, 105 \) (and conjecturally for no other $r$).

\bigskip
\includegraphics[width=0.9\textwidth]{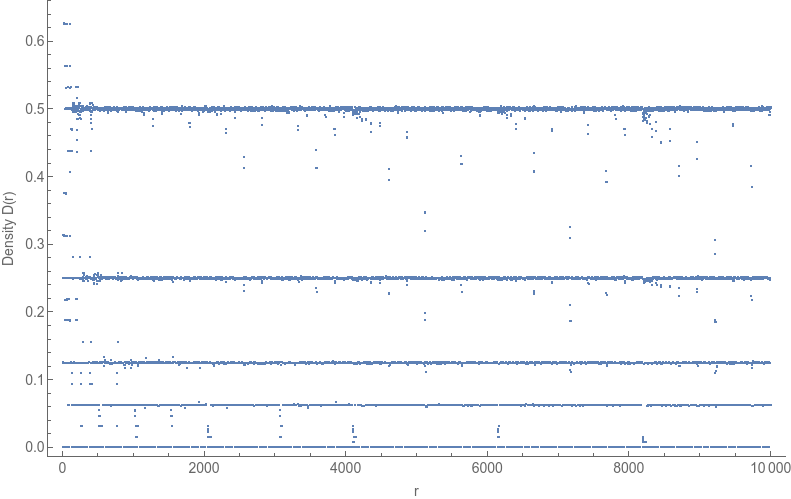}
\bigskip

Our last result concerns powers of eta with unusually small density. For eta powers congruent to mod-$2$ modular forms whose prime Fourier coefficients depend on Frobenius elements in \emph{dihedral} extensions of~$\QQ$, the density $\del(r)$ is typically small, and explicitly computable. 

\pagebreak
\begin{thma}\label{thm:main-thm}
    The following computations of the density \( \del(r) \) hold.
    \begin{itemize}[leftmargin=2em]
     \item {\bf \itshape \( \Q(\sqrt{-2}) \)-dihedral case:}  Consider the sequence \( z_n = \frac{1}{3}(2 \cdot 4^n + 1),\) 
     so $z_n = (3, 11, 43, 171,\ldots)$.  Then \vskip-1\baselineskip
      \begin{align*}
        \del(3 \cdot z_n) = \del(6 \cdot z_n) &= \begin{cases}
            2^{-n}  & \text{if $n\geq2$,}  \\
            \frac{1}{4} & \text{if $n=1;$}
        \end{cases} \\
        \del(12 \cdot z_n) = \del(24 \cdot z_n) & = 2^{-(n+1)} \text{ for $n \geq 1$} \,, \\
        \del(3 \cdot 3z_n) = \del(6 \cdot 3 z_n) &= 3 \cdot 2^{-(n+2)} \text{ for $n \geq 1$} \,, \\
        \del(12 \cdot 3z_n) = \del(24 \cdot 3 z_n) &=  2^{-(n+2)} \text{ for $n \geq 1$} \,,
    \intertext{\item  {\bf \itshape \( \Q(i) \)-dihedral case:}  Consider the sequence \( w_n = 4^n + 1,\) 
    so $w_n=(5, 17, 65, 257,\ldots)$. Then
    }
        \del(3 \cdot w_n) & = \begin{cases}
            3 \cdot 2^{-(n+1)} & \text{if $n \geq 2$,}   \\
            \frac{1}{4} & \text{if $n = 1;$}
            \end{cases} \\
        \del(6 \cdot w_n) = \del(12 \cdot w_n) = \del(24 \cdot w_n) & = 2^{-(n+1)} \text{ for $n \geq 1$}. 
    \end{align*}
    \end{itemize}
\end{thma}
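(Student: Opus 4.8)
The plan is to reduce each line of \cref{thm:main-thm} to a single prime-coefficient density attached to an explicit dihedral mod-$2$ form, and then to feed that density into the bridge between $\del(r)$ and Bellaïche's prime-coefficient density established earlier. The first step is to observe that, modulo $2$, every form in sight is a power of $\Delta$. Using $(1-q^n)^2\equiv 1-q^{2n}\pmod 2$ one checks
$\eta^3(8\tau)\equiv\eta^6(4\tau)\equiv\eta^{12}(2\tau)\equiv\eta^{24}(\tau)\equiv q\prod_{n\geq1}(1-q^{8n})^3=\Delta \bmod 2$,
so that for each line $P_r\equiv\Delta^{s}\bmod 2$ with $s\in\{z_n,\,3z_n,\,w_n,\,3w_n\}$ depending only on the family, while the scaling factor $3,6,12,24$ enters solely through $m_r\in\{8,4,2,1\}$. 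In particular all four lines of a given family share one underlying level-$1$ mod-$2$ form $\Delta^{s}$, which already forces the coincidences $\del(3z_n)=\del(6z_n)$ and $\del(6w_n)=\del(12w_n)=\del(24w_n)$.

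Next I would make the CM/dihedral structure explicit. Since $\Delta\equiv\sum_{k\ \mathrm{odd}}q^{k^2}\bmod 2$, Frobenius gives $\Delta^{2^{a}}\equiv\sum_{k\ \mathrm{odd}}q^{2^{a}k^2}$, and multiplying over the binary digits of $s$ expresses $\Delta^{s}\bmod 2$ as the theta series of a diagonal quadratic form. The two families correspond to the binary patterns of $z_n$ and $w_n$ and to the fields $\Q(\sqrt{-2})$ and $\Q(i)$, whose norm forms $x^2+2y^2$ and $x^2+y^2$ govern the low-order part. This identifies the maximal ideal of the mod-$2$ Hecke algebra through which $\Delta^{s}$ factors as the dihedral one attached to $K$, and attaches to $\Delta^{s}$ a Hecke character $\bar\psi_n$ of $K$ whose $2$-power conductor grows with $n$ in a way dictated by the binary expansion of $s$. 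For a split prime $\ell=\mathfrak l\bar{\mathfrak l}$ the relevant coefficient is then controlled by the $2$-adic valuation of $\psi_n(\mathfrak l)+\psi_n(\bar{\mathfrak l})$, i.e. by the class of $\mathfrak l$ modulo this growing conductor, while inert $\ell$ contribute nothing.

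I would then compute, as a function of $n$, the density of primes $\ell$ at which $a_\ell(\Delta^{s})$ is nonzero modulo $2$, using the two communicated Bellaïche density results together with the Chebotarev Density Theorem applied in the generalized-dihedral extension $H_n/\Q$ cut out by $\bar\psi_n$. Because the good Frobenius elements lie in a single layer of a tower of fields whose degree over $K$ is $\asymp 2^{n}$, this density decays like a fixed dyadic multiple of $2^{-n}$; the prefactors $1,3$ and the shifts $n,n+1,n+2$ in \cref{thm:main-thm} come from the exact order of $\bar\psi_n$ and from whether $s$ is $z_n,w_n$ or $3z_n,3w_n$. Finally I would run the residue-class bookkeeping of the bridge: partitioning primes by $\ell\bmod m_r$, recording the resulting $\mu_{\ell,r}\in\{1,3,5,7\}$, expressing $a_{\ell\mu_{\ell,r}}(\Delta^{s})$ through the prime coefficient $a_\ell$ via the $T_\ell$-action, and intersecting the split condition for $K$ with each residue class. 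Since $\Q(\sqrt{-2})$ splits $\ell\equiv1,3\pmod 8$ whereas $\Q(i)$ splits only $\ell\equiv1\pmod 4$, the dependence on $m_r$ differs between the families, which is exactly why $\del(12z_n)=\tfrac12\del(3z_n)$ while the three values $\del(6w_n)=\del(12w_n)=\del(24w_n)$ coincide.

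The main obstacle is the middle step: pinning down $\bar\psi_n$ and its order precisely and uniformly in $n$, and proving that nonvanishing of $a_\ell(\Delta^{s})\bmod 2$ really is the single Chebotarev condition of density $\asymp 2^{-n}$ rather than a coarser one. Concretely this means matching the binary expansion of $s$ to the $2$-adic growth of the conductor and ruling out unexpected cancellations in the theta-series count modulo $2$; the subsequent Chebotarev count and the residue bookkeeping are then routine. The cases $n=1$, namely $r\in\{9,15,18,30\}$ — precisely the exceptions already visible in \cref{1/4} — fall outside the uniform pattern because $z_1=3$ and $w_1=5$ make $s$ too small for the tower to have reached depth $n$; these I would settle by a direct representation-number computation, recovering the stated value $\tfrac14$.
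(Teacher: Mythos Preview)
Your reduction $P_r\equiv\Delta^s\bmod 2$ and the appeal to the $\del$--$\bel$ bridge are correct, but the claim that sharing the same $\Delta^s$ ``already forces the coincidences $\del(3z_n)=\del(6z_n)$ and $\del(6w_n)=\del(12w_n)=\del(24w_n)$'' is wrong. For odd $s$ the bridge gives $\del(3s)=\bel(\Delta^s)+\bel(T_3\Delta^s)+\bel(T_5\Delta^s)+\bel(T_7\Delta^s)$ whereas $\del(6s)=\bel(\Delta^s)+\bel(T_3\Delta^s)$; these agree only because $T_5\Delta^{z_n}=T_7\Delta^{z_n}=0$, and similarly $\del(6w_n)=\del(12w_n)$ needs $T_3\Delta^{w_n}=0$. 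These annihilation facts are not automatic and your proposal gives no argument for them. (Minor: there is no $3w_n$ family in the statement.)

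The paper obtains both the annihilation and the densities from an input you do not invoke: the Nicolas--Serre identifications $\Delta^{z_n}=m(2^n-1,0)$, $\Delta^{3z_n}=m(2^n,0)$, $\Delta^{w_n}=m(0,2^{n-1})$ in the basis adapted to $(T_3,T_5)$. Then $T_5\,m(a,0)=0$ and $T_3\,m(0,b)=0$ hold by definition of the adapted basis, $T_7$ kills both since $T_7\in xy\,\FF_2\llb x^2,y^2\rrb$, and the surviving images $T_3\,m(a,0)=m(a-1,0)$, $T_5\,m(0,b)=m(0,b-1)$ are again pure basis vectors. Every density is then read off from Bellaïche's closed formula $\bel\big(m(a,0)\big)=\bel\big(m(0,a)\big)=2^{-(u(a)+v(a)+1)}$ by plugging in $a\in\{2^n-1,\,2^n-2,\,2^n,\,2^{n-1},\,2^{n-1}-1\}$; the $n=1$ exceptions fall out of the same binary arithmetic (since $2^0-1=0$) rather than requiring a separate representation-number count. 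Your CM/Hecke-character route could in principle succeed, but the ``main obstacle'' you flag --- pinning down $\bar\psi_n$ uniformly in $n$ and excluding cancellation --- is exactly the content already packaged in Bellaïche's density theorem together with these adapted-basis identifications, so you would essentially be reproving that machinery for each family by hand.
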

The labels \( \Q(\sqrt{-2}) \)-dihedral and \( \Q(i) \)-dihedral refer to cases in the classification of modular forms modulo~2, which we review in \cref{reviewNSB}.

\begin{remark}\label{thm:abelian-thm}
\emph{Abelian} mod-2 modular forms, like dihedral mod-2 modular forms, will also have explicitly computable density. However, it appears likely that one catches only finitely many eta-powers this way: see \cref{linktodelta} and \cite[Remark \S 3.2]{Bmem}. 
Using \cite[Table]{Bmem} we can compute that 
    \begin{align*}
        \del(3 \cdot 7) &= \del(3 \cdot 19) = \del(3 \cdot 21) = \frac{5}{8}  \,, \\
        \del(6 \cdot 7) &= \frac{3}{8} \,, \qquad
        \del(6 \cdot 19) = \del(6 \cdot 21) = \frac{1}{4}  \,, \\
        \del(12 \cdot 7) &= \del(12 \cdot 19) = \del(12 \cdot 21) = \frac{1}{8} \,, \\
        \del(24 \cdot 7) &= \del(24 \cdot 19) = \del(24 \cdot 21) = \frac{1}{8} \,.
    \end{align*}
Similarly, 
one can compute that for $r$ of the form $an$ for $a \in \{1, 2, 4, 8\}$ and $n \in \{5, 7, 13\}$ --- in other words, for $r$ in  $\{5,7,10,13,14,20,26,28,40,52,56,104\}$ --- we have 
\(
D(r) = \frac{1}{8}.
\)
See \cref{proof:rk:abelian} for details.
\end{remark}

{
\begin{table*}[ht]
\small
\renewcommand{\frac}[2]{#1/#2}
\setlength{\tabcolsep}{1em}
\renewcommand{\arraystretch}{1.5}
\begin{varwidth}{0.16\textwidth}
\begin{tabular}{cC|}\label{coloredtable}
$r$ & \del(r)  \\ \hline
\proven 1  &  0  \\
\proven 2  &  0  \\
 \proven 3  &  0  \\
 \proven 4  &  0  \\
\proven 5  &  \frac{1}{8}  \\
 \proven 6  &  0  \\
\proven 7  &  \frac{1}{8}  \\
 \proven 8  &  0  \\
 \proven 9  &  \frac{1}{4}  \\
\proven 10  &  \frac{1}{8}  \\
 11  &  \frac{5}{16}  \\
 \proven 12  &  0  \\
 \proven 13  &  \frac{1}{8}  \\
\proven 14  &  \frac{1}{8}  \\
 \proven 15  &  \frac{1}{4}  \\
 \proven 16  &  0  \\
 17  &  \frac{5}{16}  \\
 \proven 18  &  \frac{1}{4}  \\
 19  &  \frac{5}{16}  \\
\proven 20  &  \frac{1}{8}  \\
\proven 21  &  \frac{5}{8}  \\
 22  &  \frac{5}{16}  \\
 \end{tabular}
 \end{varwidth}%
\begin{varwidth}{0.16\textwidth}
\begin{tabular}{cC|}
$r$ & \del(r)  \\ \hline
 23  &  \frac{5}{8}  \\
 \proven 24  &  0  \\
 25  &  \frac{5}{16}  \\
\proven 26  &  \frac{1}{8}  \\
 \proven 27  &  \frac{3}{8}  \\
\proven 28  &  \frac{1}{8}  \\
 29  &  \frac{5}{16}  \\
\proven 30  &  \frac{1}{4}  \\
 31  &  \frac{5}{16}  \\
 \proven 32  &  0  \\
 \proven 33  &  \frac{1}{4}  \\
 34  &  \frac{3}{16}  \\
 35  &  \frac{7}{32}  \\
 \proven 36  &  \frac{1}{4}  \\
 \modconj 37  &  \frac{1}{2}  \\
 38  &  \frac{3}{16}  \\
 39  &  \frac{5}{8}  \\
 \proven 40  &  \frac{1}{8}  \\
 41  &  \frac{9}{16}  \\
 \proven 42  &  \frac{3}{8}  \\
 43  &  \frac{9}{16}  \\
 44  &  \frac{3}{16}  \\
 \end{tabular}
 \end{varwidth}%
\begin{varwidth}{0.16\textwidth}
\begin{tabular}{cC|}
$r$ & 
\del(r)  \\ \hline
 \modconja 45  &  \frac{1}{2}  \\
 46  &  \frac{5}{16}  \\
 47  &  \frac{17}{32}  \\
 \proven 48  &  0  \\
 49  &  \frac{5}{16}  \\
 50  &  \frac{5}{16}  \\
 \proven 51  &  \frac{3}{8}  \\
\proven 52  &  \frac{1}{8}  \\
\modconj 53  &  \frac{1}{2}  \\
\proven 54  &  \frac{3}{8}  \\
\modconj 55  &  \frac{1}{2}  \\
\proven 56  &  \frac{1}{8}  \\
 \proven 57  &  \frac{5}{8}  \\
 58  &  \frac{5}{16}  \\
 59  &  \frac{9}{16}  \\
 \proven 60  &  \frac{1}{4}  \\
 61  &  \frac{5}{16}  \\
 62  &  \frac{5}{16}  \\
 \proven 63  &  \frac{5}{8}  \\
 \proven 64  &  0  \\
 65  &  \frac{7}{32}  \\
\proven 66  &  \frac{1}{4}  \\
 \end{tabular}
 \end{varwidth}%
\begin{varwidth}{0.16\textwidth}
\begin{tabular}{cC|}
$r$ &\del(r)  \\ \hline
 67  &  \frac{7}{32}  \\
 68  &  \frac{3}{16}  \\
\modconja 69  &  \frac{1}{2}  \\
 70  &  \frac{7}{32}  \\
 71  &  \frac{17}{32}  \\
 \proven 72  &  \frac{1}{4}  \\
 73  &  \frac{7}{16}  \\
 74  &  \frac{3}{16}  \\
\modconja 75  &  \frac{1}{2}  \\
 76  &  \frac{1}{16}  \\
\modconj 77  &  \frac{1}{2}  \\
\modconja 78  &  \frac{1}{4}  \\
\modconj 79  &  \frac{1}{2}  \\
 80  &  \frac{1}{8}  \\
 \modconja 81  &  \frac{1}{2}  \\
\modconj 82  &  \frac{1}{4}  \\
\modconj 83  &  \frac{1}{2}  \\
 \proven 84  &  \frac{1}{8}  \\
\modconj 85  &  \frac{1}{2}  \\
\modconj 86  &  \frac{1}{4}  \\
\modconja 87  &  \frac{1}{2}  \\
 88  &  \frac{1}{16}  \\
 \end{tabular}
 \end{varwidth}%
\begin{varwidth}{0.16\textwidth}
\begin{tabular}{cC|}
$r$ &\del(r)  \\ \hline
\modconj 89  &  \frac{1}{2}  \\
 \modconja 90  &  \frac{1}{4}  \\
\modconj 91  &  \frac{1}{2}  \\
\modconj 92  &  \frac{1}{8}  \\
\modconja 93  &  \frac{1}{2}  \\
\modconj 94  &  \frac{1}{4}  \\
\modconj 95  &  \frac{1}{2}  \\
\proven  96  &  0  \\
 97  &  \frac{7}{32}  \\
 98  &  \frac{3}{16}  \\
 \proven 99  &  \frac{3}{16}  \\
 100  &  \frac{3}{16}  \\
 101  &  \frac{1}{4}  \\
\proven 102  &  \frac{1}{8}  \\
 103  &  \frac{13}{32}  \\
\proven 104  &  \frac{1}{8}  \\
 105  &  \frac{5}{8}  \\
 106  &  \frac{5}{16}  \\
 107  &  \frac{9}{16}  \\
 \proven 108  &  \frac{1}{8}  \\
 109  &  \frac{17}{32}  \\
 110  &  \frac{5}{16}  \\
 \end{tabular}
 \end{varwidth}%
\begin{varwidth}{0.16\textwidth}
\begin{tabular}{cC}
$r$ & \del(r)  \\ \hline
 111  &  \frac{9}{16}  \\
 112  &  \frac{1}{8}  \\
 113  &  \frac{7}{16}  \\
 \proven 114  &  \frac{1}{4}  \\
 115  &  \frac{17}{32}  \\
\modconj 116  &  \frac{1}{8}  \\
\modconja 117  &  \frac{1}{2}  \\
\modconj 118  &  \frac{1}{4}  \\
\modconj 119  &  \frac{1}{2}  \\
 \proven 120  &  \frac{1}{4}  \\
 121  &  \frac{15}{32}  \\
 122  &  \frac{1}{8}  \\
\modconja 123  &  \frac{1}{2}  \\
\modconj 124  &  \frac{1}{8}  \\
\modconj 125  &  \frac{1}{2}  \\
\proven 126  &  \frac{1}{4}  \\
 127  &  \frac{7}{16}  \\
 \proven 128  &  0  \\
  \proven 129  &  \frac{1}{8}  \\
  130  &  \frac{3}{32}  \\
  131  &  \frac{7}{64}  \\
  \proven 132  &  \frac{1}{8}  \\
       \end{tabular}
       \end{varwidth}%
       \medskip
\renewcommand{\tablename}{}
\caption*{
Expected densities \( \del(r) \) 
for \( \eta^r(m_r \tau) \)
for \( 1 \leq r \leq 132 \).
The entries in \fcolorbox{tblproven}{tblproven}{\llap{\phantom{d}}green} are proved. 
}
\label{tbl:results132}
\end{table*}
}

\cref{thm:main-thm} and \cref{thm:abelian-thm} confirm the numerics illustrated in the table on p.~\pageref{coloredtable}. Here we have converted our computations for $D(r)$ (computed with $2 \times 10^6$ coefficients) into \emph{likely} dyadic rationals.

To prove our results, we relate the density \(\del(r)\) to 
$\bel(f)$, the density of primes with nonzero Fourier coefficients for a mod-2 modular form $f$, first studied by Bella\"iche \cite[\S10]{Bim}. We then use Galois-representation-theoretic techniques to compute $\bel(f)$.

We now describe Bellaïche's Galois-theoretic techniques for understanding Fourier coefficients mod $p$ of modular forms that are not necessarily eigenforms. Unlike the situation in characteristic zero, recall that spaces of mod-$p$ modular forms do not have bases of eigenforms, only bases of \emph{generalized} eigenforms. (In our setting of level $9$ modulo $2$, every form is a generalized eigenform for the eigensystem carried by~$\Delta$; the powers of $\Delta$ form such a basis in level $1$.)
By gluing together Galois representations attached by Deligne to $p$-adic eigenforms of level $N$, one can construct a Galois representation\footnote{More precisely, one obtains a Galois \emph{pseudorepresentation} in the sense of Chenevier \cite{Chenevier} --- see \cref{trace-determinant-identity} for the $p=2$ and $N = 1$ case --- but the distinction between the two is a technicality that we will ignore here in the introduction.} unramfied at primes~$\ell$ not dividing~$Np$, on the big Hecke algebra acting on all mod-$p$ modular forms of level $N$ at once, with the action of Frobenius elements at $\ell$ mapping to the operator $T_\ell$ in the Hecke algebra under trace. If $f$ is a particular mod-$p$ modular form of level $N$, not necessarily an eigenform, then passing to a faithful quotient on the finite Hecke algebra that acts on $f$, we see that the prime Fourier coefficients of $f$ are still determined by the action of Hecke on $f$, which is controlled by Frobenius elements in a finite Galois extension $L_f$ of~$\QQ$ unramified outside $Np$.

The upshot is that for a mod-$p$ modular form $f$, the association of a prime $\ell$ to the $\ell^{\rm th}$ Fourier coefficient~$a_\ell(f)$ is  \emph{$L_f$-frobenian} in the following sense of Serre. 

\begin{definition}[Serre \cite{S0}]\label{def:frobenian-set} 
    Let $K/\Q$ be a finite Galois extension of $\QQ$.
    A map $\varphi: \mathcal P \to S$ from the set of all primes $\mathcal P$ to a set $S$ is called \emph{$K$-frobenian} if for all but finitely many $\ell \in \mathcal P$, the evaluation~$\varphi(\ell)$ depends only on the conjugacy class of $\Frob_\ell$ in $\Gal(K/\Q).$
A map $\varphi: \mathcal P \to S$ is \emph{frobenian} if it is $K$-frobenian for some finite extension $K/\QQ$. A set of primes is \emph{frobenian} if its characteristic function is.\end{definition}
The Chebotarev density theorem then finally allows us to translate questions of density for coefficients of~$f$ into an element-counting problem for the finite group~$\Gal(L_f/\QQ)$. In particular, frobenian sets of primes have well-defined Dirichlet --- and hence natural ---  densities  \cite[Prop.~1.5]{S0}.

The eta-powers that we study in \cref{thm:main-thm} correspond modulo~$2$ to modular forms $f$ whose determining Galois number fields $L_f$ are \emph{dihedral} extensions of $\QQ$ unramified outside $2$, so necessarily contain either~$\QQ(i)$ or $\QQ(\sqrt{-2})$ --- this part of the story already appears in \cite{NS2} and is then limned
explicitly in \cite{Bmem}. 
For such a dihedral form $f$, we use precise analysis of Bellaïche and Serre, partially from~\cite{Bmem} and partially unpublished, to identify the dihedral Galois extension in question. The large cyclic subgroups of dihedral groups now make the rest of the computation possible: on elements of a cyclic group, the connection between Galois elements and Fourier coefficients is particularly simple, governed by (modified) Chebyshev polynomials, whose coefficients can be described through binomial coefficients, whose parity in turn may  be realized by a combinatorial computation involving base-2 digits of the  nilpotence index of the mod-$2$ modular form in question. 

Bellaïche passed away in 2022 without publishing his mod-2 density theorems. Part of our current work thus necessitated presenting proofs of some of his unpublished results: see \cref{item:densityleq1/4} and \cref{bellaiche-density}. We naturally take responsibility for any errors we may have introduced into the arguments.

\bigskip

The paper is structured as follows.  In \cref{sec:elementary}, we start by introducing the elementary definitions and set-up to study the density \( \del(r) \) of an \( \eta \)-power.  In \cref{reviewNSB}, we recall work of Nicolas, Serre, and Bellaïche \cite{NS1, NS2, Brep} on mod-2 modular forms of level 1; in \cref{sec:density}, we further recall the later techniques developed by Bella\"iche in published \cite{Bim, Bmem} and unpublished work  on densities of modular forms modulo 2.  In \cref{sec:combinatorial-lemma} we give 
a proof 
of
a technical combinatorial lemma (\cref{combinatorial-lemma}), 
originally proved by Bella\"iche 
using different methods.  In \cref{sec:lvl9} we extend some results of Nicholas, Serre and Bella\"iche on level-1 mod-2 modular forms to level 9.  Finally, in \cref{finalsection} we related Bellaïche's notion of density 
to the density \( \del(r) \) given above, and prove \cref{thm:densityyes,thm:zero-density,thm:1/2,thm:main-thm}.

\subsection*{Acknowledgements}
We thank Ken Ono for suggesting this investigation and for his guidance during the project. We are grateful to J-P.~Serre for sharing a part of his correspondence with Bellaïche with the third author. Furthermore, we would like to thank Pieter Moree, who motivated the last two authors to compute random walks of partition parities on the hour-long tram ride back to Bonn after the first day of the \href{http://www.mi.uni-koeln.de/Bringmann/Konferenz2024.html}{$q$-series conference in Cologne in March 2024}\footnote{International Conference on
Modular Forms and $q$-Series, 11--15 March 2024.
}. We thank Gessica Alecci and Badri Vishal Pandey for many fruitful discussions on these topics. Finally, we thank the Max Planck Institute for Mathematics in Bonn for hospitality and excellent working conditions. 

\section{Reformulating the Problem}
\label{sec:elementary}

In this section, we set up the elementary definitions and notation encompassing our notion of density for $\eta$-powers.  In particular we define the subsequence \( \delta_{\ell,r} \) and explain its conceptual origins.  We formally introduce  the  notion of density \( \del(r) \) of the \( \eta \)-power \( P_r(q) = \eta^r(q^{m_r}) \), which we wish to compute, and indicate a computational approach (\cref{dplemma}). Finally, we establish a mod-$2$ congruence between powers of eta and modular forms of level dividing $9$ (\cref{linktodelta}).  

\medskip

\subsection{Our notion of density for powers of \texorpdfstring{$\eta$}{eta}}\label{uelldefsec} 
We define as before 
$m_r = \frac{24}{\gcd(24,r)}$ and $b_r = \frac{r}{\gcd(24, r)}$ and consider the function 
$$P_r(q) = \eta^r(m_r\tau) = \sum_{n = 0}^{\infty} 
p_r(n)q^{nm_r + b_r} = 
\sum_{n \geq b_r} 
c(n)q^n,$$
where
$$ c(n) \coloneqq \begin{cases}
p_r(m) & 
\text{$n = m_rm +b_r$ for some $m \geq 0$,}\\

0 & \text{otherwise.} 
\end{cases}$$
The rescaling \( m_r \tau \) gives an integral power series; we are interested in studying the parity of the coefficients~\( p_r(n) \) as \( n \) varies in a special subsequence that we now describe.

Let $f= \sum_{n \geq n_0} a_nq^n$ be a nonzero modular form of some level and integral weight $k$ with integer coefficients; here $n_0$ is the order of~$f$ at $\infty$: that is, we insist that $a(n_0) \neq 0$.
We briefly recall the definition of action of the prime Hecke operator $T_\ell$ on $f$: 
\begin{equation}\label{def:hecke}
    T_\ell(f) \coloneqq \sum_{n=0}^{\infty} \left(a(pn) + p^{k-1}a\left(\frac{n}{p}\right)\right)q^n,
\end{equation}
where we set $a(n/p) = 0$ if $p\nmid n.$ In particular, it follows that $a_1(T_{\ell}(f)) = a_{\ell}(f).$ 

Generally \( T_\ell \) can be decomposed into two pieces which effectively capture the two different rescalings present in the Hecke operator. 

We define 
\begin{align}\label{uelldef}\begin{split}
	U_\ell(f) &\coloneqq \ell^{\frac{k}{2}-1} \sum_{b \pmod{\ell}} f\big|_k \begin{pmatrix}
1 & b \\ 0 & \ell
\end{pmatrix}\\
				&= \frac{1}{\ell} \sum_{b \pmod{\ell}} f\left(\frac{\tau + b}{\ell}\right) = \frac{1}{\ell} \sum_{n \geq n_0} a_n \sum_{b \pmod{\ell}} e^{2 \pi i\left(\frac{\tau + b}{\ell}\right)n}\\
				&=\frac{1}{\ell} \sum_{n \geq n_0} a_n e^{\frac{2\pi i n \tau }{\ell}} \underbrace{\sum_{b \pmod{\ell}} e^{\frac{2\pi i b n}{\ell}}}_{\mathclap{\qquad\qquad\text{$ = \ell$, if $ \ell \mid n $, and 0 otherwise}}}\\
				& = \sum_{\substack{n \geq n_0 \\ n = r\ell}} a(n) q^{\frac{n}{\ell}} = \sum_{r \geq \frac{n_0}{\ell}} a(r\ell) q^r
\end{split}
\end{align}
We also have the operator $V_\ell$ acting on $f$ by \[
V_\ell(f) \coloneqq f(\ell \tau) = \sum_{n \geq n_0} a(n) q^{n\ell}
\]
The aforementioned decomposition is then 
\[
T_\ell(f) = U_\ell(f) + \ell^{k-1} V_\ell(f).
\]

Now we apply $U_\ell$ on $P_r(q)$ explicitly:
\begin{align*}
	U_\ell(P_r(q)) &= \sum_{n \geq \frac{rm_r}{24\ell}} c(n\ell)q^n = \sum_{\substack{n \geq \frac{rm_r}{24\ell} \\ n\ell \equiv \frac{rm_r}{24} \pmod {m_r}}} p_r\left(\frac{n\ell-\frac{rm_r}{24}}{m_r}\right) q^{n}\\
						&= p_r\biggl(\,\underbrace{\frac{\mu_{\ell,r}\ell-\frac{rm_r}{24}}{m_r}}_{\coloneqq \delta_{\ell,r}}\,\biggr) q^{\mu_{\ell,r}} +\cdots,
\end{align*}
where $\mu_{\ell,r}$ is defined as the solution of the equation
\[
\ell \mu_{\ell,r} \equiv \frac{rm_r}{24} \pmod{m_r} \quad\mbox{with}\quad \frac{rm_r}{24\ell} \leq \mu_{\ell,r} < \frac{rm_r}{24\ell} + m_r.
\]
We now define the following density.
\begin{definition}\label{def:subdensity}
The density \( \del(r) \) of nonzero values in the sequence \( p_r(\delta_{\ell,r}) \) is defined by
\[
	\del(r) = \lim_{x \rightarrow \infty} \frac{\#\{\text{$\ell \leq x$ prime} \mid p_r(\delta_{\ell,r}) \equiv 1 \pmod 2\}}{\pi(x)} \,,
\]
where \( \pi(n) \) is the prime counting function. We discard the primes $\ell=2,3$ from the definition due to congruence obstructions.
\end{definition}

\vspace{-0.8em}

One of our main results (\cref{thm:densityyes}) is that the limit in \cref{def:subdensity} always exists.
\begin{example}
    We give some examples to clarify these definitions.

    \paragraph{\bf Case $r=18$}
    We directly have $m_{18} = 4$.  If we consider \( \ell = 5 \), we have that \( \mu_{5,18} \) is defined by satisfying $ 5 \mu_{5,18} \equiv 3 \pmod{4} $, with \( \frac{20}{24\cdot 5} \leq \mu_{5,18} < \frac{20}{24\cdot5} + 4 \).  Thus, $\mu_{5,18} = 3$, which gives us
\[
U_5(P_{18}(q)) = \sum_{\substack{n \geq \frac{3}{5} \\ 5n \equiv 3 \pmod{4}}} p_{18}\left(\frac{5n - 3}{4}\right) q^n = p_{18}(3)q^3 + \cdots
\]

    \paragraph{\bf Case $r=120$} We get $m_{120} = 1$.  If we consider $\ell = 7$, then \( \mu_{7,120} \) satisfies $ 7 \mu_{7,120} \equiv 5 \pmod {1}$ and by the second condition we see that $\mu_{7,120} = 1$.  Thus
    $$U_7(P_{120}(q)) = \sum_{n \geq \frac{5}{7}} p_{120}(7n-5)q^n = p_{120}(2)q + \cdots$$
    Likewise, if we look at $\ell = 5$, the conditions on \( \mu_{5,120}  \) require \( 5  \mu_{5,120} \equiv 5 \pmod {1} \), so we find $\mu_{5,120} = 1$. Thus,
    $$U_5(P_{120}(q)) = \sum_{n \geq 1} p_{120}(5n-5)q^n = p_{120}(0)q + \cdots$$

As a last example we consider $\ell = 13$. By the same calculations as before $\mu_{13,120} = 1$ and we find 
$$U_{13}(P_{120}(q)) = \sum_{n \geq 1} p_{120}(13n-5)q^n = p_{120}(8)q + \cdots$$
\end{example}
\subsection{Eta powers as mod-\texorpdfstring{$2$}{2} modular forms}
Eventually we will relate the density $\del(r)$ above to the notion of density $\bel(f)$ of a mod-2 modular form~$f$ as defined by Bellaïche in \cite{Bim}; see \cref{def:beldensity} below for details. We begin by identifying eta powers modulo $2$ with mod-$2$ modular forms of level $1$  and level $9$.  
Recall that $P_r(q) = \eta^r(q^{m_r})$, where $m_r = \frac{24}{\gcd(24, r)}$ and $b_r = \frac{r}{\gcd(24, r)}$. 

Let \[ C = q - 8q^4 + 20q^7 - 70q^{13} + 64q^{16} + 56q^{19} + O(q^{21}) \in S_4\big(\Gamma_0(9)\big) \] be the unique normalized cusp form of weight 4 and level 9.  It is well know that \( C = \eta(3\tau)^8 \): for example \cite[Proposition 3.1.1, proof]{L} shows that \( \eta(3\tau)^8 \) is in  \(S_4\big(\Gamma_0(9)\big) \) as it transforms  appropriately; and  dimension formulas (see \cite[Theorem 1]{CO}, for example) give \( \dim S_4\big(\Gamma_0(9)\big) = 1 \).
In particular, 
\begin{equation}\label{c3delta}
C(q)^3 = \Delta(q^3).
\end{equation}

\begin{lemma}\label{lem:Casdelta} We have $C(q) \equiv \Delta(q) + \Delta(q^9) \pmod{2}.$
\end{lemma}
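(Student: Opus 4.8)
The plan is to prove the congruence purely at the level of $q$-expansions, reducing both sides modulo $2$ to theta series supported on squares. Since only an equality of coefficients mod $2$ is needed (the two sides have different weights, so no weight-$4$ identity is expected), I would work throughout with the honest integral power series $C = q\prod_{n\geq1}(1-q^{3n})^8$, $\Delta(q) = q\prod_{n\geq1}(1-q^n)^{24}$, and $\eta(24\tau) = q\prod_{n\geq1}(1-q^{24n})$, all in $\ZZ\llb q\rrb$, exploiting the Frobenius congruence $g(q)^{2} \equiv g(q^{2}) \pmod 2$, valid for every $g \in \ZZ\llb q\rrb$.

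First I would record the classical parity expansion of $\Delta$. Jacobi's identity $\prod_{n\geq1}(1-q^n)^3 = \sum_{n\geq0}(-1)^n(2n+1)q^{n(n+1)/2}$ reduces mod $2$ to $\prod(1-q^n)^3 \equiv \sum_{n\geq0}q^{n(n+1)/2}$; raising to the eighth power (i.e.\ applying Frobenius three times) and multiplying by $q$ gives $\Delta(q) \equiv \sum_{n\geq0}q^{4n(n+1)+1} = \sum_{n\geq0}q^{(2n+1)^2} \pmod 2$, that is, $\Delta(q) \equiv \sum_{m\text{ odd}}q^{m^2}$. Substituting $q\mapsto q^9$ yields $\Delta(q^9)\equiv\sum_{m\text{ odd}}q^{(3m)^2} = \sum_{m\text{ odd},\,3\mid m}q^{m^2}$, so that after cancelling the common terms over $\FF_2$ I obtain $\Delta(q)+\Delta(q^9)\equiv\sum_{m\text{ odd},\,3\nmid m}q^{m^2} = \sum_{\gcd(m,6)=1}q^{m^2} \pmod 2$.

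It then remains to show $C$ has the same expansion mod $2$. Because $8 = 2^3$, Frobenius applied three times gives directly $C = q\prod(1-q^{3n})^8 \equiv q\prod(1-q^{24n}) = \eta(24\tau) \pmod 2$. Euler's pentagonal number theorem $\prod(1-q^n) = \sum_{k\in\ZZ}(-1)^k q^{k(3k-1)/2}$, applied with $q\mapsto q^{24}$ and combined with the leading factor $q$, yields $\eta(24\tau) \equiv \sum_{k\in\ZZ}q^{12k(3k-1)+1} = \sum_{k\in\ZZ}q^{(6k-1)^2} \pmod 2$; as $k$ runs over $\ZZ$ the integers $6k-1$ run over all integers $\equiv\pm1\pmod6$, so each square $m^2$ with $\gcd(m,6)=1$ is hit exactly once, giving $\eta(24\tau)\equiv\sum_{\gcd(m,6)=1}q^{m^2}$. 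This matches the expression from the previous paragraph, completing the proof.

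I do not anticipate a serious obstacle: the argument is just a chain of Frobenius reductions and square-completions. The one place demanding care is the bookkeeping in the last two steps --- checking that the mod-$2$ cancellation in $\Delta(q)+\Delta(q^9)$ removes precisely the squares of odd multiples of $3$, and that the reindexing $k\mapsto 6k-1$ hits each $m$ with $\gcd(m,6)=1$ exactly once (for each such $m$, precisely one of $\pm m$ is $\equiv -1\pmod6$). As a consistency check, both sides begin $q + O(q^{21})$, since the smallest relevant square beyond $1$ is $5^2=25$, agreeing with the stated expansion $C = q - 8q^4 + 20q^7 - \cdots$, whose higher coefficients are all even.
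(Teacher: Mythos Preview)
Your proof is correct and uses essentially the same ingredients as the paper's: the Frobenius congruence to reduce $C$ to $\eta(24\tau)$, Jacobi's identity for the mod-$2$ expansion of $\Delta$, and the pentagonal number theorem for $\eta(24\tau)$. The only difference is cosmetic bookkeeping at the end: you complete the square $12k(3k-1)+1=(6k-1)^2$ and match directly to $\sum_{\gcd(m,6)=1}q^{m^2}$, whereas the paper instead splits the sum $\sum_{k\geq1}q^{4k^2+4k}$ by residue classes of $k$ modulo $3$ to reach the same conclusion. (One tiny verbal slip: $6k-1$ runs only over integers $\equiv -1\pmod 6$, not $\equiv\pm1$; but your parenthetical already gives the correct reason each square is hit once.)
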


\begin{proof}
 First we note that, by squaring modulo $2$,
        \[
            \eta(3\tau)^8 = q \prod_{n=1}^{\infty}\left(1-q^{3n}\right)^8 \equiv q \prod_{n=1}^{\infty} \left(1-q^{24n}\right) = P_1(q) \pmod{2} \,.
        \]
        Thus, it is enough to show that $P_1(q) \equiv \Delta(q) + \Delta(q^9) \pmod{2} $.
        For this, the Pentagonal Number Theorem \cite[Corollary 1.7]{Andrews} gives the following equality
        \begin{equation}\label{eqn:penta}
            \prod_{n=1}^{\infty} (1-q^n) = 1 + \sum_{k=1}^{\infty} (-1)^k\left(q^{k(3k+1)/2} + q^{k(3k-1)/2}\right) \,,
        \end{equation}
        and the Jacobi triple product identity \cite[Theorem 2.8]{Andrews} leads to the following well-known $q$-expansion for~\( \Delta \) modulo 2:
        \begin{equation}\label{eqn:jacobi_triple}
            \Delta(q) \equiv q\sum_{n=0}^{\infty} q^{4n(n+1)} = \sum_{n=0}^{\infty} q^{(2n+1)^2} \pmod{2}.
        \end{equation}
        Therefore by \eqref{eqn:penta} and \eqref{eqn:jacobi_triple} it is enough to show that
        \[
            \sum_{k=1}^{\infty} q^{4k^2+4k} = \sum_{k=1}^{\infty} q^{36k^2+12k} + \sum_{k=1}^{\infty} q^{36k^2-12k} + \sum_{k=1}^{\infty} q^{36k^2-36k+8}.
        \]
        Restricting the sum on the left hand side to the congruences $k \equiv 0, 1, 2 \pmod{3}$ gives the first, second and third sum, respectively, on the right hand side.  
\end{proof}

From \eqref{eqn:jacobi_triple}, and \cref{lem:Casdelta} we obtain the following $q$-expansions of \( C \) and \( \Delta \) modulo 2:
\begin{alignat}{3}
 \label{deltaeq}   \Delta(q) & \equiv \sum_{\text{$n$ odd}} q^{n^2} && = \,\, q + q^9+ q^{25} + q^{49} + q^{81} + q^{121} + q^{169} + q^{225} + O(q^{289}) \quad \pmod{2}\\
  \label{Ceq}  C(q) & \equiv \sum_{\substack{\text{$n$ odd} \\ 3 \nmid n}}  q^{n^2} && =
  \,\, q + q^{25} + q^{49} + q^{121} + q^{169} + O(q^{289})
  \quad \pmod{2} \,.
\end{alignat}

\begin{prop}\label{linktodelta}
For $r \geq 1$, and with \( b_r = \frac{r}{\gcd(24, r)} \) as before, we have the following congruence of mod-$2$ $q$-expansions $$P_r \equiv \begin{cases} \Delta^{b_r} & \mbox{ if $3 \mid r$, and } \\
C^{b_r} & \mbox{otherwise.}\end{cases} $$
\end{prop}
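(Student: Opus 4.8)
The plan is to reduce all three quantities to explicit infinite products in $q$ and compare them modulo $2$ using the Frobenius relation $(1-x)^2 \equiv 1 - x^2 \pmod 2$, iterated to give $(1-x)^{2^a} \equiv 1 - x^{2^a} \pmod 2$ for every $a \geq 0$. The only genuine content is bookkeeping of $2$-adic valuations, organized by whether $3 \mid r$.

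First I would record the product expansions. From $\eta(\tau) = q^{1/24}\prod_{n\geq 1}(1-q^n)$ we get, for any $d, e \geq 1$,
\[
\eta(d\tau)^e = q^{de/24}\prod_{n\geq 1}(1-q^{dn})^e.
\]
Applying this to $P_r = \eta(m_r\tau)^r$, to $\Delta^{b_r} = \eta(\tau)^{24 b_r}$, and to $C^{b_r} = \eta(3\tau)^{8 b_r}$ (recall $\Delta = \eta^{24}$ and $C = \eta(3\tau)^8$), and using the identity $m_r r = 24 b_r = 3 \cdot 8 b_r$, which holds because $m_r = 24/\gcd(24,r)$ and $b_r = r/\gcd(24,r)$, shows that all three series carry the same prefactor $q^{b_r}$. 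Thus it remains to match the product parts
\[
\prod_{n}(1-q^{m_r n})^{r}, \qquad \prod_n (1-q^n)^{24 b_r}, \qquad \prod_n(1-q^{3n})^{8 b_r}
\]
modulo $2$. For this I would isolate the single reduction step: if $e = 2^a s$ with $s$ odd, then $\prod_n (1-q^{dn})^e \equiv \prod_n(1-q^{2^a d\, n})^s \pmod 2$, so the reduced product is completely determined by the pair $(2^a d,\, s)$.

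It then suffices to check that the relevant pairs coincide. Writing $r = 2^a s$ with $s$ odd and $g = \gcd(24, r)$, I split into two cases. If $3 \nmid r$, then $g = 2^{\min(a,3)}$, so $m_r = 3\cdot 2^{\max(0,3-a)}$ and hence $m_r 2^a = 3\cdot 2^{\max(a,3)}$; on the other hand the odd part of $8 b_r$ equals $s$ and its $2$-adic valuation is $\max(3,a)$, so the pair attached to $\prod(1-q^{3n})^{8b_r}$ is $(3\cdot 2^{\max(a,3)}, s)$ as well, whence $P_r \equiv C^{b_r}$. If $3 \mid r$ then $g = 3\cdot 2^{\min(a,3)}$, so $m_r = 2^{\max(0,3-a)}$, giving $m_r 2^a = 2^{\max(a,3)}$, while $24 b_r$ has odd part $s$ and valuation $\max(3,a)$, so $\prod(1-q^n)^{24 b_r}$ reduces with pair $(2^{\max(a,3)}, s)$, whence $P_r \equiv \Delta^{b_r}$.

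The main (though mild) obstacle is keeping the valuations straight: the capping of the power of $2$ in $\gcd(24,r)$ at $2^3$ (because $24 = 2^3\cdot 3$) forces the two subcases $a \leq 3$ and $a \geq 4$ to be merged into the uniform exponent $\max(a,3)$, and one must verify separately that the odd part of the exponent on each comparison side is exactly the odd part $s$ of $r$ — using that $3 \mid s$ precisely when $3\mid r$ (since $r = 2^a s$), so that dividing by the factor $3$ present in $g$ leaves an odd integer. Once the pairs are seen to match, the two product parts are literally the same power series modulo $2$, and the congruence follows; the specialization $r = 1$ recovers \cref{lem:Casdelta}.
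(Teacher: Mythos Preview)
Your proof is correct and follows essentially the same approach as the paper: both arguments reduce to matching the infinite products $\prod_n(1-q^{dn})^e$ modulo $2$ via the Frobenius relation $(1-x)^{2^a}\equiv 1-x^{2^a}$, after noting that the $q^{b_r}$ prefactors already agree because $m_r r = 24 b_r$. Your bookkeeping through the invariant pair $(2^{v_2(e)}d,\ \text{odd part of }e)$ is a slightly more systematic packaging of the same computation the paper does directly case by case; one small remark is that the specialization $r=1$ gives $P_1\equiv C$, which is only the first step of \cref{lem:Casdelta} rather than the full statement.
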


    \begin{proof}
        Recall the notation \( m_r = \frac{24}{\gcd(24, r)} \) and \( b_r = \frac{r}{\gcd(24, r)} \). Then as before \( P_r = q^{b_r} \prod_{n=1}^\infty (1 - q^{m_r n} )^r \). 

        First, suppose \( 3 \mid r \), so \( r = 3s \), giving \( b_r = \frac{s}{\gcd(8,s)} \) and \( m_r = \frac{8}{\gcd(8,s)} \), so that in particular \( m_r \) is a power of 2.  Then
        \begin{align*}
            P_r  = q^{{s}/{\gcd(s, 8)}} \prod_{n=1}^\infty \big(1 - q^{{8n}/{\gcd(s,8)}} \big)^{3s}  
             = q^{{s}/{\gcd(s, 8)}} \prod_{n=1}^\infty \big(1 - q^{n}\big) ^{{8 \cdot 3s}/{\gcd(s,8)}} 
             = \Delta^{{s}/{\gcd(s, 8)}} 
             = \Delta^{b_r} \,.
        \end{align*}

        Otherwise, suppose \( 3 \nmid r \).  Then \( b_r = \frac{r}{\gcd(r, 8)} \), and \( m_r = \frac{24}{\gcd(r, 8)} \), so 
        \begin{equation*}
        P_r = q^{{r}/{\gcd(r, 8)}} \prod_{n=1}^\infty \big(1 - q^{{3n \cdot 8}/{\gcd(r, 8)}} \big)^r 
         = q^{{r}/{\gcd(r, 8)}} \prod_{n=1}^\infty (1 - q^{3n})^{{8r}/{\gcd(r,8)}} 
         =\big(\eta(3\tau)^8\big)^{{r}/{\gcd(r,8)}} 
        =C^{b_r}. \qedhere
        \end{equation*}
\end{proof}

\begin{lemma}\label{dplemma}
Suppose $f$ is a $q$-series supported on arithmetic progression $b$ modulo $m$. For $c$ with $\gcd(c, m) = 1$, let $1\leq u_c< m$ be the least positive integer congruent to $b c^{-1}$ modulo $m$. Then 
$$\mathcal D_p(f, m) = \sum_{\gcd(c, m) = 1} 
\operatorname{density}\big(\{\ell \equiv c \pmod{m}: a_{u_c \ell}(f) \not\equiv 0 \pmod{p}\}\big).
$$
Furthermore, if $f$ is a modular form, then 
$$\mathcal D_p(f, m) = 
\sum_{\gcd(c, m) = 1} 
\operatorname{density}\big(\{\ell \equiv c \pmod{m}: a_\ell(T_{u_c} f)  \not\equiv 0 \pmod{p}\}\big).
$$

\end{lemma}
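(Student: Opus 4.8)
The plan is to reduce both identities to one elementary computation of the index $\nu(\ell,m,b)$ coming from the definition of $\mathcal D_p(f,m)$, after which the passage to densities is merely finite additivity. Recall that, writing $\nu = \nu(\ell,m,b)$ for the least integer $\geq b$ that is congruent to $b$ modulo $m$ and divisible by $\ell$, one has $\mathcal D_p(f,m) = \operatorname{density}\big(\{\ell : a_{\nu}(f) \not\equiv 0 \pmod p\}\big)$. Since $a_n(f) = 0$ for $n \not\equiv b \pmod m$, the only primes that can contribute are those with $\ell \nmid m$; I would therefore partition these primes according to their residue class $c$ modulo $m$, ranging over $\gcd(c,m) = 1$.

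First I would pin down $\nu$ on each such class. Fix $c$ with $\gcd(c,m) = 1$ and let $\ell \equiv c \pmod m$ be prime. Writing $\nu = \ell\mu$, the congruence $\ell\mu \equiv b \pmod m$ is equivalent to $\mu \equiv b\ell^{-1} \equiv bc^{-1} \equiv u_c \pmod m$, using $\ell^{-1} \equiv c^{-1} \pmod m$. Thus $\mu$ is the least integer congruent to $u_c$ modulo $m$ with $\ell\mu \geq b$. For every prime $\ell > b$ the inequality $\ell\mu \geq b$ holds as soon as $\mu \geq 1$, and since $1 \leq u_c < m$ the next smaller admissible residue $u_c - m$ is negative; hence the minimum is attained at $\mu = u_c$, giving $\nu = u_c\ell$. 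Consequently the sets $\{\ell \equiv c : a_{\nu}(f) \not\equiv 0 \pmod p\}$ and $\{\ell \equiv c : a_{u_c\ell}(f) \not\equiv 0 \pmod p\}$ differ only among the finitely many primes $\ell \leq b$, so they have equal density. Summing over $c$ and using that the residue classes are disjoint yields the first formula.

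For the refinement when $f$ is a modular form, say of level $N$, I would invoke the good-prime Hecke relation. For a prime $\ell \nmid N$ with $\ell \nmid u_c$, the identities $a_1(T_n g) = a_n(g)$ and $T_\ell T_{u_c} = T_{\ell u_c}$ combine to give
\[
a_\ell(T_{u_c} f) = a_1\big(T_\ell T_{u_c} f\big) = a_1\big(T_{\ell u_c} f\big) = a_{u_c\ell}(f).
\]
Because $u_c < m$, the condition $\ell \nmid u_c$ holds for all $\ell > m$, so for all but finitely many $\ell$ the quantities $a_{u_c\ell}(f)$ and $a_\ell(T_{u_c}f)$ coincide. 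Substituting this equality into the first formula, class by class, gives the second.

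The only delicate points—and hence the (mild) main obstacle—are bookkeeping. One must check that excluding the finitely many exceptional primes ($\ell \leq b$, $\ell \mid m$, $\ell \mid u_c$, or $\ell \mid N$) changes no density, and that the residue-class decomposition is additive. The latter requires the individual densities on the right to exist; this is automatic in all our applications, where $f$ is congruent modulo $p$ to a modular form and hence has frobenian prime coefficients in the sense of \cref{def:frobenian-set}, and frobenian sets have densities that add over disjoint unions. Beyond this congruence analysis and the coprime Hecke relation, no further input is required.
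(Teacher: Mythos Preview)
Your argument is correct and follows essentially the same route as the paper: identify $\nu(\ell,m,b)=u_c\ell$ for all but finitely many primes in each residue class $c$, then invoke the Hecke identity $a_{u_c\ell}(f)=a_\ell(T_{u_c}f)$ for the modular-form refinement. Your write-up is simply more detailed, in particular spelling out the coprime Hecke relation $T_\ell T_{u_c}=T_{\ell u_c}$ and the bookkeeping about exceptional primes and additivity of densities, which the paper leaves implicit.
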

\begin{proof}
We are free to ignore the finitely many primes $\ell < b$ and primes $\ell$ dividing $m$. For each other prime $\ell \equiv c \pmod{m}$, the quantity $\ell u_c$ is the smallest multiple of $\ell$ in the congruence class $b$ modulo $m$. Therefore in the notation of \cref{1} $\ell u_c = \nu(\ell, m, b).$ If $f$ is in a  space of modular form, then $T_{u_c}$ is a meaningful operator and $a_{u_c\ell}(f) = a_\ell(T_{u_c}f).$
\end{proof}

\section{Review of modular forms modulo 2 of level \texorpdfstring{$1$}{1}} \label{reviewNSB}

In this section, we review the work of Nicolas, Serre, and Bellaïche \cite{NS1, NS2, Bmem, Brep, Sletter}, and the techniques by which one can study the coefficients of a mod-2 modular form through representations of a Galois group. With the exception of \cref{invariance-of-basis-under-pinning} (which was certainly known to Bellaïche), everything in this section is a review and appears in the published literature.  

With the exception of \cref{lem:structureLevel1}, we shall henceforth write
\begin{align*}
    \Delta(q) & \coloneqq q + q^9+ q^{25} + q^{49} + q^{81} + q^{121} + q^{169} + q^{225} + O(q^{289}) \\
    C(q) & \coloneqq q + q^{25} + q^{49}  + q^{121} + q^{169} +  O(q^{289}) \,,
\end{align*}
to mean the mod-2 reductions of the previously considered modular forms \( \Delta \) and \( C \).

\subsection{The space of mod-\texorpdfstring{$2$}{2} modular forms and their Hecke algebra}
We begin with recalling the work of Nicolas and Serre on modular forms of level 1 modulo $2$ and the big completed Hecke algebra acting on this space.

\subsubsection{The space of forms}  We first establish the structure of modular forms of level 1 modulo 2.
\begin{lemma}\label{lem:structureLevel1}
	Any integral modular form  of level $1$ is congruent modulo $2$ to a polynomial in $\Delta$.
\end{lemma}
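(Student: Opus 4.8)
The plan is to induct on the (necessarily even) weight $k$, leveraging the standard structure theory of level-$1$ modular forms together with the two congruences $E_4 \equiv E_6 \equiv 1 \pmod 2$. The guiding observation is that these congruences collapse every Eisenstein factor modulo $2$, so that only the $\Delta$-part of a form survives.

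First I would record the basic ingredients. Since $E_4 = 1 + 240\sum_{n\geq1}\sigma_3(n)q^n$ and $E_6 = 1 - 504\sum_{n\geq1}\sigma_5(n)q^n$, and both $240$ and $504$ are even, we have $E_4 \equiv E_6 \equiv 1 \pmod 2$; hence every monomial $E_4^a E_6^b$ reduces to $1$ modulo~$2$. Next, for every even $k \geq 4$ the equation $4a + 6b = k$ is solvable in nonnegative integers, so there is an integral form $g_k \coloneqq E_4^a E_6^b$ of weight $k$ with constant term $1$ and $g_k \equiv 1 \pmod 2$. I also use that forms of odd weight and level $1$ vanish (as $-I \in \SL_2(\Z)$), so only even weights occur.

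For the inductive step, let $f = \sum_{n\geq 0} a_n q^n$ be an integral form of even weight $k$, the case $k = 0$ (constants, which reduce to $0$ or $1 = \Delta^0$) being immediate. Because $f$ is integral, its constant term $a_0$ lies in $\Z$, so $f - a_0 g_k$ is an integral weight-$k$ form with vanishing constant term, i.e.\ a cusp form. Now $S_k = \Delta \cdot M_{k-12}$, and since $\Delta = q + O(q^2)$ has leading coefficient $1$, dividing the integral cusp form $f - a_0 g_k$ by the ``monic'' series $\Delta$ produces a power series with integer coefficients, hence an integral modular form $h$ of weight $k-12$ (when $k < 12$ one simply has $S_k = 0$, so $f = a_0 g_k$). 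By the induction hypothesis $h \equiv P(\Delta) \pmod 2$ for some polynomial $P$, and combining with $g_k \equiv 1$ yields $f \equiv a_0 + \Delta\, P(\Delta) \pmod 2$, again a polynomial in $\Delta$.

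The only step requiring genuine care is the integrality of the quotient $h = (f - a_0 g_k)/\Delta$: this is exactly where the normalization $\Delta = q\prod_{n\geq1}(1-q^n)^{24}$ with unit leading coefficient is essential, guaranteeing that division by $\Delta$ preserves $\Z$-coefficients rather than merely $\Q$-coefficients. I expect no other obstacle; the remaining inputs ($M_* = \C[E_4,E_6]$, $S_k = \Delta M_{k-12}$, and the representability of even $k \geq 4$ as $4a+6b$) are entirely standard, and the argument is just a matter of keeping the bookkeeping integral throughout the induction.
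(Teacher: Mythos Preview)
Your argument is correct and is essentially the same approach as the paper's: the paper simply cites the Swinnerton--Dyer result that a $\ZZ$-basis for level-$1$ forms is $\ZZ[\Delta,E_4]\oplus E_6\ZZ[\Delta,E_4]$ and then reduces $E_4,E_6$ to $1$ modulo $2$, whereas you spell out the standard weight induction (subtract the constant via $E_4^aE_6^b$, divide the resulting cusp form by $\Delta$) that underlies that basis statement. Both routes rest on exactly the same two ingredients, $E_4\equiv E_6\equiv 1\pmod 2$ and the integrality of $(\text{cusp form})/\Delta$.
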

\begin{proof}  The proof is given in \cite{SWD}, and follows from the fact that a \( \ZZ \)-basis for modular forms of level 1 for \( \SL_2(\Z) \) is given by $\ZZ[\Delta, E_4] \oplus E_6 \ZZ[\Delta, E_4].$
\end{proof}

Since squaring commutes with the Hecke action, we are free to work with a complement of the subspace of squares; the natural choice is the Hecke-invariant kernel of the $U_2$ operator. 
With this in mind, let $K$ be the subspace of $\mathbb{F}_2\llb q\rrb$ spanned by the odd powers of $\Delta$, which we identify with its mod-$2$ $q$-expansion:
$$K \coloneqq \FF_2\langle \Delta, \Delta^3, \Delta^5, \dots \rangle \,.$$ 

\subsubsection{The grading on the space of forms}
We also define the spaces $K^i$ for $i = 1,3,5,7$ as
\begin{equation} \label{eqn:Kispan}
K^i \coloneqq \FF_2\big\langle \Delta^k \mid k \equiv i \pmod{8} \big\rangle \,.
\end{equation}
Then clearly
\begin{equation}\label{kgrade} K = K^1 \oplus K^3 \oplus K^5 \oplus K^7\,.\end{equation}
We note \cite[(1)]{NS1} that the coefficients of \( \Delta^i \) are supported on \( q^n \) such that \( n \equiv i \pmod{8} \).  This follows from the fact that this is clearly true for $\Delta$, as $\Delta = \sum_{\text{$n$ odd}} q^{n^2}$. Therefore  \eqref{kgrade} defines a \( (\Z/8\Z)^\times \)-grading on \( K \), with $K^i = \{f \in K: a_n(f) \neq 0 \implies n \equiv i \mod{8}\}.$

\subsubsection{The construction of the Hecke algebra}
We now construct the Hecke algebra acting on $K$. Let $K_n$ be the subspace of $K$ of degree up to $n$:
$$K_n \coloneqq \FF_2 \big\langle \Delta, \Delta^3, \dots, \Delta^{2\lfloor{n/2}\rfloor} \big\rangle.$$
Let $A_n$ be the (finite, commutative) $\FF_2$-subalgebra of $\End_{\FF_2}(K_n)$ generated by action of Hecke operators~$T_\ell$ for odd primes~$\ell$.
Since \( K_n \subseteq K_{n+1} \), and the Hecke action is compatible, we obtain surjective maps~\( A_{n+1} \twoheadrightarrow A_{n} \) by restriction \cite[\S4]{NS2}.
Thus the $A_n$ form a projective system; consider the inverse limit
$$ A \coloneqq \varprojlim A_n.$$
Then $A$ is a pro-$2$ $\FF_2$-algebra, the \emph{shallow} or \emph{anemic} (that is, no $U_2$) big Hecke algebra acting on modular forms modulo $2$ of level $1$. Equivalently, $A$ is the closure in $\End_{\FF_2}(K)$ of the algebra generated by the action of the $T_\ell$, under the compact-open topology on $\End_{\FF_2}(K)$ induced by the discrete topology on $K$.

\subsubsection{Duality}\label{subsec:duality}
By \cite[Théorème~5.1]{NS2}, 
the  classical pairing 
\begin{equation}\label{eqn:duality}
\begin{aligned}[t]
    \langle \text{--}, \text{--} \rangle \colon A \times K &\to \mathbb{F}_2 \\
    (T,f) &\mapsto a_1(T f) \,.
\end{aligned}
\end{equation}
is a continuously perfect pairing, inducing $A$-equivariant isomorphisms $$A \simeq K^\vee \coloneqq \Hom(K, \FF_2) \mbox{\qquad and \qquad} K \simeq A^{\vee, {\rm cont}} \coloneqq \Hom_{\rm cont}(A, \FF_2).$$ 

For $f \in K$, write $f^\ast$ for the continuous linear form on $A$ induced by $f$. Namely, $f^\ast(T) = \langle T, f \rangle = a_1(Tf).$

\subsubsection{The structure of the Hecke algebra}
In fact, $A$ is a \emph{local} $\FF_2$-algebra, as was already known to Tate in the 1970s \cite{Tmod2}. This is because the maximal ideal of $A$ are in correspondence with mod-$2$ Hecke eigensystems, and a mod-$2$ Hecke eigensystem other than that of $\Delta$ would force the existence of an $S_3$-extension of $\QQ$ unramified outside $2$. Since no such extensions exist, every form in $K$ is a generalized eigenform for the $\Delta$-eigensystem, and $A$ is local. In particular every $T_\ell$ is in the maximal ideal $\mm$ of $A$, and acts locally nilpotently on $K$.

Nicolas and Serre show further that $A$ a regular local $\FF_2$-algebra of dimension $2$.
\begin{theorem}[Nicolas-Serre  {\cite[Th\'eor\`eme 4.1 and \S4]{NS2}}]\label{nsthm}
    The homomorphism $\FF_2 \llb x, y \rrb \to A$ mapping $x \mapsto T_3$ and $y \mapsto T_5$ is an isomorphism of complete local noetherian $\FF_2$-algebras.
\end{theorem}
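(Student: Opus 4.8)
The plan is to realize $A$ as a quotient of $\FF_2\llb x,y\rrb$ by a continuous surjection and then force that surjection to be an isomorphism by matching Hilbert functions, organizing everything through the perfect duality $A \cong K^\vee$ set up in \S\ref{subsec:duality}. Throughout I use, as recalled above, that $A$ is local with maximal ideal $\mm$ topologically generated by the $T_\ell$, each of which acts topologically nilpotently on $K$.

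First I would construct the surjection. By topological Nakayama for the complete local ring $A$, it suffices to show that the images of $T_3$ and $T_5$ span the cotangent space $\mm/\mm^2$; then $T_3, T_5$ generate $\mm$, the closed subalgebra they generate is all of $A$, and sending $x \mapsto T_3$, $y \mapsto T_5$ defines a continuous surjection $\phi\colon \FF_2\llb x,y\rrb \twoheadrightarrow A$. Spanning $\mm/\mm^2$ amounts to checking that every $T_\ell$ is congruent to a polynomial in $T_3$ and $T_5$ modulo $\mm^2$, i.e.\ that no further Hecke operator contributes a new first-order generator.

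Next I would prove injectivity by a Hilbert-function computation. Dualizing the $\mm$-adic filtration, the perfect pairing $\langle T, f\rangle = a_1(Tf)$ identifies the annihilator $(\mm^n)^\perp$ with $K[n] \coloneqq \{f \in K : \mm^n f = 0\}$ and hence gives $A/\mm^n \cong K[n]^\vee$, so that $\dim_{\FF_2} A/\mm^n = \dim_{\FF_2} K[n]$. Since $\phi$ carries $(x,y)^n$ onto $\mm^n$, it induces surjections $\FF_2\llb x,y\rrb/(x,y)^n \twoheadrightarrow A/\mm^n$, whence $\dim_{\FF_2} K[n] \le \binom{n+1}{2}$ for all $n$. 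The goal is then the reverse inequality, i.e.\ the exact count
\[
\dim_{\FF_2} K[n] = \binom{n+1}{2} \qquad \text{for all } n \ge 1 .
\]
Equality for every $n$ makes each induced surjection an isomorphism, so $\ker\phi \subseteq \bigcap_n (x,y)^n = 0$ by Krull's intersection theorem, and $\phi$ is an isomorphism. As a base-case sanity check, $K[1] = \bigcap_\ell \ker T_\ell$ contains $\Delta$ because $\tau(\ell) \equiv 1 + \ell^{11} \equiv 0 \pmod 2$ for odd $\ell$, consistent with $\binom{2}{2} = 1$.

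The hard part --- and the genuine content of Nicolas--Serre --- is the exact computation of $\dim_{\FF_2} K[n]$, together with the first-order spanning statement for $\mm/\mm^2$. Both rest on an explicit, recursive description modulo $2$ of how $T_3$ and $T_5$ act on the basis $\{\Delta^{2k+1}\}$ of $K$, extracted from the theta-expansion $\Delta \equiv \sum_{n \text{ odd}} q^{n^2}$ via pentagonal/Jacobi-triple-product manipulations; the resulting coefficients are binomial, so their parities --- and hence the whole nilpotence filtration --- are governed by the base-$2$ digits of the exponents. The $(\Z/8\Z)^\times$-grading $K = K^1 \oplus K^3 \oplus K^5 \oplus K^7$ of \eqref{kgrade} serves as essential bookkeeping here, since $T_3$ and $T_5$ act with definite shifts on the grading. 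Assembling these recursions into the closed-form count $\dim_{\FF_2} K[n] = \binom{n+1}{2}$ is the delicate combinatorial step on which the entire argument turns.
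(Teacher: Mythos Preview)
The paper does not give its own proof of this theorem; it is stated with a citation to \cite[Th\'eor\`eme~4.1 and \S4]{NS2} and then used as a black box. Your outline is a faithful high-level reconstruction of the Nicolas--Serre strategy: produce a map $\FF_2\llb x,y\rrb \to A$, use the duality $A \cong K^\vee$ to translate the question into a Hilbert-function computation $\dim_{\FF_2} K[n] = \binom{n+1}{2}$, and identify the latter (together with the cotangent-space spanning statement) as the hard combinatorial core carried out in \cite{NS1} via explicit recursions for $T_3$ and $T_5$ on powers of $\Delta$. You correctly flag that this last step is where the real content lives and do not pretend to reprove it.

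One small organizational remark: in the actual Nicolas--Serre argument the two ``hard parts'' you separate --- spanning $\mm/\mm^2$ and the exact Hilbert function --- are not really independent. The nilpotence computation of \cite{NS1} gives directly that $\dim K[(T_3,T_5)^n] = \binom{n+1}{2}$ for the ideal $I = (T_3,T_5)$; since $I \subseteq \mm$ one has $K[\mm^n] \subseteq K[I^n]$, so duality forces $\dim \mm/\mm^2 \le 2$, and then the elementary checks $T_3\Delta^3 = \Delta$, $T_5\Delta^3 = 0$, $T_5\Delta^5 = \Delta$ show $T_3, T_5$ are independent in $\mm/\mm^2$. So surjectivity really falls out of the same computation rather than requiring a separate first-order analysis of all $T_\ell$. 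This is a packaging difference rather than a gap; your outline is correct.
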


Of course there are many other choices of generators of $A$. However, the grading on $A$ provides additional structure, which we describe below.

\subsubsection{The grading on the Hecke algebra}

From the definition of the Hecke operator $T_\ell$ as in \eqref{def:hecke} and the description of $K^i$ from \eqref{eqn:Kispan}, it is easy to see that  \( T_\ell(K^i) \subset K^{\ell i} \).  This induces a grading $(\ZZ/8\ZZ)^\times$-grading on $A$ making $K$ into a graded $A$-module.

Identifying $x$ with $T_3$ and $y$ with $T_5$ as in \cref{nsthm}, define the subalgebra
    \( A^1 \coloneqq \mathbb{F}_2\llb x^2, y^2 \rrb \) and the $A^1$-submodules \( A^3 \coloneqq x A^1 \), \( A^5 \coloneqq y A^1 \), and \( A^7 \coloneqq xy A^1 \).  It is clear that 
    \[
       A = A^1 \oplus A^3 \oplus A^5 \oplus A^7 \,,
    \]
     is a $(\ZZ/8\ZZ)^\times$-graded algebra, and $K$ is a graded $A$-module, meaning: 
  \[
        A^i A^j \subset A^{ij} \,, A^i K^j \subset K^{ij} \,,
     \]
     where the indices are always taken modulo 8.

\subsubsection{Graded parameters on $A$} The grading on $A$ invites us to focus on parameters that respect the grading. These are called a \emph{good system of generators} in \cite{Bmem}, but we use the term \emph{graded parameters}.
\begin{definition}[{\cite[\S2.7, Definition 1]{Bmem}}]\label{def:gradedparams}
    Let \( X \) and \( Y \) be be two elements of \( A \).  We call \( (X, Y) \) a  \emph{pair of graded parameters} if there exists two invertible elements \( a, b \in A \), such that \( X = xa^2 \), and \( Y = yb^2 \).
\end{definition}

A pair $(X, Y)$ of graded parameters topologically generated $A$: $A = \FF_2\llb X, Y \rrb$. Furthermore, they respect the grading, in that $A^1 = \FF_2 \llb X^2, Y^2 \rrb$, $A^3 = X A^1$, $A^5 = Y A^1$, and $A^7 = XY A^1$. 

Of course  \( (X,Y) = (x, y) = (T_3, T_5) \) is a pair of graded parameters, although one can in fact also take \( (X,Y) = (T_p, T_q) \) for any primes \( p \equiv 3 \pmod{8} \) and \( q \equiv 5 \pmod{8} \) \cite[\S7, {before Remarques}]{NS2}.  
For further generalization relying on the image of the Galois pseudorepresentations carried by $A$, see \cref{sec:structureofgaloisgp} below.

\subsubsection{Basis for the space of forms adapted to a pair of graded parameters}
As a consequence of the duality in \eqref{eqn:duality}, any pair of graded parameters $(X, Y)$ determines a \emph{graded basis adapted to $(X, Y)$}. 

\begin{definition}[ {\cite[\S2.7, Definition 2]{Bmem}}]\label{def:adaptedbasis}
  Given graded parameters \( (X,Y) \), a \emph{graded basis adapted to $(X, Y)$} 
  is a basis \( \{ m_{(X,Y)}(a,b) \}_{a\geq0,b \geq 0} \) of \( K \) satisfying
  \begin{enumerate}[topsep = -3pt, ref={\thedefinition(\roman*)}]
		\item $m_{(X,Y)}(0,0) = \Delta$,
		\item  $X \, m_{(X,Y)}(a,b) = m_{(X,Y)}(a-1,b)$ if $a > 0$,
		\item  $Y \, m_{(X,Y)}(a,b) = m_{(X,Y})(a,b-1)$ if $b > 0$,
		\item\label[definition]{def:adaptedbasis:ord}   $a_1(m_{(X,Y)}(a,b)) = 0,$ if $a+b > 0$.
	\end{enumerate}
\end{definition}

\begin{prop}[{Nicolas-Serre \cite[Th\'eor\`eme 6.1]{NS2}, Bella\"iche \cite[Proposition 19, \S2.7]{Bmem}}]\label{prop:adaptedbasis}
  Any pair of graded parameters has a unique adapted graded basis.

  \begin{proof}
      The proof is taken from \cite{NS2}.  Using the isomorphism \( K \cong A^{\vee, \mathrm{cont}}\) it suffices to prove the result for \(  A^{\vee, \mathrm{cont}} \).  Here define \( m_{(X,Y)}(a,b) \) as the linear form dual to $X^aY^b$ on \( A \), sending
      \[
        \sum n_{ij} X^i Y^j \mapsto n_{ab} \,.
      \]
      Properties (i)--(iv) follow directly.  The uniqueness follows by recurrence on \( a+b \).
  \end{proof}
\end{prop}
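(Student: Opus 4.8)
The plan is to transport the statement across the perfect duality \eqref{eqn:duality} and to solve it on the dual side $A^{\vee,\mathrm{cont}}$, where an adapted basis is nothing but a topological dual basis. Since $(X,Y)$ is a pair of graded parameters, $A=\FF_2\llb X,Y\rrb$ topologically, so every $T\in A$ has a unique expansion $T=\sum_{i,j}n_{ij}X^iY^j$ with $n_{ij}\in\FF_2$. For existence I would define $\phi_{a,b}\in A^{\vee,\mathrm{cont}}$ to be the coefficient functional $T\mapsto n_{ab}$; this is continuous because it factors through the finite quotient $A/\mm^{N}$ for $N>a+b$. Transporting $\phi_{a,b}$ back through the isomorphism $K\cong A^{\vee,\mathrm{cont}}$ produces forms $m_{(X,Y)}(a,b)\in K$, and since $\{\phi_{a,b}\}$ is an $\FF_2$-basis of the continuous dual (every continuous functional being a finite combination of coordinate functionals), the $m_{(X,Y)}(a,b)$ form a basis of $K$.

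Verifying \cref{def:adaptedbasis}(i)--(iv) is then a routine unwinding of the duality. The identity $\langle T,Sf\rangle=\langle TS,f\rangle$ shows that the $A$-action on $K$ dualizes to $(Sf)^{\ast}=\big(T\mapsto f^{\ast}(ST)\big)$, so multiplication by $X$ (resp.\ $Y$) sends $\phi_{a,b}$ to $\phi_{a-1,b}$ (resp.\ $\phi_{a,b-1}$), with the convention that a negative index gives $0$; this is precisely (ii) and (iii), and in particular $X\,m_{(X,Y)}(0,b)=0=Y\,m_{(X,Y)}(a,0)$. Property (iv) reads $a_1(m_{(X,Y)}(a,b))=\langle 1_A,m_{(X,Y)}(a,b)\rangle=\phi_{a,b}(1_A)$, which vanishes unless $(a,b)=(0,0)$. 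For (i) I would use that $\Delta$ is the eigenform cut out by the maximal ideal, so $\mm\Delta=0$ and the functional $\Delta^{\ast}$ is the augmentation $A\to A/\mm=\FF_2$ reading off the constant term $n_{00}$; hence $\Delta^{\ast}=\phi_{0,0}$ and $m_{(X,Y)}(0,0)=\Delta$.

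For uniqueness I would argue by induction on $a+b$, which amounts to showing that any basis satisfying (i)--(iv) has $m(a,b)^{\ast}=\phi_{a,b}$, equivalently $a_1(X^iY^j m(a,b))=\delta_{(i,j),(a,b)}$ for all $i,j$; perfectness of the pairing then upgrades this to $m(a,b)=m_{(X,Y)}(a,b)$. On the staircase $i\le a$, $j\le b$ one peels off using (ii),(iii) to reach $m(a-i,b-j)$ and concludes by (i),(iv); in the region $i\ge a$, $j\ge b$ (not both equalities) one first collapses $X^aY^bm(a,b)=\Delta$ and then uses $\mm\Delta=0$ to get $X^{i-a}Y^{j-b}\Delta=0$. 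The main obstacle is the mixed region ($i>a$, $j<b$, or symmetrically), where only one of the two shift relations applies and naive peeling stalls at a term $X^{s}m(0,c)$ with $s,c\ge 1$: here (i)--(iv) must be supplemented by the boundary behaviour $X\,m(0,c)=0$ recorded above, together with the kernel descriptions $\ker(X\mid K)=\langle m_{(X,Y)}(0,j)\rangle_j$ and $\ker(Y\mid K)=\langle m_{(X,Y)}(i,0)\rangle_i$ (transpose to the image of multiplication on $A$), which force these stray values to vanish. I expect this boundary analysis, rather than the construction itself, to carry the real content of the uniqueness claim.
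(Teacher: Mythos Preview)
Your existence argument is exactly the paper's: transport to $A^{\vee,\mathrm{cont}}$ and take the coordinate functionals $\phi_{a,b}$; the verification of (i)--(iv) is routine and correct.

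For uniqueness you go well beyond the paper's one-line ``by recurrence on $a+b$'' and correctly locate where the naive induction stalls: in the mixed region $i>a,\ j<b$ (or symmetrically) one is left with $X^{s}m(0,c)$ for $s,c\ge 1$, and (i)--(iv) say nothing about $X\,m(0,c)$. Your proposed fix, however, is circular. The vanishing $X\,m(0,c)=0$ and $Y\,m(c,0)=0$ were established only for the \emph{constructed} basis $m_{(X,Y)}$; you then invoke them for the \emph{arbitrary} adapted basis $m$ whose uniqueness is in question. These boundary vanishings do not follow from (i)--(iv) as literally written: for instance, setting $m'(a,b)=m_{(X,Y)}(a,b)+m_{(X,Y)}(a-2,b+2)$ for $a\ge 2$ and $m'(a,b)=m_{(X,Y)}(a,b)$ otherwise yields a graded basis of $K$ satisfying (i)--(iv), yet $Y\,m'(2,0)=m_{(X,Y)}(0,1)\ne 0$ and $m'(2,0)\ne m_{(X,Y)}(2,0)$.

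The intended reading (as in the sources \cite{NS2,Bmem}) is that (ii) and (iii) hold for \emph{all} $a,b\ge 0$ with the convention $m(-1,b)=m(a,-1)=0$. With that in hand the recurrence becomes immediate: writing $h=m(a,b)-m_{(X,Y)}(a,b)$, the inductive hypothesis together with the boundary case gives $Xh=Yh=0$, so $\mm h=0$, whence $h\in\FF_2\Delta$; then (iv) forces $h=0$. No mixed-region analysis or kernel description is needed.
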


\begin{lemma}\label{invariance-of-basis-under-pinning} 
    Let \( (X,Y) \) and \( (X,Y') \) be two pairs of graded parameters agreeing in the first component.  Then the following equality holds for all \( a \geq 0 \),
    \[
        m_{(X,Y)}(a,0) = m_{(X,Y')}(a,0) \,.
    \]
    Likewise, for \( (X,Y) \) and \( (X', Y) \) , two pairs of graded parameters agreeing in the second component, the following equality holds for all \( b \geq 0 \),
    \[
    m_{(X,Y)}(0,b) = m_{(X',Y)}(0,b) \,.
    \]

    \begin{proof}
        We treat the first case; the second case is similar.  On \( A = \mathbb{F}_2\llb X,Y \rrb = \mathbb{F}_2\llb X, Y'\rrb \), the basis element \( m_{(X,Y)}(a,0) \) is the linear form sending
        \[
            m_{(X,Y)}(a,0) \colon \sum n_{ij} X^i Y^j \mapsto n_{a0} \,,
        \]
        likewise \( m_{(X,Y')}(a,0) \) is the linear form sending
        \[
            m_{(X,Y')}(a,0) \colon \sum n'_{ij} X^i (Y')^j \mapsto n'_{a0} \,.
        \]
        By \cref{def:gradedparams}, it follows that \( Y' = Y \beta^2 \), for some invertible \( \beta \).  Invertible means the constant term of~\( \beta \) is 1, so~
        \(
    \beta^2 \in 1 + (X^2,Y^2) 
        \).
        We see directly that the coefficient of \( X^i Y^0 \) in 
        \[
            \sum n'_{ij} X^i (Y')^j = \sum n'_{ij} X^i Y^j \beta^{2j}
        \]
        is \( n'_{i0} \).  In particular, this means that as a linear form, \( m_{(X,Y)}(a,0) \) also sends
        \[
            \sum n'_{ij} X^i (Y')^j \mapsto n'_{a0} \,.
        \]
        This shows that \(  m_{(X,Y')}(a,0) =  m_{(X,Y)}(a,0) \).  The second case is similar, so the claim holds.
    \end{proof}
\end{lemma}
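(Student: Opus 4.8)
The plan is to work entirely on the dual side, via the identification $K \cong A^{\vee, \mathrm{cont}}$ from \cref{subsec:duality} together with the explicit description of the adapted basis from the proof of \cref{prop:adaptedbasis}. Under that identification, for any pair of graded parameters $(X,Y)$ the element $m_{(X,Y)}(a,0)$ is exactly the continuous linear form on $A = \FF_2\llb X, Y\rrb$ that reads off the coefficient of the monomial $X^a Y^0 = X^a$, i.e.\ $\sum n_{ij} X^i Y^j \mapsto n_{a0}$. So the whole statement reduces to showing that this coefficient-extraction functional depends only on the first parameter $X$ and is insensitive to the choice of second graded parameter.

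First I would compare the two coordinate systems. By \cref{def:gradedparams}, writing $Y = y b^2$ and $Y' = y (b')^2$ with $b, b'$ invertible, the two second parameters differ by the square of a unit: $Y' = Y\beta^2$ with $\beta = b'b^{-1}$ invertible. Since a unit of $\FF_2\llb x,y\rrb$ has constant term $1$, we have $\beta = 1 + h$ with $h$ in the maximal ideal, and in characteristic $2$ this yields $\beta^2 = 1 + h^2 \in 1 + (X^2, Y^2)$; in particular $\beta^2$ is again a unit with constant term $1$ and only nonnegative powers of $Y$.

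The key step is a bookkeeping in the $Y$-order. Take an arbitrary $F \in A$ and expand it in the $(X, Y')$-coordinates as $F = \sum n'_{ij} X^i (Y')^j$, so that $m_{(X,Y')}(a,0)(F) = n'_{a0}$ by definition. Substituting $Y' = Y\beta^2$ rewrites $F = \sum n'_{ij} X^i Y^j \beta^{2j}$ in the $(X,Y)$-coordinates. Every summand with $j \geq 1$ lies in $Y^j A$ and hence has $Y$-order at least $1$ (as $\beta^{2j}$ has constant term $1$ and no negative powers of $Y$), so it contributes nothing to the coefficient of $X^a Y^0$; only the $j=0$ part $\sum_i n'_{i0} X^i$ survives, with coefficient of $X^a$ equal to $n'_{a0}$. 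Thus $m_{(X,Y)}(a,0)(F) = n'_{a0} = m_{(X,Y')}(a,0)(F)$ for every $F$, giving the equality of the two linear forms; the second assertion follows by interchanging the roles of the two parameters.

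Conceptually, the reason this works is that $Y$ and $Y'$ differ by a unit, so they generate the same closed ideal $YA = Y'A$, and the functional $m_{(X,\,\cdot\,)}(a,0)$ vanishes on that ideal and is therefore induced from the common quotient $A/YA \cong \FF_2\llb X\rrb$, on which it is simply ``coefficient of $X^a$.'' The only subtle point — and the step I would be most careful about — is confirming that the mixing introduced by $\beta^{2j}$ genuinely stays in strictly positive $Y$-order, which is precisely where the characteristic-$2$ fact $\beta^2 \in 1 + (X^2, Y^2)$ (and, more basically, that $\beta^2$ is a unit) enters. Beyond this bookkeeping I anticipate no real obstacle: the lemma ultimately says that adapted basis elements supported on a single coordinate axis are unaffected by the transverse parameter.
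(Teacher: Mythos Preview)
Your proof is correct and follows essentially the same route as the paper: both arguments identify $m_{(X,Y)}(a,0)$ with the coefficient-of-$X^a$ functional via duality, write $Y' = Y\beta^2$ for a unit $\beta$, and then observe that the $j\geq 1$ terms in $\sum n'_{ij} X^i Y^j \beta^{2j}$ carry positive $Y$-order and so cannot contribute to the $X^aY^0$-coefficient. Your added conceptual remark that both functionals factor through the common quotient $A/YA = A/Y'A \cong \FF_2\llb X\rrb$ is a nice way to phrase the underlying reason, but the mechanics are identical to the paper's proof.
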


It will be convenient to introduce the notation
\begin{equation}\label{eqn:mabdef}
    m(a, b) \coloneqq m_{(T_3, T_5)}(a,b) \,,
\end{equation}
to suppress the graded parameters $(X, Y)$ from the notation in the case $(X, Y)  = (x,y) =  (T_3,T_5)$.

\subsection{Review of pseudorepresentations of dimension \texorpdfstring{$2$}{2} and determinant \texorpdfstring{$1$}{1}}

In the next two sections we recall the work of Bellaïche from \cite{Brep} on the Galois (pseudo)representation carried by $A$.

\begin{definition}[Chenevier \cite{Chenevier}]\label{trace-determinant-identity}
    Let $\Gamma$ be a group and $B$ be a commutative ring. A \emph{pseudorepresentation} (of dimension $2$ with determinant $1$, in the sense of Chenevier) of $\Gamma$ on $B$ is a function $t: \Gamma \to B $
satisfying the following
\begin{enumerate}[ref=\thedefinition(\roman*)]
        \item  $t(1) = 2,$
        \item $t$ is central, i.e. $t(gh) = t(hg)$ for all $g,h \in \Gamma$,
        \item\label[definition]{cond:tr-det} $t(gh) + t(g^{-1}h) = t(g)t(h)$ for all $g,h \in \Gamma$.
    \end{enumerate}
\end{definition}

The function $t$ is designed to mimic the properties of trace of a determinant-$1$ representation. Indeed, if $\rho: \Gamma \to \SL_2(B)$ is a representation, it is easy to show that $t \coloneqq \tr \rho$ is a pseudorepresenation of $\Gamma$ on $B$ as above. The converse is true if $B$ is an algebraically closed field. 
Pseudorepresentations in the sense of Chenevier generalize earlier notions of \emph{pseudocharacters} developed by Wiles, Taylor, and Rouquier to the setting where the characteristic is nonzero and does not exceed the dimension.

The \emph{kernel} of a pseudorepresentaion $t: \Gamma \to B$ is the set 
$$\ker t \coloneqq \{g \in \Gamma, t(gh) = t(g) \mbox{ for all $h \in \Gamma$}\}.$$
One can check that $\ker t$ is a normal subgroup of $\Gamma$ through which $t$ factors, and that the induced map $t: G/\ker t \to B$ has trivial kernel. Furthermore, if $\Gamma$ is a profinite group, $B$ is a profinite ring, and $t: \Gamma \to B$ is continuous, then $\ker t$ is a closed normal subgroup of $\Gamma$.

\subsection{The pseudorepresentation carried by \texorpdfstring{$A$}{A}}\label{pseudo}
Fix an algebraic closure $\overline{\Q}$ of $\Q$. Let $E$ be the maximal pro-$2$ extension of $\QQ$ unramified outside $2$.
This is the union of all finite Galois extensions~$L$ of $\QQ$ that are unramified outside $2$ whose Galois group is a $2$-group. Then $E$ is automatically Galois over $\QQ$; set $G \coloneqq \Gal(E/\QQ)$, a profinite topological group. In particular, $\Gal(L/\QQ)$ for any~$L$ as above is a continuous quotient of $G$.

By gluing together traces of representations attached by a theorem of Deligne to $2$-adic eigenforms of level~$1$ and then reducing modulo $2$, Bellaïche has shown the following. 

\begin{theorem}[Bellaïche, {\cite[Théorème 1]{Brep}}]
    There exists a unique continuous pseudorepresentation $t: G \rightarrow A$ with $t(\Frob_\ell) = T_\ell$ for all odd primes $\ell$.
\end{theorem}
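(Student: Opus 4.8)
The plan is to obtain uniqueness cheaply from density of Frobenius elements, and to obtain existence by gluing the Galois representations that Deligne attaches to classical level-$1$ eigenforms, reducing modulo $2$, and pushing the result into the inverse limit defining $A$.

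\textbf{Uniqueness.} First I would invoke the Chebotarev density theorem for each finite quotient $\Gal(L/\QQ)$ of $G = \Gal(E/\QQ)$: every conjugacy class is met by some $\Frob_\ell$ with $\ell$ odd, so the Frobenius classes $\{\Frob_\ell : \ell \neq 2\}$ are dense in $G$. A pseudorepresentation is central by Definition~\ref{trace-determinant-identity}, hence a continuous function on the compact Hausdorff space of conjugacy classes of $G$; since $A$ is Hausdorff (being profinite) and the values $t(\Frob_\ell) = T_\ell$ are prescribed on a dense set, any two continuous $t$ satisfying the conclusion agree everywhere. This settles uniqueness.

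\textbf{Existence.} For each even weight $k$, let $\mathbb{T}_k$ be the $\ZZ_2$-algebra generated by the operators $T_\ell$ ($\ell$ odd) acting on $S_k(\SL_2(\ZZ))$; it is a finite free $\ZZ_2$-module. Each normalized $\overline{\QQ}_2$-eigenform $f$ of weight $k$ gives, by Deligne, a continuous $\rho_f : G \to \GL_2(\overline{\QQ}_2)$ unramified outside $2$ with $\tr\rho_f(\Frob_\ell) = a_\ell(f)$ and $\det\rho_f(\Frob_\ell) = \ell^{k-1}$. Taking traces and gluing over the finitely many eigensystems in weight $k$ produces a continuous pseudorepresentation $t_k : G \to \mathbb{T}_k$ with $t_k(\Frob_\ell) = T_\ell$; here the trace descends into the closed subring $\mathbb{T}_k$ topologically generated by the $\tr\Frob_\ell = T_\ell$ precisely because the Frobenii are dense. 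Since $\ell^{k-1} \equiv 1 \pmod 2$ for odd $\ell$, the determinant character becomes trivial after reduction, so $\bar t_k \coloneqq t_k \bmod 2 : G \to \mathbb{T}_k/2\mathbb{T}_k$ is a dimension-$2$, determinant-$1$ pseudorepresentation in the sense of \cref{trace-determinant-identity}: the Cayley--Hamilton identity $g + g^{-1} = \tr(g)\cdot\mathrm{id}$ in $\SL_2$ yields axiom \cref{cond:tr-det} for each $\rho_f$, hence for the glued trace. Finally I would transfer to $A$: every level-$1$ form reduces modulo $2$ to a polynomial in $\Delta$ (\cref{lem:structureLevel1}), the Hecke action is compatible with reduction, and for $k$ large relative to $n$ the algebra $\mathbb{T}_k/2$ surjects onto the finite Hecke algebra $A_n$ acting on $K_n$, compatibly as $k,n$ grow. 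Pushing each $\bar t_k$ forward along $\mathbb{T}_k/2 \twoheadrightarrow A_n$ gives compatible pseudorepresentations $G \to A_n$ sending $\Frob_\ell \mapsto T_\ell$, and their inverse limit is the desired continuous $t : G \to A = \varprojlim A_n$.

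The hard part will be this last transfer step. The delicate points are (i) verifying that Deligne's weight-$k$ pseudorepresentations descend compatibly onto the projective system $\{A_n\}$ — that is, that $\mathbb{T}_k/2$ really surjects onto $A_n$ and that the pushforwards are consistent across weights, which rests on the Nicolas--Serre description of $K$ and the compatibility of Hecke with reduction; (ii) that the trace genuinely lands in the closed Hecke subalgebra rather than a larger ring at each finite level; and (iii) continuity of the limiting $t$, which follows once the finite-level pseudorepresentations are shown to be continuous and compatible. Everything else is formal.
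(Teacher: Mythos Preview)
Your approach matches the paper's, which does not actually prove this theorem but cites it as \cite[Théorème~1]{Brep} with the one-line summary ``gluing together traces of representations attached by Deligne to $2$-adic eigenforms of level~$1$ and then reducing modulo~$2$''. That is exactly the strategy you spell out, so in substance you are correct.

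One point you slide past, though, is nontrivial and is in fact a separate step (the \emph{Première étape}) in Bella\"iche's argument. You write $\rho_f : G \to \GL_2(\overline{\QQ}_2)$, but Deligne only furnishes $\rho_f : \Gal(\overline\QQ/\QQ) \to \GL_2(\overline{\QQ}_2)$, unramified outside~$2$; there is no a~priori reason this factors through the maximal pro-$2$ quotient $G = \Gal(E/\QQ)$. That it does (after choosing an integral lattice) requires Tate's theorem \cite{Tmod2} that the unique mod-$2$ Hecke eigensystem in level~$1$ is the trivial one: then $\bar\rho_f$ has image a $2$-group inside $\GL_2(\overline\FF_2)$, so the image of $\rho_f$ in $\GL_2(\OO)$ sits in an extension of a $2$-group by the pro-$2$ kernel of reduction, hence is itself pro-$2$, and $\rho_f$ descends to $G$. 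The paper itself alludes to this step when treating the level-$9$ analogue (``Analogously to \cite[Première étape of the proof of Théorème~1]{Brep}\ldots''). Once you insert this, your sketch is complete and faithful to the cited proof.
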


From deformation theory we know that the cotangent space $\mm/\mm^2$ of $A$ has, as a basis $\{t(g_3), t(g_5)\}$ for any $g_3 \in G^3$ and $g_5 \in G^5$ \cite[Lemma 7.76]{Chenevier} or \cite[\S 7.5]{Mthesis}. 
Combined with the grading, we know that $t(g_3) \in (x + \mm^2) \cap A^3$ and $t(g_5) \in (y + \mm^2) \cap A^5$. It follows that $\big(t(g_3), t(g_5)\big)$ is a pair of graded parameters in the sense of \cref{def:gradedparams}~\cite[\S 2.7]{Bmem}. 

In the next section we specialize these to graded parameters coming from \emph{pinnings of the Galois group}.

\subsection{The structure of Galois group of interest}\label{sec:structureofgaloisgp}
We now recall the structure of $G = \Gal(E/\Q)$, as studied by Serre \cite[\S 2]{Bmem} following initial investigations by Markshaitis \cite{M}. 

Let $L^\ast = \QQ(\zeta_8) \subseteq E$. Markshaitis shows that $G^\ast \coloneqq \Gal(E/L^\ast) \subset G$ is the Frattini subgroup\footnote{The Frattini subgroup of a pro-$p$ group $\Gamma$ is the smallest closed normal subgroup of $\Gamma$ generated by commutators and~$p^{\rm th}$ powers, so that the quotient is the largest elementary $p$-group quotient of $\Gamma$.} of~$G$. We identify the quotient $G/G^\ast = \Gal(L^\ast/\QQ) = \Gal(\QQ(\zeta_8)/\QQ)$ with $(\ZZ/8\ZZ)^\times$ in the usual way: $a \in \ZZ/8\ZZ$ acts on any primitive $8^{
\rm th}$ root of unity $\zeta_8$ by raising it to the $a^{\rm th}$ power. Denote the quotient map $Q: G \to (\ZZ/8\ZZ)^\times$. This gives a coset decomposition $$G = G^1 \cup G^3 \cup G^5 \cup G^7,$$ where $G^i = Q^{-1}(i)$, making $t: G \to A$ into a $(\ZZ/8\ZZ)^\times$-graded pseudorepresentation in the sense of~\mbox{\cite[\S 3]{deomed}}: for odd primes $\ell$ the conjugacy class of $\Frob_\ell$ is contained inside $G^\ell$.

Serre proves that $G$ has presentation $\langle g, c \mid c^2 =1 \rangle$ in the category of pro-$2$ groups, where $c$ is a complex conjugation (equivalently, as Serre shows, $c$ is any element of order $2$) and $g$ is any element in $G^3$ or in $G^5$ \cite[\S 1]{Bmem}. A choice $(g, c)$ satisfying these properties will be called a \emph{pinning} of $G$. In the sequel we will focus on pinnings with $g \in G^3$, which we call a \emph{$3$-pinning}. If $e = (g, c)$ is a $3$-pinning of $G$, then it follows from the discussion in the last paragraph of \cref{pseudo} that $\big( t(g), t(cg)\big)$ is a pair of graded parameters for $A$. 
In particular, by \cref{prop:adaptedbasis} we can find an adapted basis for $\big( t(g), t(cg)\big)$, which we denote $\{m_e(a, b)\}_{a \geq 0, b\geq 0}.$

\subsection{The field of determination; abelian and dihedral forms} \label{tf}
For a mod-$2$ modular form $f$ in~$K$, write  $\ann(f)$ for the ideal of $A$ annihilating $f$, and write $A_f \coloneqq A/\ann(f)$, the quotient of $A$ acting faithfully on the Hecke span $K_f \coloneqq Af$ of $f$ in $K$. The perfect pairing in \eqref{eqn:duality} restricts to a perfect pairing $A_f \times K_f \to \FF_2,$ so that both $A_f$ and $K_f$ are of the same finite $\FF_2$-dimension.

Let $t_f: G \stackrel{t}\to A \twoheadrightarrow A_f$ be the composition of $t$ with the quotient $A \twoheadrightarrow A_f$. This is a pseudorepresentation with finite image, so it factors through a finite group $G_f \coloneqq G/\ker t_f$. Let $L_f$ be the fixed field of $\ker t_f$. The upshot is that we obtain a pseudorepresentation $t_f: G_f = \Gal(L_f/\QQ) \to A_f$ of a finite Galois 2-group on a finite local $\FF_2$-algebra, still satisfying $t(\Frob_\ell) = T_\ell$. From \eqref{def:hecke} we have $a_\ell(f) = a_1(T_\ell f)$, so that for any odd prime $\ell$, the $\ell^{\rm th}$ coefficient of $f$ is determined by $\Frob_\ell$ in the finite Galois extension $L_f$ over $\QQ.$ Any such field is sometimes called a \emph{field of determination} of $f$ (see \cite[\S 11]{Bim} and \cite[\S 3]{Bmem}); our $L_f$ defined here is the minimal such field.

In \cite{Bmem} Bellaïche defines and analyses two special families of forms in $K$: \emph{abelian} forms and \emph{dihedral} forms. A form $f$ in $K$ is \emph{abelian} if $a_\ell(f)$ depends only on the congruence class of $\ell$ modulo a power of $2$ (equivalently, $a_\ell(f)$ depends only on $\Frob_\ell$ in an abelian extension of $\QQ$). A form $f$ is \emph{dihedral} if~$a_\ell(f)$ depends only on $\Frob_\ell$ in a dihedral extension of $\QQ$. The graded dihedral forms further fall into two families, \emph{$\QQ(\sqrt{-2})$-dihedral} and \emph{$\QQ(i)$-dihedral}, with the exception of $\Delta$, $\Delta^3$, and $\Delta^5$, which belong to both families. Call a form in $K$ \emph{special} if it belongs to the span of the abelian and the dihedral forms.

Bellaïche shows that the set of special forms is ``thin" in the set of all forms, in the following precise sense.  The dimension of all forms of nilpotence index up to $N$ is \( \frac{1}{2} N(N-1) \), as \( \FF_2\langle m(a,b) \mid a + b < N \rangle \) is a basis.  The space of abelian forms is characterized \cite[Corollaire 17]{Bmem} as being spanned by the forms
\begin{align*}
    \sum_{\substack{a \equiv k \pmod{2} \\ 0 \leq a \leq k}} m_e(a,k-a) \,, \quad k \geq 0 \,, \quad \text{ and } 
    \sum_{\substack{a \not\equiv k \pmod{2} \\ 0 \leq a \leq k}} m_e(a,k-a) \,, \quad k \geq 1, 
\end{align*} 
where \( e = (g, c) \) is a pinning of \( G \).
Hence the  abelian forms with nilpotence index up to~$N$ are a~\( 2N-1\) dimensional space.  Likewise, for any adapted basis, the space of dihedral forms is characterized \mbox{\cite[Corollaire 19]{Bmem}} as being spanned by the forms \( m(a,0) \), \( a \geq 0 \), and \( m(0,b), b \geq 0 \).  The dihedral forms with nilpotence index up to~$N$ also span a \( 2N - 1 \) dimension space.  We note that \( m_e(0,0), m_e(1,0) \) and \( m_e(0,1) \) are already abelian forms, so together the abelian and dihedral forms of nilpotence index up to~$N$ form a \( 4N - 5 \) dimensional space (for \( N > 1 \)).  Since
\[
    4N - 5 \ll \frac{1}{2} N(N-1) \,,
\]
these special forms are indeed ``thin'' in the set of all forms.

\section{Densities of mod-2 modular forms}\label{sec:density}
In this section we take the opportunity to communicate results of Bellaïche on the \emph{density} of a mod-$2$ modular form $f$ --- the proportion of primes $\ell$ for which $a_\ell(f)$ is nonzero.  Some of these results already appear in \cite[\S10]{Bim}; others ---  \cref{item:densityleq1/4} and \cref{bellaiche-density} in particular --- 
have never been published. Most notably \cref{bellaiche-density} gives a surprising and elegant formula for the densities of a   dihedral form in terms of the base-$2$ digits of its nilpotence index. These results will then feed into our eta-power-density computations in \cref{proofs}. All mistakes below are ours.

\subsection{Review of Bellaïche density}\label{belaichedensity}
Following Bellaïche \cite[\S1.2.1, \S10]{Bim}, we define the density $\bel(f)$ of a mod-$2$ modular form $f$ as follows.

\begin{definition}\label{def:beldensity}
Let \( f \) be a modular form modulo~$2$.  Then its density is
\begin{equation}\label{densitydef}\bel(f) \coloneqq  \lim_{n \rightarrow \infty} \frac{\#\{\ell \leq n~ {\rm prime}\mid a_\ell(f) \not\equiv 0 \pmod{2}\} }{\pi(n)}\,.
\end{equation}
 
\end{definition}\smallskip

\begin{remark}
From \cref{dplemma} it follows that $\bel(f) = {\mathcal D}_2(f, 1)$. 
In particular, if $24 \mid r$, then $m_r = 1$, so that $\del(r) = {\mathcal D}_2\big(\eta^r(q), 1\big) = \bel(p_r)$.
\end{remark}

The limit in \eqref{densitydef} exists and is always a rational number, as the association $\ell \to a_\ell(f)$ is \emph{frobenian} in the sense of Serre \cite{S0} (see  \cref{def:frobenian-set}). Indeed, recall that $G_f = G/\ker t_f$ is the quotient of the Galois group of the fixed field of the maximal \mbox{pro-2} extension $E$ of $\QQ$ unramified outside $2$ by the subgroup $\ker t_f$, as defined in \cref{tf}.
Since by the Chebotarev Density Theorem the Frobenius elements equidistribute, the problem of computing $\bel(f)$ reduces to counting the number of certain elements in the Galois group~$G_f.$ More precisely,

\begin{equation}\label{density-calculation-with-chebotarev}
\bel(f) = \frac{\#\{h \in G_f \mid a_1(t(h)f) = 1\}}{\# G_f} = \frac{\#\{h \in G_f \mid f^\ast \circ t_f(h) = 1\}}{\# G_f},
\end{equation}

\smallskip

where $f^\ast: A \to \FF_2$ is the linear form on $A$ corresponding to $f$, sending $T \in A$ to $a_1(Tf)$, established in \cref{subsec:duality}.

A priori it may seem unusual to consider only prime coefficients in $\delta(f)$, but in fact these nearly determine the form. This is a consequence of the following result of Bellaïche.

\begin{theorem}[Bellaïche, {\cite[Theorem 1]{Bim}}]
\label{item:densitygeq0}
  For $f \in K$, we have $\bel(f) > 0$ unless $f = 0,\, \Delta$. 
  \end{theorem}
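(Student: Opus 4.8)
The plan is to translate the density statement, via the Chebotarev formula \eqref{density-calculation-with-chebotarev}, into an assertion about a single ideal inside the Hecke algebra $A$, and then to pin down that ideal using the pseudorepresentation relation. Concretely, \eqref{density-calculation-with-chebotarev} shows that $\bel(f) = 0$ precisely when the continuous linear form $f^\ast$ satisfies $f^\ast\big(t(h)\big) = 0$ for every $h \in G$. Since $f^\ast$ is continuous, this is equivalent to $f^\ast$ vanishing on the closed $\FF_2$-span $V \coloneqq \overline{\langle\, t(h) : h \in G \,\rangle}$ of the image of $t$, which by the perfect pairing \eqref{eqn:duality} says exactly $f \in V^\perp$. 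Thus the theorem is equivalent to the single identity $V^\perp = \FF_2\Delta$, or dually $V = \mm$, and the bulk of the work is to establish $V = \mm$.

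First I would show $V \subseteq \mm$. The reduction $t \bmod \mm \colon G \to A/\mm = \FF_2$ is the trace of the residual pseudorepresentation carried by $A$, which is attached to $\Delta$. Because $\Delta \equiv \sum_{n \text{ odd}} q^{n^2} \pmod 2$, we have $a_\ell(\Delta) = 0$ for every odd prime $\ell$, so $t(\Frob_\ell) = T_\ell \equiv 0 \bmod \mm$; as Frobenius elements are dense in $G$ and $t \bmod \mm$ is continuous into the discrete field $\FF_2$, it vanishes identically. Hence every $t(h)$ has zero constant term, i.e.\ lies in $\mm$, giving $V \subseteq \mm$. (Equivalently $\langle t(h), \Delta\rangle = a_1\big(t(h)\Delta\big) = 0$, so $\Delta \in V^\perp$, recovering $\bel(\Delta) = 0$.)

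The crux is the reverse inclusion $\mm \subseteq V$, and the key observation is that $V$ is not merely a subspace but a closed ideal. By the trace--determinant identity \cref{cond:tr-det} we have $t(g)t(h) = t(gh) + t(g^{-1}h)$, so each $t(g)$ carries $V$ into $V$; since finite $\FF_2$-linear combinations of products $t(g_1)\cdots t(g_k)$ are dense in $A$ (the $t(\Frob_\ell) = T_\ell$ topologically generate $A$ by construction), continuity of multiplication forces $A \cdot V \subseteq V$. As $V$ contains a pair of graded parameters $\big(t(g_3), t(g_5)\big)$ with $g_3 \in G^3$, $g_5 \in G^5$, and these generate $\mm$ as an ideal of the regular local ring $A$, we conclude $\mm \subseteq V$, whence $V = \mm$. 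Finally $V^\perp = \mm^\perp$, and in the basis $\{m(a,b)\}$ dual to $\{X^a Y^b\}$ the orthogonal complement of $\mm = (X,Y)$ is spanned by $m(0,0) = \Delta$, so $V^\perp = \FF_2\Delta = \{0, \Delta\}$, as desired.

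I expect the main obstacle to be the third step, specifically the topological bookkeeping needed to promote ``$V$ is stable under multiplication by each $t(g)$'' to ``$V$ is an honest closed ideal of $A$.'' This rests on the density of the subalgebra generated by the $T_\ell$ together with continuity in the profinite setting, and on the point that a genuine pair of graded parameters lives inside the image $t(G)$ itself (not merely somewhere in $A$); both are available from the earlier results, but the limiting arguments are where care is required.
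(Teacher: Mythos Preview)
The paper does not prove this theorem; it merely cites it as \cite[Theorem~1]{Bim}. Your argument is correct and in fact reconstructs the core of Bellaïche's own proof: the key step, that the closed $\FF_2$-span of $t(G)$ (equivalently, of the $T_\ell$) coincides with $\mm$, is exactly the content of \cite[Lemma~10.2.4]{Bim}, which the present paper explicitly invokes in the level-$9$ analogue (item~\ref{local9} of \S\ref{sec:lvl9}). Your use of the trace--determinant identity to show $V$ is multiplicatively closed, together with the observation that a pair of graded parameters already lies in $t(G)$, is precisely how one establishes that lemma in this setting; the topological bookkeeping you flag is routine once one notes that $A$ is profinite and multiplication is continuous.
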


As a corollary, if $f$ and $g$ are two forms in $K$ with $a_\ell(f) = a_\ell(g)$ for all (or all but a density-zero set of) primes $\ell$, then $f = g$ or $f = g + \Delta$. Therefore prime coefficients all but determine the form, even for noneigenforms. 

If $f$ is a \emph{graded} form, that is $f \in K^i$ for some $i \in (\ZZ/8\ZZ)^\times$, then it is clear that $\bel(f) \leq \frac{1}{4}$. In fact, this inequality is typically strict. 

\begin{theorem}[Bellaïche, unpublished]
\label{item:densityleq1/4} For $f$ in any $K^i$, we have $\delta(f) < \frac{1}{4}$, unless $f = \Delta^3$, $\Delta^5$.
\end{theorem}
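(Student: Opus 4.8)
The plan is to run the Chebotarev reformulation \eqref{density-calculation-with-chebotarev} and exploit the $(\ZZ/8\ZZ)^\times$-grading. For $f \in K^i$ we have $a_\ell(f) = a_1(T_\ell f)$ with $T_\ell f \in A^\ell K^i \subseteq K^{\ell i}$, so $a_\ell(f) = 0$ unless $\ell i \equiv 1 \pmod{8}$, i.e.\ (since every element of $(\ZZ/8\ZZ)^\times$ is an involution) unless $\ell \equiv i \pmod{8}$. Hence the primes with $a_\ell(f) \ne 0$ lie in a set of density $\tfrac14$, giving $\del(f) \le \tfrac14$; and since $\{\ell \equiv i \pmod{8} : a_\ell(f) = 0\}$ is frobenian of density $\tfrac14 - \del(f)$, equality holds if and only if this set is finite, equivalently (after Chebotarev) if and only if $f^\ast(t(h)) = 1$ for \emph{every} $h$ in the coset $G^i$.

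First I would dispose of $i = 1$ and $i = 7$, where equality is impossible. For $i = 1$ the identity lies in $G^1$ and $t(1) = 2 = 0$, so $f^\ast(t(1)) = 0 \ne 1$; as a positive density of primes have trivial Frobenius, $\del(f) < \tfrac14$ for all $f \in K^1$. For $i = 7$ a complex conjugation $c$ lies in $G^7$, and applying \cref{cond:tr-det} with $g = h = c$ and $c^2 = 1$ gives $t(c)^2 = t(1) + t(1) = 0$, whence $t(c) = 0$ because $A \cong \FF_2\llb x,y\rrb$ is a domain; thus $f^\ast(t(c)) = 0 \ne 1$ and $\del(f) < \tfrac14$ for all $f \in K^7$. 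So no form in $K^1$ or $K^7$ attains the bound.

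The substance is $i = 3$ and $i = 5$; I treat $i = 3$, with $i = 5$ symmetric. The key input is that the candidate $\Delta^3 \in K^3$ already attains $\del(\Delta^3) = \tfrac14$: indeed $\Delta^3 \equiv \big(\sum_{n \text{ odd}} q^{n^2}\big)^3 \pmod{2}$, so for a prime $\ell \equiv 3 \pmod{8}$ the coefficient $a_\ell(\Delta^3)$ is the parity of the number of ordered representations of $\ell$ as a sum of three (necessarily odd) squares, which by Gauss's three-squares formula equals the parity of $h(-\ell)$; and $h(-\ell)$ is odd by genus theory, since $-\ell$ is a prime discriminant. Granting this, uniqueness is soft: if $f \in K^3$ has $\del(f) = \tfrac14$, then by the frobenian finiteness above $a_\ell(f) = 1 = a_\ell(\Delta^3)$ for all but finitely many $\ell \equiv 3 \pmod{8}$, while $a_\ell(f) = 0 = a_\ell(\Delta^3)$ for $\ell \not\equiv 3 \pmod{8}$ since both forms lie in $K^3$. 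Hence $f + \Delta^3 \in K^3$ has vanishing $\ell$-th coefficient for almost all primes, so $\del(f + \Delta^3) = 0$, and \cref{item:densitygeq0} forces $f + \Delta^3 \in \{0, \Delta\}$; as $\Delta \notin K^3$ we conclude $f = \Delta^3$.

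I expect the attainment step to be the main obstacle. For $\Delta^3$ the three-squares/genus-theory argument is clean, but for $\Delta^5 \in K^5$ the analogue — the parity of the number of representations of $\ell \equiv 5 \pmod{8}$ as a sum of five odd squares — does not reduce to a single class number as transparently, although small cases ($\ell = 5, 13, 29$ give $1, 5, 15$ ordered representations, all odd) strongly suggest $\del(\Delta^5) = \tfrac14$. Here I would instead invoke the symmetry of the Nicolas--Serre structure $A \cong \FF_2\llb T_3, T_5\rrb$ under $T_3 \leftrightarrow T_5$, which interchanges $K^3 \leftrightarrow K^5$ and carries the module $K_{\Delta^3} = \langle \Delta^3, \Delta\rangle$ to $K_{\Delta^5} = \langle \Delta^5, \Delta\rangle$ while preserving densities, transporting $\del(\Delta^3) = \tfrac14$ to $\del(\Delta^5) = \tfrac14$. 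Care is needed to justify this symmetry at the level of the Galois pseudorepresentation (or, alternatively, to settle the five-squares parity directly); once it is in hand, the uniqueness argument of the previous paragraph applies verbatim in $K^5$ and completes the proof.
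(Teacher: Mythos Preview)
Your overall architecture is exactly the paper's: for $i=1,7$ exhibit an element of $G^i$ (namely $1$ or $c$) with $t=0$; for $i=3,5$ use that $\bel(\Delta^i)=\tfrac14$ and deduce $\bel(f)=\tfrac14-\bel(f+\Delta^i)$, then invoke \cref{item:densitygeq0}. So the strategy is correct and matches Bella\"iche.

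The only genuine gap is the attainment $\bel(\Delta^5)=\tfrac14$. Your proposed $T_3\leftrightarrow T_5$ symmetry is not available: although $A\cong\FF_2\llb x,y\rrb$ abstractly, there is no automorphism of the \emph{pair} $(A,t)$ swapping the graded pieces, so you cannot transport the density of $\Delta^3$ to $\Delta^5$ that way. Your five-squares route is also unnecessarily hard. The clean fix --- which handles both $i=3$ and $i=5$ uniformly and is what underlies the paper's footnote --- is to write $\Delta^3=\Delta\cdot\Delta^2$ and $\Delta^5=\Delta\cdot\Delta^4$ and use that squaring is $q\mapsto q^2$ mod~$2$: then
\[
\Delta^3\equiv\sum_{m,n\ \mathrm{odd}}q^{\,m^2+2n^2},\qquad
\Delta^5\equiv\sum_{m,n\ \mathrm{odd}}q^{\,m^2+4n^2}.
\]
For a prime $\ell\equiv3\pmod8$, the form $x^2+2y^2$ represents $\ell$ uniquely in positive integers (split prime in $\ZZ[\sqrt{-2}]$, a PID), and $\ell\equiv3\pmod8$ forces $m,n$ odd; so $a_\ell(\Delta^3)=1$. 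For $\ell\equiv5\pmod8$, write $\ell=a^2+b^2$ uniquely with $a$ odd (split prime in $\ZZ[i]$); then $b=2n$ and $\ell\equiv5\pmod8$ forces $n$ odd, so $a_\ell(\Delta^5)=1$. This replaces both your ternary argument and the symmetry gambit, and closes the proof.
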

\begin{proof}[Proof of \cref{item:densityleq1/4} (Bellaïche)]
First assume that $i = 3$ or $5$. We first consider $\Delta^i$ itself. For $\ell$ prime 
we have $a_\ell(\Delta^i) = 1$ if and only if $\ell \equiv i \cmod{8}$,\footnote{Indeed, one can show that for $i = 3, 5$ we have  $a_n(\Delta^i) = 1$ if and only if $i = \ell^b m^2$, where $\ell$ is a prime congruent to $i$ modulo $8$, $b \equiv 1 \mod{4}$, and $m$ is odd and not divisible by $\ell$.} so that $\bel(f) = \frac{1}{4}.$ 
Now if $f  \in K^i$, then consider $g = f + \Delta^i$. Then $a_\ell(f) = 1$ if and only if $\ell \equiv i \cmod{8}$ and $a_\ell(g) \equiv 0$. Therefore $\delta(f) = \frac{1}{4} - \delta(g)$. \cref{item:densitygeq0} applied to $g$ proves the claim. 

Finally if $i = 1$ or $7$, then we have $\delta(f) = \frac{\# H_f}{\# G_f}$, where $G_f$ as above is a (finite) Galois group and $H_f = \{g \in G_f: a_1(t(g) f) = 1\}.$ We know of course that $H_f \subseteq G_f^i$, the coset of $G_f$ containing all $\Frob_\ell$ with $\ell \equiv i \cmod{8}$. It suffices therefore to show that $H_f \subsetneq G_f^i$. Note that both $1$ and $c$ are never in $H_f$ for any $f \in K$, as $t(1) = t(c) = 0.$ Therefore if $f \in K^1$, then $1 \in G_f^i - H_f$, and if $f \in K^7$, then $c \in G_f^i - H_f$.  Thus in each case $H_f$ is strictly smaller than $G_f^i$, which completes the proof.
\end{proof}

We also record a corollary of \cref{item:densitygeq0,item:densityleq1/4} for any power of $\Delta$. 
\begin{corollary}\label{alldeltapowers} For any $n$ we have $0 \leq \bel(\Delta^n) \leq \frac{1}{4}$, with $\bel(\Delta^n) = 0$ if and only if $n = 1$ or $n$ is even; and $\bel(\Delta^n) = \frac{1}{4}$ if and only if $n = 3,5$. 
\end{corollary}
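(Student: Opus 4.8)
The plan is to assemble \cref{alldeltapowers} directly from \cref{item:densitygeq0,item:densityleq1/4}, treating the lower and upper bounds separately and then matching up the cases of equality. The key preliminary observation is that every $\Delta^n$ is a graded form: since $\Delta$ is supported on $q^{m^2}$ with $m$ odd, one has $\Delta^n \in K^{i}$ where $i \equiv n \pmod 8$ (this is exactly the grading recalled around \eqref{eqn:Kispan}). Because $\Delta^n$ lies in some single graded piece $K^i$, the inequality $\bel(\Delta^n) \leq \tfrac14$ is immediate from the remark preceding \cref{item:densityleq1/4} that any graded form has density at most $\tfrac14$, and the lower bound $\bel(\Delta^n) \geq 0$ is trivial. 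So both bounds in the statement are essentially free; the content is in the two equality classifications.

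For the classification of when $\bel(\Delta^n) = 0$, I would invoke \cref{item:densitygeq0}: for $f \in K$ we have $\bel(f) > 0$ unless $f = 0$ or $f = \Delta$. Here $\Delta^n = 0$ in $K$ never happens for $n \geq 1$ (these are genuinely nonzero $q$-series), so the only way to get density zero is $\Delta^n = \Delta$ as elements of $K$. The step that needs care is arguing precisely when $\Delta^n$ equals $\Delta$ modulo $2$. The claim in the corollary is that $\bel(\Delta^n) = 0$ iff $n=1$ or $n$ is even. The $n$ even case should be handled not via $\Delta^n = \Delta$ but via the fact that even powers are \emph{squares}: $\Delta^{2m} = (\Delta^m)^2$, and squaring a mod-$2$ power series replaces $q$ by $q^2$ in the support, so $a_\ell(\Delta^{2m}) = 0$ for every odd prime $\ell$ (a prime $\ell$ is never a perfect square power in the relevant sense). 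This forces $\bel(\Delta^{2m}) = 0$ directly from the definition \eqref{densitydef}, bypassing \cref{item:densitygeq0} entirely. For $n=1$ we have $\Delta^1 = \Delta$, the explicitly excluded form, giving density zero again. Conversely, if $n$ is odd and $n \neq 1$, then $\Delta^n \neq \Delta$ (these are distinct elements of the basis $\{\Delta, \Delta^3, \Delta^5, \dots\}$ of $K$) and $\Delta^n \neq 0$, so \cref{item:densitygeq0} yields $\bel(\Delta^n) > 0$. This completes the zero-density classification.

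For the classification of when $\bel(\Delta^n) = \tfrac14$, I would again split on the grading. If $n$ is even then $\bel(\Delta^n) = 0 \neq \tfrac14$ by the above, so only odd $n$ are candidates. For odd $n$, write $i \equiv n \pmod 8$, so $i \in \{1,3,5,7\}$ and $\Delta^n \in K^i$. When $i \in \{3,5\}$, \cref{item:densityleq1/4} says $\bel(f) < \tfrac14$ for all $f \in K^i$ \emph{except} $f = \Delta^3, \Delta^5$, which have density exactly $\tfrac14$ (the explicit computation inside that proof, $a_\ell(\Delta^i)=1 \iff \ell \equiv i \pmod 8$). So among forms in $K^3 \cup K^5$, the only powers of $\Delta$ attaining $\tfrac14$ are $\Delta^3$ and $\Delta^5$ themselves. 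The remaining case is $i \in \{1,7\}$, i.e.\ $n \equiv 1,7 \pmod 8$ with $n$ odd: here \cref{item:densityleq1/4} gives the strict inequality $\bel(\Delta^n) < \tfrac14$ with no exceptions, so no such power attains $\tfrac14$. Hence $\bel(\Delta^n) = \tfrac14$ iff $n \in \{3,5\}$, as claimed.

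The main obstacle I anticipate is being careful about the distinction between the abstract element $\Delta^n \in K$ and its role in applying \cref{item:densityleq1/4}: that theorem excepts precisely $\Delta^3$ and $\Delta^5$ as specific elements, so I must confirm that for $n \equiv 3,5 \pmod 8$ with $n > 5$ the form $\Delta^n$ is genuinely a \emph{different} element of $K^i$ from $\Delta^3$ or $\Delta^5$ and therefore falls under the strict inequality. This is clear since the odd powers of $\Delta$ are linearly independent (they form the declared basis of $K$), so $\Delta^n = \Delta^3$ in $K$ only when $n=3$, and similarly for $5$. With this linear-independence fact in hand, every case is forced and the corollary follows; no genuinely new argument beyond \cref{item:densitygeq0,item:densityleq1/4} and the squaring observation is required.
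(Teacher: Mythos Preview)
Your proof is correct and is exactly the argument the paper has in mind: the corollary is stated without proof as an immediate consequence of \cref{item:densitygeq0,item:densityleq1/4}, together with the observation (made just before the definition of $K$) that even powers are squares and hence have all odd-indexed coefficients zero. One small wording issue: for even $n$ the form $\Delta^n$ does not lie in $K$ (which by definition is spanned by \emph{odd} powers), so the sentence ``$\Delta^n \in K^i$'' is not literally true in that case; but since you handle even $n$ separately via the squaring argument anyway, this does not affect the logic.
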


Recall that a form in $K$ is called special if it is in the span of abelian and dihedral forms. 
Bellaïche has proved formulas for $\bel(f)$ for a basis of special forms, although this work has never been published. In \cref{bellaiche-density} in \cref{densitydihedral} we will give his proof for $\bel(f)$ for dihedral forms (which we will eventually use to prove \cref{thm:main-thm}). From the expression for $\bel(f)$ for dihedral forms, it will already be clear that special forms may have arbitrarily small densities. On the other hand, the a priori expectation is that $\bel(f)$ for all nonspecial forms $f$ is uniformly bounded below.\footnote{In \cite{Bim} Bellaïche has proved the existence of such a uniform bound for forms outside of an exceptional set modulo~$p \geq 3$. But note that his exceptional set is in general bigger than the span of the abelian and dihedral forms.} In fact, Bellaïche has informally conjectured\footnote{in talks (for example, {\scriptsize \url{https://people.brandeis.edu/~jbellaic/mod2columbiabeamer.pdf}}) and private correspondences} the following.
\begin{expectation}[Bellaïche]\label{18}
If $f \in K$ is graded and nonspecial, then $\bel(f) = \frac{1}{8}.$
\end{expectation}
Because of the $(\ZZ/8\ZZ)^\times$-grading, a density of $\frac{1}{8}$ for a graded form is in fact an equistribution result.

\subsection{Densities of dihedral modular forms modulo 2} \label{densitydihedral}

The purpose of this subsection is to present a proof of the Bellaïche's unpublished density computation for a basis of dihedral forms in $K$. Recall that $x = T_3$ and $y = T_5$ in the Hecke algebra $A$, and $\{m(a, b)\}_{a, b}$ is the basis adapted to $x$ and $y$ in the sense \cref{def:adaptedbasis}, and using the shorthand convention of \eqref{eqn:mabdef}.

We set 
\begin{align*}
    d = d(a) &= \text{number of digits of $a$ in base $2$} \,, \\
    v = v(a) &= \text{2-adic valuation of $a$} \,, \\
    z = z(a) &= \text{number of 0s in $a$ base 2} \,, \\
    u = u(a) &= \text{number of 1s in $a$ base 2} \,.
\end{align*}
Note that $d = z + u.$

\begin{theorem}[Bellaïche, unpublished]\label{bellaiche-density} For any $a \geq 0$
	$$\delta\big(m(a,0)\big) = \delta\big(m(0,a)\big) = \frac{1}{2^{u(a)+v(a)+1}}.$$
\end{theorem}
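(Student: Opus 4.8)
The plan is to convert $\delta\big(m(a,0)\big)$ into an element count on a finite dihedral Galois group and then read off the answer from the binomial-parity combinatorics of \cref{combinatorial-lemma}. First I would unwind the duality of \cref{subsec:duality}. Since $m(a,0)$ is the linear form on $A = \FF_2\llb x, y\rrb$ dual to $x^a = T_3^a$, its annihilator is $(T_5,\, T_3^{a+1})$, so $A_{m(a,0)} = \FF_2[x]/(x^{a+1})$ and $m(a,0)^\ast(T)$ equals the coefficient of $x^a$ in the image $\bar T$ of $T$ in this quotient. By \eqref{density-calculation-with-chebotarev}, $\delta\big(m(a,0)\big)$ is then the proportion of $h \in G_{m(a,0)} = \Gal(L_{m(a,0)}/\QQ)$ with $[x^a]\,t_{m(a,0)}(h) = 1$. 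The crucial normalization is to use the pinning $e = (\Frob_3, c)$: since Serre's presentation allows the generator to be \emph{any} element of $G^3$, and $t(\Frob_3) = T_3 = x$, the graded parameters attached to $e$ are $(x, Y)$ with $Y = t(c\Frob_3) \in A^5$. By the invariance of adapted bases \cref{invariance-of-basis-under-pinning}, $m_e(a,0) = m_{(x,Y)}(a,0) = m_{(x,y)}(a,0) = m(a,0)$, so I may compute $t$ on powers of $\sigma \coloneqq \Frob_3$ with $t(\sigma) = x$ on the nose, with no change-of-parameter correction.

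Next I would exploit the dihedral structure. As $m(a,0)$ is a dihedral form, $G_{m(a,0)}$ is a finite dihedral $2$-group whose cyclic rotation part is generated by the image of $\sigma$, of $2$-power order $2^N$ (with $2^N$ the least power of two exceeding $a$), the reflection coset being represented by $c$. On rotations, the trace--determinant identity \cref{cond:tr-det} gives the recursion $t(\sigma^{k+1}) = x\,t(\sigma^k) + t(\sigma^{k-1})$ with $t(\sigma^0) = 0$ and $t(\sigma^1) = x$, so $t(\sigma^k) = D_k(x)$, the mod-$2$ Dickson (modified Chebyshev) polynomial defined by $D_0 = 0$, $D_1 = x$, $D_{k+1} = x D_k + D_{k-1}$. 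On reflections, centrality of $t$ forces $t$ to be constant on each of the two reflection conjugacy classes, represented by $c$ and $c\Frob_3$; here $t(c) = 0$ and $t(c\Frob_3) = Y \in A^5 \subseteq (y)$, both of which vanish in $A_{m(a,0)} = A/(y, x^{a+1})$. Hence every reflection contributes $0$, only the $2^N$ rotations can satisfy $[x^a]\,t(\cdot) = 1$, and
\[
\delta\big(m(a,0)\big) = \frac{\#\{k \in \ZZ/2^N \mid [x^a] D_k(x) = 1\}}{2^{N+1}} = \frac{1}{2}\cdot\frac{\#\{k \bmod 2^N \mid [x^a] D_k(x) = 1\}}{2^N}.
\]

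Finally I would evaluate the count. The coefficient $[x^a] D_k(x)$ is the reduction mod $2$ of the Dickson coefficient $\frac{k}{k-j}\binom{k-j}{j}$ with $j = (k-a)/2$, a parity of binomial coefficients; \cref{combinatorial-lemma} then yields $\#\{k \bmod 2^N \mid [x^a] D_k(x) = 1\} = 2^{\,N - u(a) - v(a)}$, so the displayed identity collapses to $2^{-(u(a)+v(a)+1)}$, as claimed (with the convention $v(0) = \infty$ accounting for $m(0,0) = \Delta$ of density $0$). The equality $\delta\big(m(0,a)\big) = \delta\big(m(a,0)\big)$ then follows from the symmetry exchanging the two graded parameters, i.e.\ running the identical argument with the $5$-pinning $(\Frob_5, c)$ and the quotient $\FF_2[y]/(y^{a+1})$. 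I expect the main obstacle to be the bookkeeping in the reduction to \cref{combinatorial-lemma}: one must verify that the dihedral quotient genuinely has cyclic part of $2$-power order with $t(\sigma^k) = D_k(x)$ exactly as written, and that the parity of the Dickson coefficient of $x^a$ is precisely the quantity the combinatorial lemma counts. Once the Dickson recursion and the vanishing on reflections are in place, the remainder is the already-established binomial-parity count.
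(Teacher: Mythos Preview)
Your proposal is correct and follows essentially the same route as the paper: fix the $3$-pinning $e=(\Frob_3,c)$, use \cref{invariance-of-basis-under-pinning} to identify $m_e(a,0)=m(a,0)$, pass to $A_f=\FF_2[x]/(x^{a+1})$, show $t_f$ vanishes on reflections and equals $\overline S_k(x)$ on rotations, and finish with \cref{combinatorial-lemma}. The only tactical differences are that the paper establishes the dihedral structure by computing $\ker t_f$ directly (\cref{kerdi}) rather than invoking the dihedral classification, and proves $t_f(cg^n)=0$ by an induction (the paper's Lemma~4.8) rather than your conjugacy-class argument; both are valid, and your observation that the exact order $2^N$ of the rotation subgroup is immaterial for the density is in fact sharper than the paper's bound $2^{d+1}$, which it acknowledges is only needed as a containment.
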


We will prove only the case $m(a,0)$ and the proof for $m(0,a)$ is completely analogous. We fix the pinning~$e=(g,c)$ with $g = \Frob_3$ and $c$ is complex conjugation. Thus, we set $X=x = t(g)$ and~$Y=t(cg)$ and by 
\cref{prop:adaptedbasis} there exists a basis $m_e(a,b)$ adapted to the pinning $e=(g,c).$ From \cref{invariance-of-basis-under-pinning}, we know that \( m_e(a, 0) = m(a, 0) \) for all \( a \geq 0 \).  Now consider $f = m_e(a,0) = m(a,0)$ and $\ann(f) = (X^{a+1},Y)$ and we find for the faithful Hecke algebra quotient
\begin{equation}\label{eqn:af}
A_f = \mathbb{F}_2[x]/(x^{a+1}) \,.
\end{equation}
We compute the Galois group, which is generally given by
$$G_f = \langle g, c \mid c^2 = 1 \rangle / \ker t_f\,.$$

The following is a slight refinement of  \mbox{\cite[Lemma 2]{Bmem}}, to be used to determine $\ker t_f$. 
\begin{lemma}[Bellaïche+$\varepsilon$]\label{sq}Let $B$ be a ring of characteristic 2, and $(t, d): \Gamma \to B$  a pseudorepresentation. Suppose $\gamma \in \Gamma$. If $t(\gamma) = 0$, then $\gamma^2 \in \ker t.$ Conversely, $\gamma^2 \in \ker t$ implies that $t(\gamma)$ annihilates the ideal generated in $B$ by the image $t(\Gamma)$.
\end{lemma}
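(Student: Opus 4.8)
\textbf{The plan} is to extract both implications of the lemma from a single specialization of the trace–determinant identity \cref{cond:tr-det}. Working in the characteristic-$2$ ring $B$, that identity reads $t(gh) + t(g^{-1}h) = t(g)\,t(h)$. I would apply it with $g = \gamma$ and with the second argument taken to be $\gamma h$ for an arbitrary $h \in \Gamma$; since $\gamma^{-1}(\gamma h) = h$, this produces the master identity
\[
t(\gamma^2 h) + t(h) = t(\gamma)\, t(\gamma h) \qquad \text{for all } h \in \Gamma.
\]
Both directions are then read off from this single relation, using $a + a = 0$ in characteristic~$2$ together with the defining property that $\gamma^2 \in \ker t$ means exactly $t(\gamma^2 h) = t(h)$ for all $h \in \Gamma$.

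For the forward implication, suppose $t(\gamma) = 0$. Then the right-hand side of the master identity vanishes, so $t(\gamma^2 h) = t(h)$ for every $h \in \Gamma$, which is precisely the assertion $\gamma^2 \in \ker t$. For the converse, suppose $\gamma^2 \in \ker t$, so $t(\gamma^2 h) = t(h)$ for all $h$; then the left-hand side collapses to $t(h) + t(h) = 0$, giving $t(\gamma)\,t(\gamma h) = 0$ for every $h \in \Gamma$. As $h$ ranges over $\Gamma$ the element $\gamma h$ ranges over all of $\Gamma$, hence $t(\gamma)\, t(g) = 0$ for every $g \in \Gamma$. Since the ideal generated by the image $t(\Gamma)$ is exactly the $B$-span of the elements $t(g)$, multiplication by $t(\gamma)$ kills every generator and therefore the whole ideal, which is the claim.

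I do not anticipate a serious obstacle: the only genuine step is spotting the correct substitution $(\gamma,\gamma h)$ so that the inverse term $t(\gamma^{-1}\cdot \gamma h)$ collapses to $t(h)$. The points I would verify carefully are the characteristic-$2$ bookkeeping — in particular that $t(1) = 2 = 0$, which surfaces if one instead specializes $h = \gamma$ to recover $t(\gamma^2) = t(\gamma)^2$ — and the elementary observation that annihilating the ideal $\big(t(\Gamma)\big)$ is equivalent to annihilating each generator $t(g)$, which is immediate for an ideal generated by a set of elements. The ``$+\varepsilon$'' refinement over \cite[Lemma 2]{Bmem} is precisely the converse direction, and it falls out of the same identity with no extra work.
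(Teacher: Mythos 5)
Your proof is correct and follows essentially the same route as the paper's: both apply the trace--determinant identity to the pair $(\gamma, \gamma u)$ to obtain the master relation $t(\gamma^2 u) + t(u) = t(\gamma)\,t(\gamma u)$, from which the forward direction and the converse are read off exactly as you describe. The only differences are cosmetic — you spell out that $\gamma h$ ranges over all of $\Gamma$ and that killing the generators $t(g)$ kills the ideal they generate, points the paper leaves implicit.
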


\begin{proof}
For any $u \in \Gamma$, the trace-determinant identity (see \cref{cond:tr-det}) for the pair $(\gamma, \gamma u)$ gives 
    $$t(\gamma^2 u) - t(u) = t(\gamma) t(\gamma u).$$
Therefore, if $t(\gamma) = 0$, then  $t(\gamma^2 u) = t(u)$ for all $u \in \Gamma$: in other words~$\gamma^2 \in \ker u$. And conversely,~$\gamma^2 \in \ker u$ implies that $t(\gamma) t(\gamma u) = 0$ for every $u \in \Gamma$. In other words $t(\gamma)$ annihilates the image $t(\Gamma),$ as claimed.
\end{proof}

We also introduce modified Chebyshev polynomials (of the first kind), in order to be able to describe the image of $t$ on large cyclic subgroups.  

\begin{lemma}
    For all $n \in \ZZ$ there exists a unique polynomial $S_n \in \Z[x]$ with the property 
    \begin{equation}\label{chubby}S_n(x + x^{-1}) = x^n + x^{-n}.
    \end{equation}

    \begin{proof}
        For $n=0$, we need $S_0(x + x^{-1}) = 2$, so $S_0(x) = 2$ identically.  Since \( S_n(x) \) is a polynomial of some degree, comparison of coefficients in \( S_n(x + x^{-1}) \overset{!}{=} x^n + x^{-n} \) shows that \( S_n(x) \) has degree exactly~\( n \).  Moreover, this leads to a triangular system of equations for \( a_i \), in 
        \[
        S_n(x + x^{-1}) = \sum_{i=0}^{n} a_i (x + x^{-1})^i  \overset{!}{=} x^n + x^{-n} \,.
        \]
        This uniquely determines \( S_n(x) \) for $n \geq 0$. For $n < 0$ we set $S_n(x) \coloneqq S_{-n}(x).$
    \end{proof}
\end{lemma}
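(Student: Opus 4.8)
The plan is to reduce to $n \geq 0$ using the evident symmetry, to prove existence by a three-term recurrence, and to extract uniqueness from the injectivity of the substitution $y \mapsto x + x^{-1}$. Since the Laurent polynomial $x^n + x^{-n}$ is invariant under $n \mapsto -n$, I would simply declare $S_{-n} \coloneqq S_n$ once the case $n \geq 0$ is settled, so it suffices to treat nonnegative $n$. Write $y = x + x^{-1}$ and regard each $S_n$ as a polynomial in the single variable $y$. Taking $S_0 \coloneqq 2$ and $S_1 \coloneqq y$ verifies \eqref{chubby} for $n = 0, 1$, and the elementary identity
\[
(x^n + x^{-n})(x + x^{-1}) = (x^{n+1} + x^{-(n+1)}) + (x^{n-1} + x^{-(n-1)})
\]
shows that the sequence $S_n$ is forced to satisfy the recurrence $S_{n+1} = y\,S_n - S_{n-1}$. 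Defining $S_n$ by this recurrence gives, by an immediate induction, polynomials in $\ZZ[y]$ that are monic of degree exactly $n$ and satisfy \eqref{chubby}; these are the modified Chebyshev polynomials. This settles existence, and records the degree and leading term for free.

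For uniqueness I would argue that the substitution homomorphism $\varphi \colon \ZZ[y] \to \ZZ[x, x^{-1}]$, $y \mapsto x + x^{-1}$, is injective. If $P = \sum_{i=0}^{d} a_i y^i$ is nonzero with $a_d \neq 0$, then the highest power of $x$ occurring in $\varphi(P)$ is $x^d$, and it is contributed only by the leading term, with coefficient $a_d \neq 0$; hence $\varphi(P) \neq 0$. Therefore any two polynomials in $\ZZ[y]$ mapping to $x^n + x^{-n}$ have difference in $\ker \varphi = \{0\}$ and so coincide, which gives the asserted uniqueness.

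I expect no real obstacle: the statement is elementary and the two ingredients --- the recurrence for existence and the leading-term computation for injectivity --- are both routine. The only points deserving a moment's care are that the leading-term argument is what upgrades ``uniqueness up to the relations satisfied by $x + x^{-1}$'' to genuine uniqueness in $\ZZ[y]$ (that is, that $x + x^{-1}$ is transcendental over $\ZZ$), and the trivial bookkeeping needed to extend the definition and both properties to negative $n$.
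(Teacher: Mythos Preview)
Your proof is correct. The organization differs from the paper's: the paper handles existence and uniqueness in one stroke by observing that expanding $\sum_{i=0}^n a_i (x+x^{-1})^i$ and matching against $x^n+x^{-n}$ yields a triangular linear system for the $a_i$ (with unit diagonal, giving integrality), whereas you separate the two halves, using the three-term recurrence $S_{n+1}=yS_n-S_{n-1}$ for existence and the injectivity of $y\mapsto x+x^{-1}$ for uniqueness. Your leading-term argument for injectivity is exactly what makes the paper's system triangular, so the underlying mechanism is the same. Your route has the mild advantage that it produces the recurrence directly, which the paper records and uses immediately after the lemma anyway; the paper's route is a shade quicker since one computation does both jobs.
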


\begin{remark} From the definition in \eqref{chubby}, one can easily deduce that  
$S_n(x) = 2 \mathcal T_n ( x )$, where~$\mathcal T_n(x)$ is the Chebyshev polynomial of the first kind, which satisfies
$\mathcal T_n(\cos \theta) = \cos n\theta$. 
\end{remark}

The first few Chebyshev polynomials are readily computed to be
\begin{align*}
    & S_0(x) = 2 \,, \quad S_1(x) = x \,, \quad  S_2(x) = x^2 - 2 \,, \quad S_3(x) = x^3 - 3x \,, \\
    & S_4(x) = x^4 - 4x^2 + 2 \,,\quad  S_5(x) = x^5 - 5x^3 + x \,, \quad  \ldots
\end{align*}

From the definition expression in \eqref{chubby}, one can see that the sequence of polynonomial \( S_n(x) \) satisfies the order-2 recurrence relation \( S_n(x) = x S_{n-1}(x) - S_{n-2}(x)  \) for all $n$.

\begin{prop}\label{chebychev-computation-for-g}
    If $t: \Gamma \rightarrow B$ is a pseudorepresentation of group $\Gamma$ on ring $B$ in the sense of \cref{trace-determinant-identity}, then 
\begin{equation}\label{chebyshev}
t(g^n) = S_n(t(g))
\end{equation}
for all $g \in \Gamma$ and $n \in \Z$.

\begin{proof}
    One checks that both sides agree for $n = 0, 1$, and appeals to the fact that both \( S_n \) and \( t(g^n) \) satisfy an order 2 recurrence relation. For $t(g^n)$ this follows from the trace-determinant identity.
\end{proof}
\end{prop}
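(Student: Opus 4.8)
The plan is to establish the identity $t(g^n) = S_n(t(g))$ by a clean induction on $n \geq 0$, leveraging the fact that \emph{both} sides satisfy the same order-$2$ linear recurrence and agree at the two base cases. First I would verify the base cases: for $n = 0$ we have $g^0 = 1$, so $t(g^0) = t(1) = 2$ by \cref{trace-determinant-identity}(i), while $S_0(x) = 2$ identically, giving $S_0(t(g)) = 2$; and for $n = 1$ we trivially have $t(g^1) = t(g) = S_1(t(g))$ since $S_1(x) = x$.

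The heart of the argument is the inductive step, where I would show that $t(g^n)$ obeys the recurrence $t(g^n) = t(g)\,t(g^{n-1}) - t(g^{n-2})$. This is exactly where the defining property of a pseudorepresentation enters. Applying the trace-determinant identity from \cref{cond:tr-det} to the pair $(g, g^{n-1})$ gives
\[
t\big(g \cdot g^{n-1}\big) + t\big(g^{-1} \cdot g^{n-1}\big) = t(g)\,t(g^{n-1}),
\]
that is, $t(g^n) + t(g^{n-2}) = t(g)\,t(g^{n-1})$, which rearranges to precisely the order-$2$ recurrence I want. Since the polynomials $S_n$ satisfy $S_n(x) = x\,S_{n-1}(x) - S_{n-2}(x)$ (as recorded just before the proposition), induction closes: assuming $t(g^{n-1}) = S_{n-1}(t(g))$ and $t(g^{n-2}) = S_{n-2}(t(g))$, we get $t(g^n) = t(g)\,S_{n-1}(t(g)) - S_{n-2}(t(g)) = S_n(t(g))$.

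To handle negative $n$, I would note that $S_n$ was \emph{defined} to satisfy $S_{-n} = S_n$, so it suffices to relate $t(g^{-n})$ to $t(g^n)$. Here I would use centrality together with the trace-determinant identity applied to the pair $(g^n, 1)$: since $t(1) = 2$, the identity $t(g^n \cdot 1) + t(g^{-n} \cdot 1) = t(g^n)\,t(1)$ yields $t(g^n) + t(g^{-n}) = 2\,t(g^n)$, hence $t(g^{-n}) = t(g^n)$. Combined with $S_{-n} = S_n$ and the already-established nonnegative case, this gives $t(g^{-n}) = t(g^n) = S_n(t(g)) = S_{-n}(t(g))$, completing all of $\ZZ$.

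The main obstacle, such as it is, is purely bookkeeping: one must be careful that the trace-determinant identity is being applied to the correct pair of elements so that the inner products $gh$ and $g^{-1}h$ collapse to consecutive powers $g^n$ and $g^{n-2}$, and one must invoke centrality (\cref{trace-determinant-identity}(ii)) to justify that $t(g^{-1}g^{n-1}) = t(g^{n-2})$ without worrying about ordering. There is no genuine analytic or structural difficulty here — the content is entirely that a pseudorepresentation reproduces the trace recurrence of a genuine $\SL_2$-representation, which is exactly what the Chebyshev polynomials $S_n$ encode via $x^n + x^{-n} = S_n(x + x^{-1})$.
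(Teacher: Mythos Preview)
Your proof is correct and follows exactly the approach sketched in the paper: verify the base cases $n=0,1$ and then use the trace--determinant identity (applied to the pair $(g,g^{n-1})$) to show that $t(g^n)$ satisfies the same order-$2$ recurrence as $S_n$. Your treatment of negative $n$ via $t(g^{-n})=t(g^n)$ and $S_{-n}=S_n$ is a clean way to fill in that detail, which the paper leaves implicit.
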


As we work in characteristic $2$, we also introduce the mod-$2$ image $\overline S_n(x) \in \FF_2[X]$. In particular, we have
\begin{lemma}\label{chebychev_mod_2}
    For all $n \in \Z$ we have $\overline{S}_{2n} = \overline{S}_n^2.$
    In particular, $\overline S_{2^k} = x^{2^k}$ for all $k\geq 0.$
\end{lemma}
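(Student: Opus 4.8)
The plan is to prove the two claims of \cref{chebychev_mod_2} in sequence, reducing everything to the defining recurrence and the characteristic-$2$ Frobenius (freshman's dream). First I would establish the identity $\overline S_{2n} = \overline S_n^2$ by exploiting the defining relation \eqref{chubby} together with the fact that squaring is additive and multiplicative modulo $2$. Concretely, working in the ring $\FF_2[x,x^{-1}]$, one has $\overline S_{2n}(x+x^{-1}) = x^{2n} + x^{-2n} = (x^n + x^{-n})^2$, since in characteristic $2$ the cross term $2 x^n x^{-n}$ vanishes and $(u+v)^2 = u^2 + v^2$. But $(x^n+x^{-n})^2 = \big(\overline S_n(x+x^{-1})\big)^2 = \overline S_n^2(x+x^{-1})$. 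Setting $w = x + x^{-1}$, this gives $\overline S_{2n}(w) = \overline S_n^2(w)$ as an identity in $w$ over $\FF_2$, and since $w = x+x^{-1}$ takes infinitely many values (or more cleanly, since the substitution $x \mapsto x+x^{-1}$ is injective on polynomials, as used in the existence/uniqueness lemma for $S_n$), we conclude $\overline S_{2n} = \overline S_n^2$ as polynomials in $\FF_2[x]$.

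For the second claim I would argue by induction on $k$. The base case $k=0$ is $\overline S_1 = x$, which is immediate from $S_1(x) = x$. For the inductive step, assuming $\overline S_{2^k} = x^{2^k}$, apply the first part with $n = 2^k$ to get $\overline S_{2^{k+1}} = \overline S_{2\cdot 2^k} = \overline S_{2^k}^2 = (x^{2^k})^2 = x^{2^{k+1}}$, completing the induction.

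One subtlety I would want to handle carefully is the passage from an identity of the form $\overline S_{2n}(x+x^{-1}) = \overline S_n^2(x+x^{-1})$ to the equality of the underlying polynomials. The cleanest route is to note that the proof of the existence-and-uniqueness lemma for $S_n$ shows the map $p(x) \mapsto p(x+x^{-1})$ is injective on $\FF_2[x]$ (the leading term of $p(x+x^{-1})$, as a Laurent polynomial, recovers the degree and leading coefficient of $p$, and one peels off terms by descending degree). Alternatively, one can simply observe that both $\overline S_{2n}$ and $\overline S_n^2$ are the reductions of integer polynomials satisfying the same interpolation condition, so reducing the integer identity suffices — but the characteristic-$2$ identity $x^{2n}+x^{-2n} = (x^n+x^{-n})^2$ genuinely needs the Frobenius, so I would present the argument directly over $\FF_2$.

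The main obstacle, such as it is, is purely bookkeeping: making the substitution-injectivity step rigorous without belaboring it, since the genuine mathematical content — that squaring is a ring homomorphism in characteristic $2$ and hence commutes with the Chebyshev recursion in the doubled-index direction — is elementary. I do not anticipate any deeper difficulty; the result is essentially the statement that $\overline S_n$ is Frobenius-compatible under index doubling, and the specialization $\overline S_{2^k} = x^{2^k}$ is then a one-line induction.
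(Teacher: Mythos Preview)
Your proposal is correct and follows essentially the same approach as the paper: both use the defining relation \eqref{chubby} together with the Frobenius in characteristic $2$ to get $\overline S_{2n}(x+x^{-1}) = x^{2n}+x^{-2n} = (x^n+x^{-n})^2 = \overline S_n(x+x^{-1})^2$. The paper's proof is a single line that suppresses the substitution-injectivity step you discuss, and it leaves the induction for $\overline S_{2^k}=x^{2^k}$ implicit; your version is simply a more careful write-up of the same argument.
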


\begin{proof}
We have
 \(  \overline{S}_{2n}(x) = x^{2n} + x^{-2n} = \left(x^n+x^{-n}\right)^2 = \overline{S}_{n}^2.
 \)\qedhere
\end{proof}

We now return to studying the dihedral form $f = m(a, 0)$ for $a > 0$. Recall that $d \coloneqq d(a) \geq 1$ denotes the number of digits in $a$ base $2$, so that $2^d \leq a < 2^{d+ 1}.$

\begin{lemma}\label{kerdi} We have $\ker t_f = \langle cgcg, g^{2^{d + 1}} \rangle$. 
\end{lemma}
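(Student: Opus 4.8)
The plan is to identify $\ker t_f$ by pushing $t_f$ down to the infinite dihedral quotient of $G$ and then computing the order of the image of $g$ there; throughout I read the $\langle\,\cdot\,\rangle$ in the statement as the \emph{closed normal} subgroup generated, which is what a kernel must be. Recall from \eqref{eqn:af} that in $A_f = \FF_2[x]/(x^{a+1})$ we have $t_f(g) = x$ and $t_f(cg) = Y = 0$. Since $t_f(cg) = 0$, \cref{sq} gives $(cg)^2 = cgcg \in \ker t_f$ at once. As $\ker t_f$ is closed and normal it then contains the normal closure $\langle\langle cgcg\rangle\rangle$, so $t_f$ factors through $\bar G \coloneqq G/\langle\langle cgcg\rangle\rangle$; in $\bar G$ the relations $c^2 = 1$ and $(cg)^2 = 1$ force $cgc = g^{-1}$, so $\bar G$ is (pro-$2$) infinite dihedral and every element is $g^k$ or $cg^k$.

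Next I would compute $t_f$ on all of $\bar G$. \cref{chebychev-computation-for-g} gives $t_f(g^k) = \overline S_k(x)$. For the reflections, applying the trace--determinant identity \cref{cond:tr-det} to the pair $(g,\,cg^{k-1})$ and simplifying $gcg^{k-1} = cg^{k-2}$ and $g^{-1}cg^{k-1} = cg^{k}$ (valid in $\bar G$) yields the recurrence $t_f(cg^k) = x\,t_f(cg^{k-1}) + t_f(cg^{k-2})$ with $t_f(c) = 0$ and $t_f(cg) = 0$; hence $t_f(cg^k) = 0$ for every $k$. Consequently no $cg^k$ lies in $\ker t_f$ (taking $h = g$ in the kernel condition gives $t_f(cg^{k+1}) = 0 \neq x = t_f(g)$), so $\ker t_f \cap \bar G$ consists only of powers of $g$, with $g^k \in \ker t_f \iff \overline S_{k+j} \equiv \overline S_j \pmod{x^{a+1}}$ for all $j$. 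Thus $\ker t_f \cap \langle g\rangle = \langle g^P\rangle$, where $P$ is the period of the sequence $(\overline S_k \bmod x^{a+1})_k$, equivalently the order of $g$ in $G_f$.

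The crux is to show $P = 2^{d+1}$. Here I would combine $\overline S_{2^k} = x^{2^k}$ (\cref{chebychev_mod_2}) with the product identity $\overline S_m\overline S_n = \overline S_{m+n} + \overline S_{m-n}$, an immediate consequence of the defining relation \eqref{chubby}. Together these give $\overline S_{2^k + j} = x^{2^k}\overline S_j + \overline S_{2^k - j}$, so the ``reflection about $2^k$'' identity $\overline S_{2^k + j} \equiv \overline S_{2^k - j}$ (all $j$) holds exactly when $x^{2^k}$ kills the maximal ideal $(x)$ of $A_f$, i.e.\ when $x^{2^k+1} = 0$, i.e.\ when $2^k \ge a$. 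Letting $d$ be as in the statement, characterized by $2^{d-1} < a \le 2^d$ (the least $d$ with $2^d \ge a$): reflection about $2^d$ holds, and composing it with the automatic reflection $\overline S_{-j} = \overline S_j$ about $0$ shows $\overline S_{2^{d+1}+j} \equiv \overline S_j$, so $g^{2^{d+1}} \in \ker t_f$. For minimality, were $g^{2^d}$ in $\ker t_f$, then $\overline S_{2^d + j} \equiv \overline S_j$ combined with reflection about $2^d$ would force reflection about $2^{d-1}$, whence $2^{d-1} \ge a$, a contradiction. So the order of $g$ divides $2^{d+1}$ but not $2^d$, giving $P = 2^{d+1}$.

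Finally I would assemble the pieces: in $\bar G$ one has $\ker t_f = \langle \bar g^{2^{d+1}}\rangle$ (a normal subgroup of $\bar G$), and pulling back along $G \twoheadrightarrow \bar G$ gives $\ker t_f = \langle\langle cgcg\rangle\rangle\cdot\langle g^{2^{d+1}}\rangle$, which is exactly the closed normal subgroup $\langle cgcg,\, g^{2^{d+1}}\rangle$, as claimed. I expect the genuine difficulty to be the sharp order computation: that $\overline S_{2^{d+1}} \equiv 0$ is immediate but only yields periodicity up to $2^{d+2}$, and extracting the exact order $2^{d+1}$ requires the finer annihilation dichotomy $x^{2^d}(x) = 0$ versus $x^{2^{d-1}}(x) \neq 0$ --- precisely where the size of $a$ relative to powers of $2$ (the base-$2$ digit data governing $d$) enters. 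A secondary point needing care is the normal-closure reading of $\langle\,\cdot\,\rangle$: the plain subgroup $\langle cgcg, g^{2^{d+1}}\rangle$ is in general strictly smaller than $\ker t_f$, so the equality must be understood for the closed normal subgroup generated.
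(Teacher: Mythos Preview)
Your proof is correct and establishes the containment $\ker t_f \supseteq \langle cgcg, g^{2^{d+1}}\rangle$ exactly as the paper does, via \cref{sq}. The sharpness argument, however, is genuinely different. The paper argues group-theoretically: knowing $G_f$ is a quotient of $D_{2^{d+1}}$, it rules out proper quotients by checking that $c$, $cg$, and $g^{2^d}$ are not in $\ker t_f$, the last via the \emph{converse} direction of \cref{sq} (if $g^{2^d}\in\ker t_f$ then $x^{2^{d-1}}$ annihilates $(x)$). You instead compute the period of $(\overline S_k \bmod x^{a+1})_k$ directly, introducing the product identity $\overline S_m\overline S_n=\overline S_{m+n}+\overline S_{m-n}$ and the resulting reflection criterion ``$\overline S_{2^k+j}\equiv\overline S_{2^k-j}$ for all $j$ iff $2^k\ge a$''. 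This is more hands-on but entirely self-contained, and along the way you prove $t_f(cg^k)=0$ for all $k$, which the paper defers to the next lemma. One caveat: your characterization $2^{d-1}<a\le 2^d$ is not the paper's $d$ (number of base-$2$ digits, so $2^{d-1}\le a<2^d$); the two differ exactly when $a$ is a power of $2$. Your convention yields the sharp kernel, whereas with the paper's $d$ the equality in the lemma actually fails at $a=2^{d-1}$ (there $g^{2^d}$ \emph{is} in the kernel) --- the paper sidesteps this by remarking that only the containment is needed for \cref{bellaiche-density}.
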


(In fact, for the proof of \cref{bellaiche-density} the containment $\ker t_f \supseteq \langle cgcg, g^{2^{d + 1}} \rangle$ suffices.)

\begin{proof}
By \eqref{eqn:af} we have $A_f = \FF_2[ X ]/(X^{a + 1}).$
It follows that $t_f(cg) \equiv Y \pmod{\mm^2}$ is zero in $A_f$, so that by \cref{sq} we have $cgcg \in \ker t_f$. Similarly, $t_f(g^{2^d}) = \overline S_{2^d}(t_f(g)) = (t_f(g))^{2^d} = X^{2^d} = 0$, where the first equality is \cref{chebychev-computation-for-g}, the second equality is  \cref{chebychev_mod_2}), and the last follows because $2^d \leq a + 1$ since $2^d \leq a < 2^{d+1}$ by definition. By \cref{sq} again, we have $g^{2^{d+1}} \in \ker t_f$. Therefore $\ker t_f$ contains $\langle cgcg, g^{2^{d + 1}} \rangle$, so that $G_f$ is a quotient of a group with the presentation  $\langle g, c \mid c^2, (cg)^2, g^{2^{d + 1}}\rangle$: in other words, $G_f$ is a quotient of $D_{2^{d+1}}$, the dihedral group with $2^{d + 2}$ elements. 
In order to show equality we establish that $c$, $cg$, or $g^{2^d}$ are in the kernel of $t_f$.
\begin{itemize}[parsep=1ex, itemindent=2em, labelwidth=2em]
    \item[\fbox{\bf Case $c$\,\llap{\phantom{g}}}] Since $X = t_f(g) \neq t_f(gc) = Y = 0$, we have $c \not\in \ker t_f$. (Here we use $a > 0$, so that $f \neq \Delta$.)
    \item[\fbox{\bf Case $cg$}] Since $0 = t_f(c) \neq t_f(ccg) = t_f(g) = X$, we have $cg \not\in\ker t_f$. (Again, we use $a > 0$ here.)
    \item[\fbox{\bf Case $g^{2^d}$}] By \cref{sq}, if $g^{2^d} \in \ker t_f$, then $t_f(g^{2^{d-1}})$ annihilates the ideal $(X)$ of $A_f.$ (Recall yet again that $a \geq 1$, so that $d \geq 1$.) By \cref{chebychev-computation-for-g} and \cref{chebychev_mod_2} again, we know that $t_f(g^{2^{d-1}}) = X^{2^{d-1}}.$ In other words, the statement $g^{2^d} \in \ker t_f$ implies that $X^{2^{d-1} + 1} = 0$ in $A_f$, or, equivalently, $2^{d-1} \leq a$. But this contradicts the assumption on $d$. \qedhere
\end{itemize}
\end{proof}

We  finally find
\begin{equation}\label{gfdih}G_f = \langle g, c \mid cgc = g^{-1}, c^2 =1, g^{2^{d+1}} = 1 \rangle\,.
\end{equation}
Therefore, $G_f$ is a dihedral group of order $2^{d+2}$. 
We now study the image of $t_f$ on this dihedral group.

\begin{lemma}
    For all $n \geq 0$ we have $t_f(cg^n) = 0$.
\end{lemma}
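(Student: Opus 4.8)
The plan is to exploit the dihedral relation $cgc = g^{-1}$ from \eqref{gfdih} together with the trace-determinant identity (\cref{cond:tr-det}) to set up a two-term linear recurrence in $n$ for the quantities $t_f(cg^n)$, and then to solve it starting from two base cases. The two base values I would record first are
\[
t_f(cg^0) = t_f(c) = 0
\qquad\text{and}\qquad
t_f(cg^1) = t_f(cg) = Y = 0 \,,
\]
the first because complex conjugation satisfies $t(c) = 0$ (as used in the proof of \cref{item:densityleq1/4}), and the second because $Y \in \ann(f) = (X^{a+1}, Y)$, so that $Y$ vanishes in $A_f = \FF_2[x]/(x^{a+1})$.

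\textbf{Deriving the recurrence.} For $n \geq 2$ I would apply the trace-determinant identity $t(gh) + t(g^{-1}h) = t(g)t(h)$ with $h = cg^{n-1}$. Using $gc = cg^{-1}$ (equivalently $g^{-1}c = cg$, both immediate consequences of $cgc = g^{-1}$ and $c^2 = 1$), one simplifies the two arguments on the left:
\[
g \cdot cg^{n-1} = cg^{-1}g^{n-1} = cg^{n-2} \,,
\qquad
g^{-1}\cdot cg^{n-1} = cg\cdot g^{n-1} = cg^{n} \,.
\]
Hence the identity becomes $t_f(cg^{n-2}) + t_f(cg^{n}) = t_f(g)\, t_f(cg^{n-1})$, i.e.\ in characteristic $2$,
\[
t_f(cg^{n}) = t_f(g)\, t_f(cg^{n-1}) + t_f(cg^{n-2}) \,.
\]
An immediate induction on $n$ now closes the argument: both base terms vanish, and if $t_f(cg^{n-1}) = t_f(cg^{n-2}) = 0$ then the recurrence forces $t_f(cg^n) = 0$ as well.

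\textbf{Main obstacle and an alternative.} The only delicate point is the group-theoretic bookkeeping --- correctly reducing $g\cdot cg^{n-1}$ and $g^{-1}\cdot cg^{n-1}$ via the dihedral relation so that the right indices $n-2$ and $n$ appear; once this is verified the recurrence and its solution are routine. As a cross-check (or an alternative proof) I would note that $t_f$ is a class function, since centrality gives $t_f(hgh^{-1}) = t_f(gh^{-1}\cdot h) = t_f(g)$; and in the dihedral group $G_f$ of \eqref{gfdih} the reflections $cg^n$ fall into exactly two conjugacy classes (conjugation by $g^k$ sends $cg^n$ to $cg^{n-2k}$, preserving the parity of $n$), represented by $c$ and $cg$. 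Since $t_f(c) = t_f(cg) = 0$, every $t_f(cg^n)$ vanishes. Either route yields the claim.
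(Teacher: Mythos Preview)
Your main argument is correct and follows the same strategy as the paper: establish the base cases $t_f(c)=0$ and $t_f(cg)=0$, then induct via the trace--determinant identity. Your execution is in fact slightly cleaner than the paper's, which applies the identity twice (first to $(g,cg^n)$, then to $(g^{-1},cg^n)$) before invoking the dihedral relation, whereas you use $gc=cg^{-1}$ up front and obtain the two-term recurrence $t_f(cg^n)=t_f(g)\,t_f(cg^{n-1})+t_f(cg^{n-2})$ in a single step.

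Your alternative via conjugacy classes is a genuinely different and more conceptual route that the paper does not take: since $t_f$ is central it is a class function on $G_f$, and in the dihedral group of order $2^{d+2}$ (with $d\geq 1$) the reflections $cg^n$ split into exactly the two classes represented by $c$ and $cg$, both of which have trace zero. This argument avoids the recurrence entirely and makes the dihedral structure do all the work; it is arguably the ``right'' proof, though it does rely on having already identified $G_f$ precisely in \eqref{gfdih}, whereas the inductive argument only needs the relation $cgc=g^{-1}$ to hold in $G_f$ (which follows from $cgcg\in\ker t_f$).
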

\begin{proof}
We proceed by induction on $n.$ We first establish the two base cases $n=0$ and $n=1.$
By definition we have $t_f(cg) = Y = 0 \in A_f.$ Furthermore, we have by the trace determinant identity for $t$ that
$$0=t(1)=t(c^2)=t(c)^2+t(c^2)=t(c)^2+t(1)=t(c)^2.$$
Thus, in $A$ we have the equation $t(c)^2 = 0.$ Since $A=\mathbb{F}_2\llb X,Y\rrb$ is an integral domain it follows that $t(c) = 0$ and thus $t_f(c) = 0.$ Now assume that theorem holds for all integers $\leq n$.

From the trace-determinant identity (\cref{cond:tr-det})
$$t_f(cg^{n+1}) = t_f(g)t_f(cg^n) + t_f(g^{-1}cg^n).$$
By assumption the first term vanishes. Applying the same identity again to the last term we obtain
$$t_f(g^{-1}cg^n) = t_f(g^{-1})t_f(cg^n) + t_f(gcg^n).$$
Again the first term vanishes. For the final term we have using the relation $g^{-1} = cgc$ that
$$t_f(gcg^n) = t_f(gcgc^2g^{n-1}) = t_f(cg^{n-1}) = 0.$$
Therefore, $t_f(cg^{n+1})=0$, as claimed.    
\end{proof}

Therefore, we can immediately see that $0 \leq \bel(f) \leq \frac{1}{2}$. If we denote by $[x^a]S_n$ the coefficient of $x^a$ in $S_n$ the upshot is the following key Lemma.

\begin{lemma}
For all $a \geq 0$ and $n\geq 0$ we have that for the forms $f=m(a,0)$
$$f^\ast  t_f(g^n) = [x^a]S_n.$$
\end{lemma}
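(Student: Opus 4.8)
The plan is to unwind the definition of $f^\ast$ on the basis adapted to the pinning and then use the Chebyshev description of $t_f(g^n)$ from \cref{chebychev-computation-for-g}. Recall that $A_f = \FF_2[x]/(x^{a+1})$ by \eqref{eqn:af}, and that $f = m(a,0) = m_e(a,0)$. By the duality of \cref{subsec:duality} together with the defining property of the adapted basis (\cref{def:adaptedbasis}), the linear form $f^\ast$ dual to $m_e(a,0)$ reads off the coefficient of $X^a = x^a$ in any element of $A_f$ expressed in the parameter $X = t(g)$. That is, for $T = \sum_i n_i x^i \in A_f$, we have $f^\ast(T) = n_a$, the coefficient of $x^a$.

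First I would apply \cref{chebychev-computation-for-g}, which gives $t_f(g^n) = S_n(t_f(g)) = \overline{S}_n(x)$ inside $A_f = \FF_2[x]/(x^{a+1})$, where $\overline{S}_n$ is the mod-$2$ reduction of the modified Chebyshev polynomial. Then I would combine this with the computation of $f^\ast$ in the previous paragraph: applying $f^\ast$ to $t_f(g^n) = \overline{S}_n(x)$ extracts the coefficient of $x^a$ in $\overline{S}_n(x)$ modulo $x^{a+1}$. Since $\overline{S}_n$ has degree $n$, and we only care about the coefficient of $x^a$, reduction modulo $x^{a+1}$ does not affect that coefficient. Hence $f^\ast\, t_f(g^n) = [x^a]\,\overline{S}_n = [x^a]\,S_n$, where the last equality holds because extracting a single coefficient commutes with reduction mod $2$.

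The only genuine point to verify carefully is the identification of $f^\ast$ as the ``coefficient of $x^a$'' functional on $A_f$. This is where I expect the main (though modest) obstacle to lie: one must check that the pairing $\langle T, f\rangle = a_1(Tf)$ from \eqref{eqn:duality}, restricted to the faithful quotient $A_f$ and the adapted basis element $m_e(a,0)$, is genuinely dual to the monomial $X^a$ in the parameter $X = t(g)$. This follows from the construction of the adapted basis in the proof of \cref{prop:adaptedbasis}, where $m_{(X,Y)}(a,b)$ is by definition the linear form dual to $X^a Y^b$, together with \cref{invariance-of-basis-under-pinning} which ensures $m_e(a,0) = m(a,0)$ so that no ambiguity arises from the choice of second parameter $Y = t(cg)$. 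Once this duality is in hand, the rest is the immediate substitution described above, and the lemma follows.
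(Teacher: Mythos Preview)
Your proof is correct and follows essentially the same approach as the paper. The only difference is presentational: the paper unwinds the pairing $\langle S_n(x), m(a,0)\rangle = a_1\big(S_n(x)\,m(a,0)\big)$ by hand, using the action $x\,m(a,0) = m(a-1,0)$ and property~(iv) of \cref{def:adaptedbasis} to see that only the $c_a\,m(0,0)$ term contributes, whereas you bypass this by directly invoking the construction in the proof of \cref{prop:adaptedbasis} that identifies $m_e(a,0)$ with the linear form dual to $X^a$. Both routes use \cref{invariance-of-basis-under-pinning} in the same way to justify $m(a,0)=m_e(a,0)$, and both feed $t_f(g^n)=S_n(x)$ from \cref{chebychev-computation-for-g} into that duality.
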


\begin{proof}
    Firstly, by \cref{chebychev-computation-for-g}, we find
    \[
        t_f(g^n) = S_n(t_f(g)) = S_n(x),
    \]
    since $x=t_f(g)$ by construction.  From the last equality in  \eqref{density-calculation-with-chebotarev} we  have
    \[
        f^\ast \circ t_f(g^n) = a_1\big(S_n(x)m(a,0)\big).
    \]
    We now write out $S_n(x)m(a,0) = c_0m(a,0) + c_1xm(a,0) + \dots + c_nx^nm(a,0)$ with $c_i \in \Z.$ By the choice of our pinning and \cref{invariance-of-basis-under-pinning} it follows that $xm(a,0) = xm_e(a,0) = m_e(a-1,0) = m(a-1,0).$ Thus, we can rewrite
    \begin{equation}\label{chebychev-applied-to-dihedral-form}
        S_n(x)m(a,0) = c_0m(a,0) + c_1m(a-1,0) + \dots + c_nm(a-n,0).
    \end{equation}
    By \cref{def:adaptedbasis:ord} we find that the only term in \eqref{chebychev-applied-to-dihedral-form} which contribute to $a_1(S_n(x)m(a,0))$ is $c_am(0,0)$ (coming from $c_ax^am(a,0)$). Thus, as desired
    \[
        f^\ast \circ t_f(g^n) = [x^a]S_n. \qedhere
    \]
\end{proof}

Thus, the problem is reduced to counting, for fixed $a$, the number of $n$ modulo $2^{d(a) +1}$ such that $x^a$ appears in~$S_n.$ The following theorem, whose proof is given separately in the next section, gives the complete answer. 

\begin{theorem}[Combinatorial Lemma, Bellaïche]\label{combinatorial-lemma}
We have  $[x^a]S_n = 1$ if and only if $a$ is in one of $2^{z(a) - v(a) +1}$ residue classes mod $2^{d+1}.$
\end{theorem}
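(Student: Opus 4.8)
The plan is to reduce the parity of $[x^a]S_n$ to that of a single binomial coefficient via a generating-function computation, and then read off the count from Kummer's theorem together with the base-$2$ bookkeeping encoded in $d,v,z,u$.

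First I would record the ordinary generating function of the $S_n$. The order-$2$ recurrence $S_n = xS_{n-1}-S_{n-2}$ together with $S_0=2$, $S_1=x$ gives, as formal power series in $t$,
\[
\sum_{n\geq0} S_n(x)\,t^n \;=\; \frac{2-xt}{1-xt+t^2}.
\]
Reducing modulo $2$ turns this into $\sum_{n}\overline S_n(x)\,t^n = \frac{xt}{1+xt+t^2}$. Setting $w=\frac{xt}{1+t^2}$ and using $\frac{w}{1+w}=\sum_{j\geq1}w^j$ over $\FF_2$, I can isolate the $x^a$-part: the $j$-th summand is $\frac{x^jt^j}{(1+t^2)^j}$, so only $j=a$ contributes a power $x^a$, whence
\[
\sum_{n\geq0}\big([x^a]\overline S_n\big)\,t^n \;=\; \frac{t^a}{(1+t^2)^a}, \qquad\text{so}\qquad [x^a]\overline S_n \;=\; [t^{\,n-a}]\,(1+t^2)^{-a}.
\]
Since over $\FF_2$ one has $(1+t^2)^{-a}=\sum_{k\geq0}\binom{a+k-1}{a-1}t^{2k}$, this vanishes unless $n\equiv a\pmod 2$ and $n\geq a$, in which case, writing $k=\tfrac{n-a}{2}$,
\[
[x^a]\overline S_n \;\equiv\; \binom{a+k-1}{a-1} \;=\; \binom{(a-1)+k}{a-1} \pmod 2 .
\]

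Next I would determine the parity of this binomial. By Kummer's theorem $\binom{(a-1)+k}{a-1}$ is odd precisely when adding $a-1$ and $k$ in base $2$ produces no carries, i.e.\ exactly when $a-1$ and $k$ have disjoint binary supports. Because $2^{d-1}\leq a<2^{d}$ forces $a-1<2^{d}$, this condition constrains only the bits of $k$ in positions $0,\dots,d-1$, hence depends only on $k\bmod 2^{d}$, equivalently (through $n=a+2k$) only on $n\bmod 2^{d+1}$. This already yields the periodicity asserted in the statement: the set of $n$ with $[x^a]\overline S_n=1$ is a union of residue classes modulo $2^{d+1}$.

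Finally I would count. The map $k\bmod 2^{d}\mapsto n=a+2k\bmod 2^{d+1}$ is a bijection onto the residues $n\equiv a\pmod2$, while residues of the opposite parity give coefficient $0$; so it suffices to count $k\in\{0,\dots,2^{d}-1\}$ with $a-1$ and $k$ having disjoint support, namely $2^{d-s}$ where $s$ is the number of $1$'s in $a-1$. The binary bookkeeping is the crux: if the lowest set bit of $a$ sits in position $v=v(a)$, then passing from $a$ to $a-1$ clears that bit and sets the $v$ bits below it while leaving the higher bits untouched, so $s=(u-1)+v$; using $d=z+u$ gives $d-s=z-v+1$, producing exactly $2^{z(a)-v(a)+1}$ admissible residue classes of $n$ modulo $2^{d+1}$, as claimed. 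The main obstacle is not any single deduction but keeping this base-$2$ accounting honest — correctly expressing the ones of $a-1$ through $u$ and $v$, and confirming that the no-carry condition genuinely lives in the low $d$ bits so that $2^{d+1}$ is the true period. Two harmless edge points remain: the finitely many indices $n<a$, where the honest coefficient is $0$ irrespective of residue (immaterial for the density application consuming this lemma), and the degenerate case $a=0$ (the form $\Delta$), where $[x^0]\overline S_n=0$ for every $n$ and which is excluded from the formula.
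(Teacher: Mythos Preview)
Your proof is correct and takes a route genuinely different from the paper's. The paper works with the explicit closed form $S_n(x)=\sum_k(-1)^k\frac{n}{n-k}\binom{n-k}{k}x^{n-2k}$, first reduces to odd $a$ via $\overline S_{2n}=\overline S_n^{\,2}$, then analyzes the parity of $\frac{2m}{m+a}\binom{m}{a}$ (with $m=(n+a)/2$) through a bespoke lemma characterizing when $v(m)=v\!\left(\binom{m}{a}\right)$, and finally counts by casework on $v(m)\in Z(a)\cup\{0,d(a)\}$. Your generating-function manipulation collapses all of this to the single binomial $\binom{(a-1)+k}{a-1}$, whose parity is read off directly from Kummer as a disjoint-support condition between $k$ and $a-1$; the count then drops out of a one-line calculation of the number of ones in $a-1$. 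Your argument is more streamlined and needs no parity reduction on $a$; the paper's approach, by contrast, makes the role of $v(a)$ visible structurally rather than through the digit arithmetic of $a\mapsto a-1$.

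One remark on your flagged edge case $n<a$: the full ``if and only if'' does hold for every $n\geq 0$, not merely eventually. If $k\in\{0,\ldots,2^d-1\}$ has support disjoint from that of $a-1$, then the addition $k+(a-1)$ is carry-free, so $k+(a-1)<2^d$, i.e.\ $k\leq 2^d-a$, and hence $a+2k\leq 2^{d+1}-a<2^{d+1}$. Thus all the good residues already lie in $\{a,\ldots,2^{d+1}-1\}$ and never meet the range $0\leq n<a$ where the honest coefficient vanishes. So the periodicity is exact from $n=0$ onward, and you need not defer to the density application.
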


We include an original proof of \cref{combinatorial-lemma} in \cref{sec:combinatorial-lemma}. 
\cref{bellaiche-density} follows as a corollary of \cref{combinatorial-lemma}.

\begin{proof}[Proof of \cref{bellaiche-density}]
The expression in \eqref{density-calculation-with-chebotarev} specialized to $f = m(a, 0)$ gives 
    \[
    \bel(f) = \frac{2^{z(a)-v(a)+1}}{2^{d(a) + 2}} = \frac{2^{z(a)+1}}{2^{z(a) + u(a) + v(a) + 2}} = \frac{1}{2^{u(a) + v(a) + 1}}.
    \qedhere
    \]
\end{proof}

\section{Proof of the combinatorial lemma}\label{sec:combinatorial-lemma}
In this section we give our proof of \cref{combinatorial-lemma}.

We first need another representation of the Chebyshev polynomials $S_n$ for $n \geq 2.$ We recall that it is easy to show from the defining equation that the $S_n$ satisfy the recurrence $S_n(x) = xS_{n-1}(x) - S_{n-2}(x).$ Furthermore one sees that
\begin{align}\label{chebychev_alternate_representation}
S_n(x) = \sum_{k} (-1)^k \frac{n}{n-k} \binom{n-k}{k} x^{n-2k},
\end{align}
since the polynomial on the right hand side satisfies the same initial conditions and recurrence as the~$S_n.$
As in the previous section, we denote the reduction of $S_n \pmod {2}$ by $\overline{S}_n(x).$ The following two statements are corollaries of \cref{chebychev_mod_2}.

\begin{corollary}\label{corollary1}
    The term $x^a$ appears in $\overline{S}_n(x)$ if and only if the term $x^{2a}$ appears in $\overline{S}_{2n}(x).$
\end{corollary}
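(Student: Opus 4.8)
The plan is to invoke \cref{chebychev_mod_2} and then exploit the Frobenius endomorphism on $\FF_2[x]$. That lemma gives $\overline{S}_{2n} = \overline{S}_n^2$, so the entire statement reduces to understanding how squaring acts on monomials in characteristic $2$. First I would write $\overline{S}_n(x) = \sum_{a} c_a x^a$ with each coefficient $c_a \in \FF_2$. The key observation is the ``freshman's dream'': in characteristic $2$ squaring is additive, so every cross term $2 c_a c_b\, x^{a+b}$ vanishes and, using $c_a^2 = c_a$ for $c_a \in \FF_2$, one gets
\[
\overline{S}_n(x)^2 = \sum_a c_a^2\, x^{2a} = \sum_a c_a\, x^{2a} .
\]

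Comparing monomials in the identity $\overline{S}_{2n} = \overline{S}_n^2$, the coefficient of $x^{2a}$ in $\overline{S}_{2n}$ is exactly $c_a$, the coefficient of $x^a$ in $\overline{S}_n$; odd-degree monomials $x^{2a+1}$ simply never occur in a square over $\FF_2$, so nothing is lost. Hence $c_a = 1$ (i.e.\ $x^a$ appears in $\overline{S}_n$) if and only if the coefficient of $x^{2a}$ in $\overline{S}_{2n}$ equals $1$, which is precisely the assertion. There is essentially no obstacle to overcome here: the only points requiring a moment's care are recording that the degree correspondence $a \leftrightarrow 2a$ is the correct one and noting explicitly that squaring distributes over the sum because we are in characteristic $2$. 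The result then follows immediately as a formal corollary of \cref{chebychev_mod_2}.
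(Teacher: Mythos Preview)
Your proof is correct and follows exactly the intended approach: the paper presents this statement as an immediate corollary of \cref{chebychev_mod_2} without further argument, and your use of the Frobenius endomorphism in $\FF_2[x]$ to unpack $\overline{S}_{2n} = \overline{S}_n^2$ into a coefficient-by-coefficient correspondence is precisely the implicit reasoning.
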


\begin{corollary}\label{corollary2}
    If $x^a$ appears as a term in $\overline{S}_n(x),$ then $v(a) = v(n).$ 
\end{corollary}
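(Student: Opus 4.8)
The plan is to argue by induction on the $2$-adic valuation $v(n)$, using the explicit formula \eqref{chebychev_alternate_representation} to dispatch the case $v(n) = 0$ and the Frobenius-squaring relation of \cref{chebychev_mod_2} (equivalently, \cref{corollary1}) to pass from $n'$ to $2n'$. Concretely, I would induct on $n$ itself, treating all odd $n$ as the base and reducing every even $n$ to a strictly smaller value by halving.

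For the base case, suppose $n$ is odd, so that $v(n) = 0$. By \eqref{chebychev_alternate_representation} every monomial occurring in $S_n(x)$ has the shape $x^{n-2k}$, whose exponent is congruent to $n$ modulo $2$ and hence odd. Thus any $a$ with $x^a$ appearing in $\overline{S}_n(x)$ is itself odd, giving $v(a) = 0 = v(n)$, as required.

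For the inductive step, suppose $n$ is even and write $n = 2n'$, so $v(n') = v(n) - 1$. By \cref{corollary1}, $x^a$ appears in $\overline{S}_n(x) = \overline{S}_{2n'}(x)$ if and only if $a$ is even, say $a = 2a'$, and $x^{a'}$ appears in $\overline{S}_{n'}(x)$; this is just the statement that $\overline{S}_{2n'} = \overline{S}_{n'}^2$ and that squaring is the Frobenius on $\FF_2[x]$, so it doubles exactly the exponents that already occur. Applying the induction hypothesis to $n'$ gives $v(a') = v(n')$, and therefore $v(a) = 1 + v(a') = 1 + v(n') = v(n)$, which closes the induction.

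I do not expect a genuine obstacle here: the only point requiring care is that \cref{corollary1} supplies an exact \emph{if and only if}, so that the monomials of $\overline{S}_{2n'}$ are precisely the doublings of those of $\overline{S}_{n'}$ and no exponent of the wrong valuation can sneak in. Once that exactness is invoked, the valuation bookkeeping $v(2a') = 1 + v(a')$ and $v(2n') = 1 + v(n')$ finishes the argument mechanically.
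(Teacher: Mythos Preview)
Your proof is correct and follows essentially the same approach as the paper: reduce via the Frobenius relation $\overline{S}_{2n'} = \overline{S}_{n'}^2$ (\cref{corollary1}) to the odd case, and then invoke the parity of the exponents in $S_n$. The only organizational difference is that you run an explicit induction on $v(n)$, whereas the paper strips off $2^{\min\{v(a),v(n)\}}$ in one shot and then appeals to parity; the underlying content is identical.
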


\begin{proof}
    Let $v = \min\{v(a), v(n)\}.$ Then at least one of the numbers ${a}/{2^v}$ and ${n}/{2^v}$ is odd. By \cref{corollary1}~$x^a$ appears in $\overline{S}_n(x)$ precisely if $x^{{a}/{2^v}}$ appears in $S_{{n}/{2^v}}$.  This implies that ${a}/{2^v}$ and ${n}/{2^v}$ have the same parity since the $S_n$ are either even or odd, depending on the parity of $n.$ However, ${a}/{2^v}$ and~${n}/{2^v}$ having the same parity is equivalent to $v(a) = v(n).$
\end{proof}

Write the base-$2$ expansion of $a$ as $$a = \left[b_{d(a) - 1}(a)  \quad b_{d(a) - 2} \quad \cdots \quad b_1(a) \quad b_0(a)\right]_2.$$
Here $b_i(a) \in \{0, 1\}$ are the base-2 digits of $a$, so that $a = \sum_{i} b_i(a) \cdot 2^i $ and $b_i(a) = 0$ for all $i \geq d(a)$ as well as for $i < 0$. Note that $b_{v(a)}(a)$ is always $1$. Finally, let $Z(a) = \{ 0 \leq i < d(a): b_i(a) = 0\}$ be the places of~$a$ that have a 0 in the base-$2$ expansion. Obviously, $\#Z(a) = z(a).$

We will need the following lemma  relating $v(n)$ with $v\big(\binom{n}{k}\big).$

\begin{lemma}\label{binomial_coefficients_2_adic_properties}
    Let $k$ be odd. We have that $v(n) = v\bigl(\binom{n}{k}\bigr)$ if and only if one of the following two conditions are satisfied
    \begin{enumerate}[topsep = -3pt]
        \item $n$ is odd and $b_i(n) \geq b_i(k)$ for all $i,$\label{part1}
        \item $n$ is even and $b_i(n) \geq b_i(k)$ for all $i > v(n)$ as well as $b_{v(n)}(k) = 0.$\label{part2}
    \end{enumerate}
\end{lemma}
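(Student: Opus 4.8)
The plan is to reduce the statement to a clean parity question about a single binomial coefficient and then read off the digit conditions from Lucas's theorem. The key input is the absorption identity
\[
\binom{n}{k} = \frac{n}{k}\binom{n-1}{k-1},
\]
valid for $1 \le k \le n$ (the only range of interest, since $\binom{n}{k}=0$ otherwise). Taking $2$-adic valuations and using that $k$ is odd, so that $v(k)=0$, this gives
\[
v\!\left(\binom{n}{k}\right) = v(n) + v\!\left(\binom{n-1}{k-1}\right).
\]
Hence the desired equality $v\!\left(\binom{n}{k}\right)=v(n)$ holds \emph{if and only if} $\binom{n-1}{k-1}$ is odd. First I would record this reduction; it is the crux that turns a statement about matching valuations into a statement about a single binomial being a $2$-adic unit.

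Next I would invoke Lucas's theorem in the form: $\binom{n-1}{k-1}$ is odd precisely when $b_i(n-1)\ge b_i(k-1)$ for every $i$, i.e.\ the set bits of $k-1$ are contained among those of $n-1$. It then remains to translate this containment of binary digits of $n-1$ and $k-1$ back into the stated digit conditions on $n$ and $k$. Since $k$ is odd we have $b_0(k-1)=0$ and $b_i(k-1)=b_i(k)$ for $i \ge 1$; this is the only fact about $k-1$ that enters.

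The translation splits according to the parity of $n$, matching the two cases of the lemma. If $n$ is odd, then $b_0(n-1)=0$ and $b_i(n-1)=b_i(n)$ for $i\ge1$; the containment is automatic at $i=0$ (both digits vanish) and at $i\ge 1$ reduces to $b_i(n)\ge b_i(k)$, which together with the automatic equality $b_0(n)=b_0(k)=1$ is exactly condition~\eqref{part1}. If $n$ is even with $v=v(n)\ge1$, then subtracting $1$ flips the trailing block, so $b_i(n-1)=1$ for $0\le i<v$, $b_v(n-1)=0$, and $b_i(n-1)=b_i(n)$ for $i>v$. The containment is automatic for $i<v$; at $i=v$ it forces $b_v(k-1)=b_v(k)=0$; and for $i>v$ it becomes $b_i(n)\ge b_i(k)$. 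These are precisely the two requirements of condition~\eqref{part2}. Assembling the cases completes the proof.

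I expect the only delicate point to be the bookkeeping of binary digits after subtracting $1$ --- in particular pinning down the behaviour at the valuation position $i=v(n)$ in the even case, which is exactly where the otherwise mysterious hypothesis $b_{v(n)}(k)=0$ originates. Everything else is the mechanical absorption identity, the additive valuation computation, and a direct application of Lucas's theorem, with no genuine estimate involved.
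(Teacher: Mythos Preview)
Your proof is correct and takes a genuinely different route from the paper. The paper invokes Kummer's theorem --- that $v\bigl(\binom{n}{k}\bigr)$ equals the number of borrows when subtracting $k$ from $n$ in base~$2$ --- and then argues directly about the borrow cascade: the trailing zeros of $n$ against the trailing $1$ of $k$ force exactly $v(n)$ borrows, and one tracks whether any further borrow occurs at or beyond position $v(n)$. Your approach instead uses the absorption identity $\binom{n}{k}=\frac{n}{k}\binom{n-1}{k-1}$ to rewrite $v\bigl(\binom{n}{k}\bigr)=v(n)+v\bigl(\binom{n-1}{k-1}\bigr)$, reducing the matching-of-valuations question to the single parity condition ``$\binom{n-1}{k-1}$ is odd,'' which Lucas's theorem decides digit by digit. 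The translation of the Lucas condition on $n-1,k-1$ back to $n,k$ --- flipping the trailing block of $n$ and the single bottom bit of $k$ --- is handled cleanly, and the appearance of the hypothesis $b_{v(n)}(k)=0$ at the valuation position falls out naturally. Your argument is arguably more streamlined: it avoids reasoning about carries or borrows altogether and reduces everything to a mechanical digit-containment check, at the modest cost of the extra bookkeeping for $n-1$ versus $n$.
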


\begin{proof}
First note that 
$$v(n!) = n - (\text{sum of the base-}2 \text{ digits of }n).$$
From this we see immediately that $v\bigl(\binom{n}{k}\bigr)$ is the number of borrowed digits when $k$ is subtracted from $n$ in base $2.$ This is Kummer's theorem; see \cite{Rom} for a convenient reference. 

Since $k$ is odd, we automatically have $ v(n) \leq v\big(\binom{n}{k}\big),$ as the last digit of $k$ is a $1$ and thus for the subtraction of $k$ from $n$ the last $v(n)$ zeros of $n$ are each insufficient and require borrowing from the previous digit. Part \ref{part1} follows immediately. For \ref{part2} we know there are at least $v(n)>0$ borrowed digits and must assure that there are no additional ones. This means first of all that at the $v(n)^{\rm th}$ place there is no borrowed digit, or in other words $b_{v(n)}(k) = 0.$ Here is an example with $v(n) = 4:$
$$
\begin{array}[t]{lrrrrrrrrrrrr}
& n:  & & 1 & \ast &  \ast &  \ast &  \overset{0}{\cancel{1}} & \overset{1}{\cancel{0}} & \overset{1}{\cancel{0}} & \overset{1}{\cancel{0}} & \phantom{}^1 0 \\
- & k:   & & \ast &  \ast &  \ast &  \ast & \ast & \ast & \ast & \ast &  1 \\
\hline
 &    & & \ast &  \ast &  \ast &  \ast & 0 & \ast & \ast & \ast &  1 \mathrlap{\,.}
\end{array}
$$
Also there must be no borrows to the left of the~$v(n)^{\rm th}$ place---that is, every digit of $n$ to the left of the~$v(n)^{\rm th}$ must be no less than the corresponding digit of~$k$. Finally, since these conditions only affect the digits of $n$ to the right of the $\ell(k)^{\rm th}$ place, they depend only on $n$ modulo $2^{d(k)}$.
\end{proof}

We are now ready to give a proof of the Combinatorial Lemma (\cref{combinatorial-lemma}).

\begin{proof}[Proof of \cref{combinatorial-lemma}]
    We will first prove the case where $a$ is odd and then show that the general case reduces to this case.

    Suppose $a$ is odd. We will show that there are exactly $2^{z(a) + 1}$ residue classes mod $2^{d(a) + 1}$ such that $x^a$ appears in $\overline{S}_n(x)$ if and only $n$ is in one of those $2^{z(a) + 1}$ residue classes.

    By reducing \eqref{chebychev_alternate_representation} mod $2$ we find that
    $$\overline{S}_n(x) = \sum_{k} \frac{n}{n-k} \binom{n-k}{k} x^n = \sum_{a \equiv n \pmod{2}} \frac{2n}{n+a} \binom{\frac{n+a}{2}}{a} x^a.$$
    Since the sum runs over all $n$ with the same parity as $a$ we are thus interested in all $n$ such that $\frac{2n}{n+a}\binom{\frac{n+a}{2}}{a}$ is odd. We have thus reduced the problem to counting all numbers $m = \frac{n+a}{2}$ with the property that $v(m) = v\left(\binom{m}{a}\right).$ We will now use \cref{binomial_coefficients_2_adic_properties}, which tells us that either $v(m) = 0$ or $b_{v(m)}(a) = 0.$ Hence, we conclude $v(m) \in Z(a) \cup \{0, d(a)\}.$ Given a fixed choice of $v(m)$ we have by definiton $b_{v(m)}(m) = 1$ and all the digits in the base $2$ expansion of $m$ right of the $v(m)^{\rm th}$ are $0.$ By \cref{binomial_coefficients_2_adic_properties} we find that for all~$i > v(m),$ if $b_i(a) = 1$ it implies that $b_i(m) = 1$ and if $b_i(a) = 0,$ then $b_i(m)$ can either be $0$ or~$1.$

    For each index $i \in Z(a) \cup \{0, d(a)\} = \{ d(a) > i_{z(a)} > \dots > i_2 > i_1 > 0\},$ one can unqiuely write down one $m$ with $v(m) = i$ for every subset of zeros of $a$ appearing to the left of $i^{\rm th}$ place. There is one $m$ with $v(m) = d(a),$ one~$m$ with $v(m) = i_{z(a)},$ $2$ allowed $m$ with $v(m) = i_{z(a) - 1},$ and so on through $2^{z(a)}$ allowed~$m$ with $v(m) = 0.$ In total we get
    $$1 + 1 + 2 + \cdots + 2^{z(a)-1} + 2^{z(a)} = 2^{z(a) + 1}$$
    allowed $m$ modulo $2^{d(a)}.$ If we now translate from $m= \frac{n+a}{2}$ to $n = 2m - a$, we find exactly $2^{z(a) + 1}$ residue classes mod $2^{d(a)}$ where $x^a$ appears in $\overline{S}_n(x).$ So this case $a$ odd is proven.

    We now show that the general case can be reduced to $a$ being odd: write $a = 2^{v(a)}a'$, where $a'$ is odd. We know by \cref{corollary1,corollary2} that $x^a$ appears in $\overline{S}_n(x)$ if and only if $n = 2^{v(a)}n'$ with $x^{a'}$ appearing in $\overline{S}_{n'}(x).$ This happens if $n'$ lies in one of the $2^{z(a') + 1}$ admissible residue classes mod $2^{d(a') + 1}.$ Since $z(a') = z(a) - v(a)$ and $d(a') = d(a) - v(a),$ this gives as claimed that $n$ lies in one of $2^{z(a)-v(a) + 1}$ admissible residue classes mod $2^{d(a) + 1}.$
\end{proof}

\section{Modular forms mod 2 of level 9}\label{sec:lvl9}
We extend some of the results of Nicolas, Serre, and Bellaïche on mod-2 modular forms of level~1 to level~$\Gamma_0(9)$. The general outline of the story is similar, so we proceed efficiently, only indicating those features of the proofs that differ substantially from the level-1 case. 
 
\subsection{The space of forms, their Hecke algebra, and the Galois pseudodorepresentation}
Here we generalize \cite{NS1,NS2,Brep}, with a bent towards understanding powers of $C$ modulo $2$. Recall that~$C$ is the mod-$2$ $q$-expansion of the unique normalized cuspform of weight $4$ and level $9$.

\begin{enumerate}[itemsep = 2pt, listparindent = 0pt, parsep = 5pt, leftmargin = 2em]
\item {\bf Algebra of forms:} 
As before, we let $M_k(9)\coloneqq M_k(9, \FF_2)$ be the image in $\FF_2\llbracket q \rrbracket $ of mod-$2$ $q$-expansions of forms of weight $k$ and level $9$ with integral Fourier coefficients at the cusp at infinity, and set $M(9) = \sum_k M_k(9)$.
Since $$E_{2, 3}(q) \coloneqq \frac{1}{2}\big(E_2(q) - 3 E_2(q^3)\big) = 1 + 12 q + 36 q^{2} + 12 q^{3} + 84 q^{4} + 72 q^{5} + O(q^{6})$$
is a (holomorphic) modular form of level 3 (and hence of level 9) with mod-$2$
 $q$-expansion $1$, we actually have $M_k(9) \subseteq M_{k + 2}(9)$, so that $M(9) = \bigcup_k M_k(9)$, a filtered $\FF_2$-algebra under the weight filtration $w$. Here $E_2(q) = 1 - 24 q - 72 q^{2} - 96 q^{3} - 168 q^{4} - 144 q^{5} + O(q^{6})$ is the usual (nonholomorphic) Eisenstein series of weigh $2$. 

 \item {\bf Structure of $M(9)$:}
We claim that $M(9) = \FF_2[F],$
where $$F = \sum_{3 \nmid n} q^{n^2} = q +  q^{4} +  q^{16} +  q^{25} +  q^{49} +  q^{64} +  O(q^{100}) \in M_2(9).$$
For example, $C = F + F^4$ and $\Delta = F + F^4 + F^9 + F^{12}.$

Indeed, let $\chi$ be the quadratic character of conductor $3$ viewed with modulus $9$. Then a basis for~$M_1(
\Gamma_1(9), \chi)$ is given  by $e(q) \coloneqq E_{1, 1, \chi}(q)$ and $e(q^3)$ (see \cite[{Theore~5.8\footnote{The full text is freely available.  For Theorem 5.8 see \scriptsize\url{https://wstein.org/books/modform/modform/eisenstein.html}.}}]{St} for both the statement and the notation). It is easy to see that $6 e(q)$ is a lift of the mod-2 Hasse invariant to weight $1$ with character $\chi$, and that $e(q) - e(q^3)$ is a lift of $F$ to weight~$1$ with character $\chi$. In particular, if we let~$w_1$ be the filtration on $M(9)$ when we view each form as a mod-2 form of level~$\Gamma_1(9)$, then~$w_1(F) = 1$. (See, for example, \cite[\S 2.4]{deomed} for context and details on $w_1$.) It follows that a lift of~$F^n$ appears for~$\Gamma_1(9)$ with character $\chi^n$ in weight $n$. Since $\chi$ is quadratic, we have $w(F^n) = n$ for $n$ even; since further Hasse lifts with the same quadratic character, we also have $w(F^n) = n + 1$ for $n$ odd. Therefore~$M_k(9)$ contains $1, F, \ldots, F^{k}$.  
Standard dimension formulas give $\dim M_k(9) = k + 1$ for~$k \geq 0$ even, 
completing the proof.

\item {\bf The intersection of the kernels of $U_2$ and $U_3$:}
Let $K(9) \subset M(9)$ be the intersection of the kernels of $U_2$ and $U_3$. This is the space that will be in duality with the Hecke algebra. The Hecke recurrence for the sequence $\{U_2(F^n)\}_n = \{1, F^2, F, F^3, F^2, F^4, F^3, F^5 \ldots\}$ has order $2$ and hence characteristic polynomial $X^2 + F$, whence it is easy to determine that a $\ker U_2$ is spanned by $\{F + F^4,\ F^3 + F^6,\ F^5 + F^8,\ F^7 + F^{10},\ \ldots, \} = \{F^n + F^{n + 3}\}_{n\ {\rm odd}}.$
(See \cite[Théorème~3.1]{NS1} or \cite[Chapter 6]{Mthesis} for more on the Hecke recurrence.)

The same method for $U_3$ gives us 
$\{U_3(f^n)\}_n = \{1,\ 0,\ 0,\ F^3 + F^2 + F,\ 0,\ 0,\ F^6 + F^4 + F^2,\ 
\ldots\}$, so that the order-3 characteristic polynomial is $X^3 + F^3 + F^2 + F$, and $\ker U_3$ has basis $\{F^n\}_{3 \nmid n}$.

It follows that a basis for $K(9)$ is given by $\{F^n + F^{n + 3}\}_{\{n: \gcd(n, 6) = 1\}}$.

\item {\bf The grading on $K(9)$:} By \cite[Theorem~3.4]{deomed} $K(9)$ has a grading by $(\ZZ/24\ZZ)^\times$: that is, 
$$K(9) = K(9)^1 \oplus K(9)^5 \oplus K(9)^7 \oplus K(9)^{11} \oplus K(9)^{13} \oplus K(9)^{17} \oplus K(9)^{19} \oplus K(9)^{23},$$
with $K(9)^i \coloneqq  \{f \in K(9): a_n(f) \neq 0 \implies n \equiv i \pmod{24}\}.$ Here of course $i$ is viewed modulo $24.$

Moreover, for every prime $\ell \geq 5$, we have 
\begin{equation}\label{hecke9}T_\ell K(9)^i \subseteq K(9)^{\ell i}.
\end{equation}

\item {\bf The Hecke algebra:} For $k \geq 2$ even let $A_k(9)$ be the algbera generated inside $\End_{\FF_2}\big(M_k(9) \cap K\big)$ by the action of the Hecke operators $T_\ell$ with $\ell \geq 5$. As in the level-$1$ case, let $A(9) \coloneqq \varprojlim_k A_k(9)$, where $A_{k+2}(9) \twoheadrightarrow A_{k}(9)$ by restriction.  Then $A(9)$ is a complete noetherian $\FF_2$-algebra.

\item {\bf Duality:} \label{duality9} As in level 1, we have a continuously perfect pairing $A(9) \times K(9) \to \FF_2$ given by $\langle T, f \rangle \mapsto a_1(Tf)$ that in particular induces a natural $A(9)$-equivariant isomorphism $K(9) = A(9)^{\vee, {\rm cont}}$, through which we may identify a form $f$ in $K(9)$ with closed hyperplane $H_f$ of $A(9)$ corresponding to the kernel of pairing with $f$. 

\item {\bf The unique maximal ideal of $A(9)$:} \label{local9} One can show that $A(9)$ is a \emph{local} ring whose maximal ideal~$\mm(9)$ contains all the $T_\ell$ with $\ell \geq 5$. These follow from Serre Reciprocity (Serre's conjecture) and the fact that the unique Hecke eigensystem in $M_k(9)$ for $k = 2,4$ is $\ell \mapsto 0$.

By construction, $\mm(9)$ is certainly the closed ideal of $A(9)$ generated by the $T_\ell$. But by the argument of \cite[Lemma 10.2.4]{Bim}, $\mm(9)$ is also the closed \emph{$\FF_2$-subvector-space} of $A(9)$ generated by all the~$T_\ell$ with~$\ell \geq 5$.

\item {\bf Powers of $C$ in $K(9)$:}
From the expression for $C$ in \eqref{Ceq} and the fact that $C = F + F^4$, it is clear that $C \in K(9)^1$. It follows that $C^i \in K(9)^i$ for every $i$ relatively prime to $24$, so that $K(9)^i$ contains the span of $\{C^n: n \equiv i \pmod{24}\big\}.$\footnote{In fact, Monsky has shown that the space $\mathcal C := \FF_2\big\langle C^i: \gcd(i, 24) = 1 \big\rangle$ is a proper $A(9)$-stable submodule of~$K(9)$ and has determined the structure of the faithful quotient of $A(9)$ acting on $\mathcal C$ \cite[Theorem~2.17 and \S 5]{Monsky}.}

\item {\bf The Galois pseudorepresentation:} As in level $1$, we have a continuous Galois pseudorepresentation 
$t_9: \Gal(\overline\QQ/\QQ) \to A(9)$
that is odd (that is, $t(c) = 0$ for any complex conjugation $c$) and, for any prime $\ell \geq 5$, unramified with $t(\Frob_\ell) = T_\ell$. (In fact, it follows from \eqref{hecke9} that $A(9)$ and~$t$ behave compatibly with the $(\ZZ/24\ZZ)^\times$-grading in the sense of \cite[Theorem~3.4]{deomed}.)

Analogously to \cite[Première étape of the proof of Théorème~1]{Brep}, $t$ factors through the maximal pro-$2$ extension $E_9$ of $\QQ$ unramfied outside $2$ and $3$. 
\end{enumerate}
\subsection{Fields of determination for mod-2 modular forms of level 9}
In this subsection, we translate a very small part of the analysis of \cite{Bmem} to level $9$. More work remains to be done: determining the precise structure of the Hecke algebra $A(9)$ and the Galois group $\Gal(E_9/\QQ)$, as well a thorough understanding of abelian and dihedral forms. For our purposes we content ourselves with the following.  

Fix $f \in K(9)$. By the same argument as in level $1$, the association $\ell \mapsto a_\ell(f)$ is frobenian (see \cref{def:frobenian-set}), and the minimal field of determination $E_f$ is a subfield of $E_9$, so in particular the corresponding Galois group $G_f= \Gal(E_f/\QQ)$ is a $2$-group. 

As in level $1$, we call a form $f \in K(9)$ \emph{abelian} if its minimal field of determination $E_f$ is abelian over $\QQ$. (We may similarly define \emph{dihedral} forms of level 9, but we will not pursue this topic further here.)

\begin{prop}\label{ab9}
Define the following six forms $\aa_i \in K(9)^i$ for $i \in (\ZZ/24\ZZ)^\times - \{\pm 1\}$: 
\begin{align*}
    \aa_5 &= C^5, & \aa_7 &= C^7, & \aa_{11} &= F^{20} + F^{17} + F^{14} + F^{11}, \\
    \aa_{13} &= C^{13},& 
    \aa_{17} &= F^{20} + F^{17}, & 
    \aa_{19} &=F^{28} + F^{25} + F^{22} + F^{19}.
\end{align*}
Then $\aa_i$ is abelian with field of determination $\QQ(\mu_{24})$.
Furthermore, for primes $\ell \geq 5$ we have $$a_\ell(\aa_i) = 1 \iff \ell \equiv i \pmod{24}.$$ 
\end{prop}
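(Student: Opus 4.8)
The plan is to split the statement into its two directions: the containment $\aa_i \in K(9)^i$ (together with the grading) gives one direction and all the support information for free, while the reverse direction is a genuine arithmetic statement that I would reduce to a parity computation for representation numbers of primes by sums of squares. First I would record that $\aa_i \in K(9)^i$. For $i = 5,7,13$ the form is $C^i$: since $C = \sum_{\gcd(n,6)=1} q^{n^2}$ by \eqref{Ceq}, every exponent of $C^i$ is a sum of $i$ squares of integers coprime to $6$, and $n^2 \equiv 1 \pmod{24}$ whenever $\gcd(n,6)=1$, so each exponent is $\equiv i \pmod{24}$; in particular the exponents are odd and prime to $3$, whence $U_2 C^i = U_3 C^i = 0$ and $C^i \in K(9)$. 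For $i = 11,17,19$ the form $\aa_i$ is visibly a sum of the basis elements $F^n + F^{n+3}$ of $K(9)$ established above, and the telescoping choice of exponents is exactly what cancels the off-class coefficients; one checks from the $q$-expansion of $F$ that $\aa_i \in K(9)^i$. Granting this, the $(\ZZ/24\ZZ)^\times$-grading on $K(9)$ gives the easy direction at once: $a_\ell(\aa_i) = 1$ forces $\ell \equiv i \pmod{24}$. It then remains to prove the converse,
\[
(\star)\qquad \ell \equiv i \pmod{24}\ \Longrightarrow\ a_\ell(\aa_i) = 1\qquad\text{for all primes } \ell \geq 5.
\]

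Next I would reformulate $(\star)$ combinatorially. Reading off the theta expansions, $a_\ell(\aa_i)$ is the parity of a representation number: for $\aa_i = C^i$ it is $\#\{(n_1,\dots,n_i): \sum_j n_j^2 = \ell,\ n_j \geq 1,\ \gcd(n_j,6)=1\} \bmod 2$, and for the $F$-polynomials it is the analogous sum of counts with $n_j$ only required coprime to $3$. Passing from ordered tuples to multisets, a multiset contributes $1 \bmod 2$ exactly when the multinomial $i!/\prod_k m_k!$ of its multiplicities is odd, which by Kummer's theorem happens precisely when the $m_k$ have disjoint binary supports adding up to that of $i$. This collapses $a_\ell(\aa_i)$ to a short explicit sum of congruence-restricted representation numbers of the single prime $\ell$ (for instance, for $C^5$ only the multiplicity types $(5)$ and $(4,1)$ survive).

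The heart of the argument, and the step I expect to be the main obstacle, is to show that this parity equals $1$ exactly when $\ell \equiv i \pmod{24}$. The natural tool is the classical theory of sums of squares together with genus theory: the class of $\ell$ modulo $24$ is pinned down by the symbols $\left(\tfrac{-1}{\ell}\right)$, $\left(\tfrac{2}{\ell}\right)$, $\left(\tfrac{-3}{\ell}\right)$, equivalently by the splitting of $\ell$ in the quadratic subfields $\QQ(i)$, $\QQ(\sqrt{2})$, $\QQ(\sqrt{-3})$ of $\QQ(\mu_{24})$, and these symbols are exactly what govern the parities of the relevant two- and multi-square representation numbers of $\ell$ (via Jacobi's formulas for $r_2$, $r_4$ and their congruence-restricted analogues). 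I would arrange the computation so that, after reduction modulo $2$, the constrained count of $\ell$ becomes a product of such symbols evaluating to $1$ iff $\ell$ lies in the one class $i$; here the congruence $C \equiv \Delta(q) + \Delta(q^9) \pmod 2$ of \cref{lem:Casdelta}, together with the known level-$1$ behaviour of $\Delta^3$ and $\Delta^5$, should let me import part of this bookkeeping rather than redo it. The delicate point is controlling the coprimality-to-$6$ constraints and the exclusions of the ``diagonal'' multisets uniformly over all primes in a fixed class, rather than merely on the small examples that fix the normalization.

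Finally, the field-of-determination claim is then formal. By the general theory recalled before the proposition, $\ell \mapsto a_\ell(\aa_i)$ is frobenian in the sense of \cref{def:frobenian-set} with minimal field of determination inside $E_9$; once $(\star)$ and the grading identify it with the indicator of $\{\ell \equiv i \bmod 24\}$, it factors through $\Gal(\QQ(\mu_{24})/\QQ) = (\ZZ/24\ZZ)^\times$ via $\Frob_\ell \mapsto \ell \bmod 24$. Since the indicator of a singleton $\{i\}$ is constant on the cosets of a subgroup $H \leq (\ZZ/24\ZZ)^\times$ only when $H$ is trivial, this function factors through no proper quotient, so the minimal field of determination is exactly $\QQ(\mu_{24})$ and $\aa_i$ is abelian, as claimed.
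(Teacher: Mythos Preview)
Your multinomial reduction is correct as far as it goes, and for $C^5$ the surviving type $(4,1)$ does indeed collapse the problem to the binary form $4m^2+n^2$. But the proof is incomplete: you yourself flag $(\star)$ as ``the main obstacle'' and then only gesture at genus symbols and Jacobi-type formulas without carrying any of it out. Moreover the reduction is not uniform: for $7=111_2$ five multiplicity types survive ($(7),(6,1),(5,2),(4,3),(4,2,1)$), and likewise five for $13=1101_2$, so you would face a genuine inclusion--exclusion over several forms rather than one; for the $F$-polynomial $\aa_i$ the bookkeeping is worse still. Nothing in your sketch explains how these sums of constrained representation numbers reduce, mod $2$, to a single Legendre-symbol condition.

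The paper bypasses all of this with one observation you are missing: in characteristic $2$ one has $g(q)^{2^k}\equiv g(q^{2^k})$, and together with $C^3=\Delta(q^3)$ this lets each $\aa_i$ be written as a product of exactly \emph{two} unary theta series. Concretely $C^5=C^4\cdot C$, $C^7=C^4\cdot C^3$, $C^{13}=(C^3)^4\cdot C$, $\aa_{17}=F^{16}\cdot C$, $\aa_{11}=C^3\cdot F^8$, $\aa_{19}=C^3\cdot F^{16}$, so that $\aa_i=\sum_{m,n>0}q^{Q(m,n)}$ for a single binary form $Q(x,y)=ax^2+by^2$; a squares-mod-$24$ check shows the side conditions on $m,n$ are \emph{equivalent} to $Q(m,n)\equiv i\pmod{24}$. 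Then $(\star)$ becomes the statement that a prime $\ell\equiv i\pmod{24}$ is represented by $Q$ exactly once, which is settled by ideal factorization in $\QQ(\sqrt{-ab})$ (class number $1$ in every case except $i=11$, where $\QQ(\sqrt{-6})$ has class number $2$ and one passes through the nonprincipal prime above $2$). This is both shorter and case-uniform; your multinomial step, while not wrong, is a detour that leaves the arithmetic undone.
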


\begin{proof}
We proceed as follows: for each $i$, using two of the three formulations 
$$C = \sum_{\gcd(n, 6) = 1} q^{n^2}, \qquad
C^3 = \Delta(q^3) = \sum_{\text{$n$ odd}} q^{3n^2},\qquad \text{ and } \qquad  F = \sum_{3 \nmid n} q^{n^2},$$
we express \( \alpha_i \) as 
\begin{equation} \label{alphai} \alpha_i = \sum_{\substack{m, n > 0 \\ \text{+ conditions}}} q^{Q(m,n)}
\end{equation} for some quadratic form $Q(x,y) = ax^2 + by^2$ of fundamental discriminant $-D$. By considering factorization in $K\coloneqq \QQ(\sqrt{-D})$ we conclude that a prime $\ell \equiv i \pmod{24}$ is represented by $Q(x,y)$ uniquely. Separately, by considering squares modulo $24$, we show that $Q(m, n) \equiv i \pmod{24}$ if and only if $m$ and $n$ satisfy the precise conditions in the expression for $\alpha_i$ from \eqref{alphai}. 
The case $i=11$, for which the question of representability of $\ell$ by $Q(x,y)$ is the most delicate, as the ring of integers of $K$ is not a PID, is given in full below.  The other cases are straightforward. 
\renewcommand{\arraystretch}{1.2}
\begin{center}
\begin{tabular}{c|c|c|c|c|c}
$i$ & $\alpha_i$ & $Q(m, n)$ & condition on $m$ & condition on $n$ & field $K$ \\
\hline\hline 
5 & $C^4 \cdot C$ & $4m^2 + n^2$ & $\gcd(m, 6) = 1$ & $\gcd(n, 6) = 1$ & $\QQ(i)$\\
7 & $C^4 \cdot C^3$ & $4m^2 + 3n^2$ & $\gcd(m, 6) = 1$ & $n$ odd & $\QQ(\sqrt{-3})$\\
11 & $C^3 \cdot F^8$ & $3m^2 + 8n^2$ & $m$ odd &  $3 \nmid n$ & $\QQ(\sqrt{-6})$\\
13 & $C \cdot (C^3)^4$ & $m^2 + 12n^2$ & $\gcd(m, 6) = 1$ & $n$ odd & $\QQ(\sqrt{-3})$\\
17 & $F^{16} \cdot C$ & $16m^2 + n^2$ & $3 \nmid m$ & $\gcd(n, 6) = 1$  & $\QQ(i)$\\
19 & $C^3 \cdot F^{16}$ & $3m^2 + 16n^2$ & $m$ odd & $3 \nmid n$ & $\QQ(\sqrt{-3})$
\end{tabular}\end{center}
For example, for $i = 11$, we have 
\begin{align*}
    \alpha_{i} = F^{20} + F^{17} + F^{14} + F^{11} &= (F^4 + F)(F^{16} + F^{10}) = (F^4 + F)(F^8 + F^5)^2 \\ &= (F^4 + F)(F^4 + F)^2 F^8 = C^3 \cdot F^8  = \sum_{\substack{\text{$m$ odd} \\ 3 \nmid n}}
q^{3m^2 + 8n^2}.
\end{align*}
By considering squares modulo $24$, is easy to confirm that $3m^2 + 8n^2 \equiv 11 \pmod{24}$ if and only if $m$ is odd and $n$ is prime to $3$. To show that any prime $\ell \equiv 11 \pmod{24}$ is represented by $Q(x, y) = 3x^2 + 2y^2$ uniquely, we can argue as follows: since $-6 $ is a square modulo $\ell$, the ideal~\((\ell)\) splits in $K = \QQ(\sqrt{-6})$ into a product of two primes, each of norm $\ell$. Let $\lambda$ be one of the primes lying over $\ell$. Since \( \ell = x^2 + 6y^2 \) has no solutions (consider modulo $6$, for example), $\lambda$ is nonprincipal: in fact, $\lambda= (\ell, a +  \sqrt{-6})$, where~$a$ is a square root of $-6$ modulo $\ell$. Similarly, we can factor the ramified ideal $(2)$ into the square of the nonprincipal ideal $\mathfrak p = (2,  \sqrt{-6})$. Since $K$ has class number $2$, the ideal $\mathfrak p \lambda = (b + m \sqrt{-6})$ is principal, with $b^2 + 6m^2 = 2\ell$, for unique positive $b$, $m$ (since $K$ has no nontrivial units). Moreover, $b$ must be divisible by $4$ (consider modulo $8$). Therefore $n\coloneqq b/4$ and $m$ are the unique positive integral solutions to $Q(m, n) = \ell$ that we seek. 
See also \cite[\S2C, esp.~Cor.~2.27; \S3 esp.~Cor.~3.26]{Cox}.
\end{proof}
 
\subsection{Densities of mod-2 modular forms of level 9}\label{densityitem}
In this subsection, we extend \cref{belaichedensity} to level~9, which amounts to applying results of \cite[Chapter 10]{Bim} and extending some unpublished work of Bellaïche. As with forms of level $1$, for a form $f \in K(9)$, we consider $\bel(f)$. 

\begin{theorem}\leavevmode
\begin{enumerate}[itemsep = 3pt, topsep=-5pt, ref = {\thetheorem(\roman*)}]
\item {\bf Dyadicity:} For any $f \in K(9)$, the density $0 \leq \bel(f) \leq 1$ is a dyadic rational number.
\item\label[theorem]{thm:lvl9:nonzero}  \label{cond:non-zero}{\bf Density is nonzero (with one exception):} For $f \in K(9),$ we have $\bel(f) = 0$ if and only if~$f = C$. In other words, for any $f \neq C$ in $K(9)$ we have $\bel(f) > 0.$ 

\item{\bf Graded density is nonmaximal (with six exceptions):} 
For any \emph{graded} $f$ in $K(9)$ (that is, $f \in K(9)^i$ for some $i \in (\ZZ/24\ZZ)^\times$), we have $\bel(f) \leq \frac{1}{8}.$ If $f \neq \aa_5$, $\aa_7$, $\aa_{11}$, $\aa_{13}$, $\aa_{17}$, $\aa_{19}$ from  \cref{ab9}, then $\bel(f) < \frac{1}{8}$. 
\end{enumerate}
\end{theorem}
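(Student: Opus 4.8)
The plan is to transport the Chebotarev/group-counting framework of \cref{belaichedensity} to level $9$, so that every assertion becomes a statement about the finite Galois $2$-group $G_f = \Gal(E_f/\QQ)$ and the functional $f^\ast$. Writing $H_f = \{h \in G_f : f^\ast(t_f(h)) = 1\}$, the level-$9$ analogue of \eqref{density-calculation-with-chebotarev} gives $\bel(f) = \#H_f/\#G_f$. Part (i) then drops out at once: $E_f \subseteq E_9$ is a finite subextension of a pro-$2$ extension, so $\#G_f$ is a power of $2$ and $\bel(f)$ is a dyadic rational in $[0,1]$.

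For part (ii) I would first show $\bel(f) = 0 \iff a_\ell(f) = 0$ for all primes $\ell \geq 5$. Centrality of $t$ makes $h \mapsto f^\ast(t_f(h))$ a class function, so $H_f$ is a union of conjugacy classes; since all $\ell \geq 5$ are unramified in $E_f$ and, by Chebotarev, every class is the Frobenius class of a positive-density set of such primes, $\bel(f) = 0$ forces $f^\ast(t_f(h)) = 0$ for all $h$, i.e.\ $f^\ast(T_\ell) = a_\ell(f) = 0$ for all $\ell \geq 5$ (and conversely). Now I invoke the level-$9$ version of \cite[Lemma 10.2.4]{Bim} recorded in \cref{sec:lvl9}: the maximal ideal $\mm(9)$ is the \emph{closed $\FF_2$-subspace} of $A(9)$ spanned by the $T_\ell$. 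A continuous linear form killing every $T_\ell$ therefore kills $\mm(9)$, so $f^\ast$ factors through $A(9)/\mm(9) = \FF_2$ and equals either $0$ or the augmentation $\varepsilon$. By the perfect duality of \cref{duality9} the assignment $f \mapsto f^\ast$ is injective, so at most two forms arise; as $a_1(C) = 1$ while $a_\ell(C) = 0$ for every prime (because $C = \sum_{\gcd(n,6)=1} q^{n^2}$ is supported on squares), $C$ realizes $\varepsilon$. Hence $\bel(f) = 0 \iff f \in \{0, C\}$, the $f = 0$ being the trivial exception.

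For part (iii) the bound $\bel(f) \leq \tfrac18$ is immediate from Dirichlet: a form $f \in K(9)^i$ has $a_\ell(f) \neq 0$ only for $\ell \equiv i \pmod{24}$, a set of primes of density $\tfrac18$. For the equality analysis I split on $i$. When $i \in \{5,7,11,13,17,19\}$ I set $g = f + \aa_i$; by \cref{ab9} one has $a_\ell(\aa_i) = 1$ exactly when $\ell \equiv i \pmod{24}$, so on that progression the coefficients of $f$ and $g$ are complementary while off it both vanish, giving $\bel(f) + \bel(g) = \tfrac18$. Since $g \in K(9)^i$ with $i \neq 1$ cannot equal $C \in K(9)^1$, part (ii) yields $\bel(g) = 0 \iff g = 0 \iff f = \aa_i$; thus $\bel(\aa_i) = \tfrac18$ while every other such $f$ has $\bel(f) < \tfrac18$. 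When $i = \pm 1$ there is no abelian form to subtract, so I argue on $G_f$ directly: excluding the trivial forms $0, C$ (for which $\bel = 0$), the grading of $t$ descends to a surjection $\bar Q\colon G_f \to (\ZZ/24\ZZ)^\times$, so each fibre $G_f^j = \bar Q^{-1}(j)$ has $\#G_f/8$ elements and $H_f \subseteq G_f^i$ (as $t(h)f \in K(9)^{ji}$ forces $ji \equiv 1$, i.e.\ $j = i$ since the group has exponent $2$). Now $t(1) = 2 = 0$ and $t(c) = 0$ for complex conjugation $c$ (oddness of $t_9$), while $\bar Q(1) = 1$ and $\bar Q(\bar c) = -1 \equiv 23$; hence $1 \in G_f^1 \setminus H_f$ and $\bar c \in G_f^{23} \setminus H_f$, so $H_f \subsetneq G_f^i$ and $\bel(f) < \tfrac18$ for every $f \in K(9)^{\pm 1}$, with no exceptions.

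The genuinely load-bearing input is the cited structural fact that $\mm(9)$ is spanned as a \emph{closed vector space}, not merely as an ideal, by the $T_\ell$ with $\ell \geq 5$; granting it, part (ii) is formal and the equality cases of part (iii) follow from it. The step I would be most careful about is the grading bookkeeping on the Galois side: checking that $\ker t_f \subseteq \ker Q$ so that $\bar Q$ is well defined and surjective (hence the fibres $G_f^i$ are equinumerous), and that the images of $1$ and of complex conjugation really sit over $1$ and $-1$. This last point is exactly what makes $i = \pm 1$ exceptional --- there the ``free'' element $1$ or $c$ removed from $H_f$ has no abelian form compensating for it, which is why those two grades admit no density-$\tfrac18$ form.
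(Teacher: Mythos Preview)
Your proposal is correct and follows essentially the same strategy as the paper's proof, with two small differences worth noting.

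For part (ii), the paper simply cites \cite[Theorem I]{Bim}, whereas you give a self-contained argument from the structural inputs recorded in \cref{sec:lvl9} (locality of $A(9)$, the fact that $\mm(9)$ is the closed $\FF_2$-span of the $T_\ell$, and the duality pairing). Your argument is valid and in this particular setting arguably more transparent than invoking the general machinery of \cite{Bim}; the key point you correctly identify is that a continuous $f^\ast$ killing all $T_\ell$ must kill $\mm(9)$ and hence factor through the residue field, after which duality pins $f$ down to $0$ or $C$.

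For part (iii) your treatment of $i \neq \pm 1$ matches the paper exactly. For $i = \pm 1$ both you and the paper invoke $t(1) = t(c) = 0$; you rightly flag the bookkeeping question of whether the $(\ZZ/24\ZZ)^\times$-grading descends from $G$ to $G_f$, i.e.\ whether $\ker t_f \subseteq \ker Q$. The paper glosses over this as well, and it is not obvious in general. The clean fix, which avoids the question entirely, is to compute $\bel(f)$ over $\Gal(E_f \cdot \QQ(\mu_{24})/\QQ)$ rather than $G_f$: the density formula \eqref{density-calculation-with-chebotarev} holds over any finite quotient through which $f^\ast \circ t$ factors, the grading visibly descends to this compositum, and the images of $1$ and $c$ still lie in the fibres over $1$ and $-1$ with trace zero. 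With that adjustment your argument is complete.
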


\begin{proof}\leavevmode
\begin{enumerate}[itemsep = 3pt]
\item The association $\ell \mapsto a_\ell(f)$ is frobenian with field of determination~$E_f$ a subfield of $E_9$, hence $2$-power-order over $\QQ$, and the expression in \eqref{density-calculation-with-chebotarev} makes it clear that the denominator of~$\bel(f)$ divides $[E_f: \QQ].$ 

\item See Bellaïche \cite[Theorem~I]{Bim}. 

\item The argument is analogous to the proof of \cref{item:densityleq1/4}. First of all, it is a priori clear that $\bel(f) \leq \frac{1}{8}$ for any graded~$f$ in $K(9)$. Next, the description in \cref{ab9} makes it clear that $\bel(\aa_i) = \frac{1}{8}$ for each $i \equiv \pm 5, \pm 7, \pm 11$ modulo $24$. On the other hand, under the assumption that $f \neq \aa_i$ we have $g = f + \aa_i \neq 0$ and $a_\ell(g) = 0 \iff a_\ell(f) = 1$ for primes $\ell \equiv i \pmod{24}$ so that $\bel(g) = \frac{1}{8} - \bel(f)$. Since $\bel(g) > 0$ we have $\bel(f) < \frac{1}{8}.$ And if $i \equiv 1$ (respectively, $23$) modulo $24$, then any $g$ in the conjugacy class of $1$ (respectively, a complex conjugation $c$) in the $i$-graded piece of~$G_f$ has $t(g) = 0$, so that a nonzero proportion of elements of $G_f$ map to the kernel of $f^\ast$, so that 
$\bel(f) < \frac{1}{8}.$\qedhere 
 \end{enumerate}
\end{proof}

We record a corollary analogous to \cref{alldeltapowers}.
\begin{corollary}\label{allcpowers}
For all $n \geq 1$ we have $0 \leq \bel(C^n)\leq \frac{1}{8},$ with $\bel(C^n) = 0$ if and only if $n = 1$ or $\gcd(n, 6) > 1$; and $\bel(C^n) = \frac{1}{8}$ if and only if $n = 5, 7, 13.$
\end{corollary}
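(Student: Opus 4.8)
The plan is to split on $\gcd(n,6)$ and reduce everything to the three parts of the preceding theorem together with the explicit description of the abelian forms $\aa_i$ in \cref{ab9}.

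First I would dispose of the vanishing cases. If $2 \mid n$, then $C^n = (C^{n/2})^2$ is a square, so its $q$-expansion is supported on even exponents; in particular $a_\ell(C^n) = 0$ for every odd prime $\ell$, whence $\bel(C^n) = 0$. If $3 \mid n$, then using $C^3 = \Delta(q^3)$ from \eqref{c3delta} we get $C^n = \Delta^{n/3}(q^3)$, whose coefficients are supported on multiples of $3$; since the primes counted in $\bel$ are all $\geq 5$, again $a_\ell(C^n) = 0$ and $\bel(C^n) = 0$. This settles the implication $\gcd(n,6) > 1 \Rightarrow \bel(C^n) = 0$, and together with $\bel(C) = 0$ (the exceptional case, part (ii) of the preceding theorem, \cref{cond:non-zero}) it gives one direction of the vanishing claim.

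Next, for $\gcd(n,6) = 1$ the form $C^n$ lies in the graded piece $K(9)^n$ (by the observation that $C^i \in K(9)^i$ for $i$ coprime to $24$), so the preceding theorem applies directly. Part (ii) gives $\bel(C^n) > 0$ as soon as $C^n \neq C$; since $C = q + O(q^{25})$ has order $1$ at infinity, $C^n$ has order exactly $n$, so distinct powers of $C$ are distinct (indeed linearly independent) and $C^n = C$ forces $n = 1$. Hence $\bel(C^n) > 0$ for all $n > 1$ coprime to $6$, which completes the vanishing dichotomy. Part (iii) gives $\bel(C^n) \leq \tfrac{1}{8}$, with equality if and only if $C^n$ is one of the six abelian forms $\aa_5, \aa_7, \aa_{11}, \aa_{13}, \aa_{17}, \aa_{19}$; since the non-graded cases already give $\bel = 0 \leq \tfrac18$, this yields the global bound $0 \leq \bel(C^n) \leq \tfrac18$.

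It then remains to determine which powers of $C$ occur among the $\aa_i$, and here lies the only real (though still elementary) obstacle. By \cref{ab9} we have $\aa_5 = C^5$, $\aa_7 = C^7$, $\aa_{13} = C^{13}$, so $\bel(C^5) = \bel(C^7) = \bel(C^{13}) = \tfrac18$. Conversely, $\bel(C^n) = \tfrac18$ forces $C^n = \aa_i$ with $n \equiv i \pmod{24}$, and comparing orders at infinity pins down $n$ as the order of $\aa_i$. For $i \in \{5,7,13\}$ this gives $C^n = C^i$, hence $n = i$ by distinctness of the powers of $C$. For $i \in \{11,17,19\}$ I would rule out $C^n = \aa_i$ by working in the domain $\FF_2[F]$ with $C = F + F^4$: from \cref{ab9}, $\aa_{11} = C^3 F^8$, $\aa_{17} = C F^{16}$, and $\aa_{19} = C^3 F^{16}$ have orders $11,17,19$, so the only candidates are $C^{11}, C^{17}, C^{19}$; but an equality such as $C^{11} = C^3 F^8$ would give $C^8 = F^8$ after cancelling the nonzerodivisor $C^3$, hence $C = F$ by injectivity of Frobenius in characteristic $2$, an absurdity (and likewise $C^{17} = C F^{16}$, $C^{19} = C^3 F^{16}$ each force $C = F$). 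Thus none of $\aa_{11}, \aa_{17}, \aa_{19}$ is a power of $C$, so $\bel(C^n) = \tfrac18$ exactly for $n \in \{5,7,13\}$. Assembling the cases gives the stated characterizations of $\bel(C^n) = 0$ and $\bel(C^n) = \tfrac18$.
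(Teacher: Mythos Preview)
Your proof is correct and follows essentially the same approach as the paper: reduce everything to the preceding theorem together with \cref{ab9}. The paper's own proof is terser---it handles $\gcd(n,6)>1$ in one stroke by observing that $C^n$ is supported on the residue class $n \pmod{24}$, which contains at most finitely many primes, and leaves the remaining implications (including the verification that $\aa_{11},\aa_{17},\aa_{19}$ are not powers of $C$) implicit in the statement of the theorem. Your argument cancelling in the domain $\FF_2[F]$ and invoking injectivity of Frobenius to rule out $C^n=\aa_i$ for $i\in\{11,17,19\}$ is a genuine detail the paper does not spell out, so in that respect your write-up is more complete.
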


\begin{proof}
It remains to prove only that $\bel(C^n) = 0$ whenever $3 \mid n$. This follows from the grading: since~$C$ is supported on the arithmetic sequence $1$ mod $24$, it follows that $C^n$ is supported on the arithmetic sequence $n$ mod $24$. This sequence contains at most finitely many primes whenever $\gcd(n, 6) > 1$. 
\end{proof}

\section{Proofs of the main theorems}\label{finalsection}\label{proofs}

In this section we prove \cref{thm:densityyes,,thm:zero-density,,thm:1/2,,thm:main-thm} stated in the introduction.

\subsection{Key lemmas and the proof of \texorpdfstring{\cref{thm:densityyes}}{Theorem \ref{thm:densityyes}}}

In this subsection we give a final reinterpretation of our notion of eta power density (\cref{def:subdensity}) in terms of Bellaïche density $\bel(f)$. We deduce that our density is always well defined. 

\begin{prop}\label{tellf}
Fix $r$; let $f := P_r$, $m := m_r$, and $b := b_r.$
For $c$ with $\gcd(c, m) = 1$, let $1\leq u_c< m$ be the least integer congruent to $b c^{-1}$ modulo $m$. Then 
$$\del(r) = \sum_{\gcd(c, m) = 1} \bel(T_{u_c} f).$$
\end{prop}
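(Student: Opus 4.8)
The plan is to reduce \cref{tellf} to \cref{dplemma} together with a support argument that lets us drop the congruence restriction. First I would recall from \cref{1} that $\del(r) = \mathcal D_2(P_r, m_r)$, so that with $f = P_r$, $m = m_r$, $b = b_r$ the modular-form version of \cref{dplemma} gives
\begin{equation*}
\del(r) = \sum_{\gcd(c, m) = 1} \operatorname{density}\big(\{\ell \equiv c \pmod{m} : a_\ell(T_{u_c} f) \not\equiv 0 \pmod{2}\}\big).
\end{equation*}
It therefore suffices to show, for each $c$ coprime to $m$, that the restricted density on the right equals the unrestricted Bellaïche density $\bel(T_{u_c} f)$.

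The key observation is that the condition $\ell \equiv c \pmod m$ is automatic for all but finitely many primes contributing to the count. Indeed, for a prime $\ell \nmid u_c$ the Hecke multiplicativity relation used in \cref{dplemma} gives $a_\ell(T_{u_c} f) = a_{u_c \ell}(f)$. Since $f = P_r$ is supported on the arithmetic progression $b$ modulo $m$, nonvanishing $a_{u_c\ell}(f) \neq 0$ forces $u_c \ell \equiv b \pmod m$; and because $u_c \equiv b c^{-1} \pmod m$ by definition, this is equivalent to $\ell \equiv c \pmod m$. Hence, after discarding the finitely many primes dividing $u_c$, the sets $\{\ell : a_\ell(T_{u_c}f) \neq 0\}$ and $\{\ell \equiv c \pmod m : a_\ell(T_{u_c}f) \neq 0\}$ coincide, so they have the same density, which is precisely $\bel(T_{u_c}f)$.

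To invoke $\bel$ I would first confirm that $T_{u_c} f$ genuinely lands in a space where Bellaïche density is defined. Since $\gcd(b, m) = 1$ and $\gcd(c, m) = 1$, the residue $u_c$ is coprime to $m$, hence coprime to $6$; thus $T_{u_c}$ lies in the Hecke algebra generated by the $T_\ell$ with $\ell$ odd (in the level-$1$ case $3 \mid r$, where $m_r$ is a power of $2$) or with $\ell \geq 5$ (in the level-$9$ case $3 \nmid r$). By \cref{linktodelta} the form $f$ lies in $K$ or in $K(9)$ accordingly, and these operators preserve $K$ and $K(9)$, so $T_{u_c} f$ is again a form in the relevant space and $\bel(T_{u_c} f)$ makes sense. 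Summing over $c$ then yields the claimed formula.

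The only real subtlety — and the step I would be most careful about — is the passage from the restricted to the unrestricted density: one must verify that the congruence $\ell \equiv c \pmod m$ is \emph{forced} by nonvanishing of the coefficient rather than imposed externally, which rests precisely on the support of $P_r$ and the defining relation $u_c \equiv b c^{-1} \pmod m$. Everything else is bookkeeping to ensure each $T_{u_c} f$ lands in a space where $\bel$ is defined.
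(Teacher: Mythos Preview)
Your argument is correct and matches the paper's own proof, which also invokes \cref{dplemma} and then removes the congruence restriction by appealing to the $(\ZZ/8\ZZ)^\times$- and $(\ZZ/24\ZZ)^\times$-gradings on $K$ and $K(9)$; your support argument is precisely that grading restated at the level of $q$-expansions.

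One small slip worth flagging: your claim that $u_c$ is always coprime to $6$ fails when $8 \mid r$ and $3 \nmid r$, where $m_r = 3$ and $u_c = 2$ can occur (and indeed \cref{explicit} has a $\bel(U_2 f)$ term in exactly that row). This does not damage your core argument---the support reasoning applies equally well to $T_2 \equiv U_2 \pmod{2}$, and $\bel$ is defined for any mod-$2$ modular form, not only for elements of $K(9)$---but the sentence asserting that $T_{u_c}$ lies in the anemic Hecke algebra generated by $T_\ell$ with $\ell \geq 5$ should be amended or simply dropped for that case.
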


\begin{proof}
This follows directly from \cref{dplemma} and the fact that for $K$ is a graded $A$-module and $K(9)$ is a graded $A(9)$-module, so that the conditions on $\ell$ are unnecessary. 
\end{proof}
    
\begin{corollary}\label{explicit}
    
We have the following expressions for $\del(r)$ in terms of Bellaïche density. 

\renewcommand{\arraystretch}{1.3}
\begin{center} \begin{tabular}{l|C|C|l|l}
description of $r$ & \gcd(r, 24)  & m_r & 
$f = P_r$ & $\del(r)$\\ 
\hline\hline
$r = 3s$ with $s$ odd & 3 & 8 & 
$\Delta^s$ & 
$\bel(f) + \bel(T_3 f) + \bel(T_5 f) + \bel(T_7 f)$
\\\hline
$r = 6s$ with $s$ odd & 6 & 4 & 
$\Delta^s$ & 
$\bel(f) + \bel(T_3 f)$
\\\hline
$r = 12 s$ with $s$ odd & 12 & 2 & 
$\Delta^s$ & $\bel(f)$
\\\hline
$r = 24 s$ & 24 & 1 & 
$\Delta^s$ & $\bel(f)$
\\\hline\hline
\multirow{2}{*}{$r$ prime to $6$} & \multirow{2}{*}{$1$} & \multirow{2}{*}{$24$} & 
\multirow{2}{*}{$C^r$} & 
$\bel(f) + \bel(T_5 f) + \bel(T_7 f) + \bel(T_{11} f) + $\\
& & & & $\qquad + \bel(T_{13}f) + \bel(T_{17} f) + \bel(T_{19} f) + \bel(T_{23} f)$
\\\hline
$r = 2s$ with $s$ prime to $6$ & 2 & 12 & 
$C^s$ & 
$\bel(f) + \bel(T_5 f) + \bel(T_7 f) + \bel(T_{11} f)$
\\\hline
$r = 4s$ with $s$ prime to $6$ & 4 & 6 & 
$C^s$ & 
$\bel(f) + \bel(T_5 f)$
\\\hline
$r = 8s$ with $3 \nmid s$ & 8 & 3 & 
$C^s$ & 
$\bel(f) + \bel(U_2 f)$ 
\end{tabular}
\end{center}
\end{corollary}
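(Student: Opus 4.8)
The plan is to read everything off from \cref{tellf}, which already gives $\del(r) = \sum_{\gcd(c, m_r) = 1} \bel(T_{u_c} f)$ with $f = P_r$, and to combine it with the mod-$2$ identification of $P_r$ from \cref{linktodelta}. The first step is to simplify the indexing set. Since $b_r$ and $m_r$ are $r$ and $24$ each divided by their common divisor $\gcd(24,r)$, we have $\gcd(b_r, m_r) = 1$, so multiplication by $b_r$ permutes $(\ZZ/m_r\ZZ)^\times$; hence the assignment $c \mapsto u_c$ (the least positive residue of $b_r c^{-1}$ modulo $m_r$) is a bijection from $(\ZZ/m_r\ZZ)^\times$ onto $\{u : 1 \leq u < m_r,\ \gcd(u, m_r) = 1\}$. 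Thus \cref{tellf} reads
\[
\del(r) = \sum_{\substack{1 \leq u < m_r \\ \gcd(u, m_r) = 1}} \bel(T_u f), \qquad f = P_r,
\]
with the convention $T_1 = \mathrm{id}$, and the corollary is obtained by unwinding this in each class of $r$ modulo $24$.

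The second step is purely computational: in each of the eight cases I compute $m_r = 24/\gcd(24, r)$, record $f = P_r$ via \cref{linktodelta} (namely $f = \Delta^{b_r}$ when $3 \mid r$ and $f = C^{b_r}$ otherwise, with $b_r = s$ or $r$ as displayed in the table, the stated divisibility hypotheses being exactly what forces the value of $\gcd(24,r)$), and list the units $u \in [1, m_r)$ explicitly. For instance $m_r = 8$ gives $\{1,3,5,7\}$ and the four-term sum $\bel(f) + \bel(T_3 f) + \bel(T_5 f) + \bel(T_7 f)$; $m_r = 24$ gives $\{1,5,7,11,13,17,19,23\}$ and the eight-term sum; and the values $m_r \in \{4, 2, 6, 12\}$ give the shorter sums recorded. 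In every one of these cases the units $u < m_r$ happen to be either $1$ or a prime --- an odd prime $\leq 7$ for the level-$1$ forms $\Delta^{b_r}$, a prime $\geq 5$ for the level-$9$ forms $C^{b_r}$ --- so each $T_u$ is a genuine Hecke operator already acting on $K$ or $K(9)$ (using the grading \eqref{hecke9} to see $T_u f$ is again a form), and $\bel(T_u f)$ is meaningful. The degenerate case $m_r = 1$ (that is, $24 \mid r$) has no units to sum over beyond $u = 1$, yielding $\del(r) = \bel(f)$ in agreement with the remark following \cref{def:beldensity}.

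The one genuinely delicate point, and the only real obstacle, is the final row $r = 8s$ with $3 \nmid s$, where $m_r = 3$ and the coprime residues are $\{1, 2\}$, producing a term indexed by $u = 2$. Because $2$ is the residue characteristic, I would not treat $T_2$ as an ordinary prime Hecke operator but instead invoke the decomposition $T_2 = U_2 + 2^{k-1} V_2$ (with $k \geq 2$ the weight of a lift of $f = C^s$): since $2^{k-1} \equiv 0 \pmod 2$, we have $T_2 f = U_2 f$ as mod-$2$ $q$-expansions, which gives the stated term $\bel(U_2 f)$. I would emphasize that $U_2 f$ need not vanish here: although membership $C^{b_r} \in K(9) = \ker U_2 \cap \ker U_3$ would force $U_2 f = 0$ when $\gcd(s, 6) = 1$, the hypothesis only demands $3 \nmid s$, so $s$ may be even, in which case $f = C^s$ is not in $K(9)$ and $U_2 f = U_2((C^{s/2})^2) = C^{s/2} \neq 0$. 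Beyond this reinterpretation the argument is entirely routine bookkeeping, with no analytic or structural content left to overcome.
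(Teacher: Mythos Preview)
Your argument is correct and follows exactly the paper's approach, which is simply to combine \cref{tellf} with \cref{linktodelta}; you have merely filled in the bookkeeping that the paper leaves implicit. Your treatment of the last row --- reindexing the sum over $u_c$ as a sum over units, and then interpreting the $u=2$ term via $T_2 \equiv U_2 \pmod 2$ --- is the right way to make sense of that entry and matches how the paper uses it later (e.g.\ in the proof of \cref{thm:zero-density}).
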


\begin{proof}
The proof is a combination of \cref{tellf} and \cref{linktodelta}.
\end{proof}

\begin{proof}[Proof of \cref{thm:densityyes}]
Follows from \cref{explicit}. \end{proof}

\begin{remark}\label{remark7.4}
More generally one can use the expression in \cref{dplemma} to show that, for any integral characteristic-zero modular form $f$ supported on an arithmetic progression $b$ modulo $m$, and any prime~$p$, the limit defining the density $\mathcal D_p(f, m)$ in  \cref{1} exists and is rational. Indeed, the set $S_c := \{\ell \equiv c \pmod{m}: a_\ell(T_{u_c} f)  \not\equiv 0 \pmod{p}\}$ from \cref{dplemma} is the intersection of two frobenian sets, the $\QQ(\zeta_m)$-frobenian set $\{\ell\equiv c \mod{m}\}$ and the set $\{\ell: a_\ell(T_{u_c} f)\not\equiv 0 \pmod{p}\}$, frobenian with respect to any field of determination for $T_{u_c} f$. Since the intersection of two frobenian sets is frobenian with respect to the compositum of the two relevant fields, we conclude.
\end{remark}

\subsection{Proof of \texorpdfstring{\cref{thm:zero-density}}{Theorem \ref{thm:zero-density}}}
In this subsection we establish a precise condition when our notion of density of eta powers (\cref{def:subdensity}) vanishes.

We first establish that $\del(r) = 0$ for $r = 1,2,3,4,6,8,12,16, 24$ (so $r$ properly dividing $32$ or $48$) and for $r$ of the form $16s$ for $s \equiv 0, \pm 2, 3$ modulo~$6$ (in other words, $r$ a multiple of $32$ or $48$). 
For $r$ dividing 24 we give an direct proof to show that in specific cases the density can be computed with  elementary analytic methods.
Indeed, for $r=1,2,3,4,6,8,12,24$ we have $b_r=\frac{r}{\gcd(24,r)}=1$ and thus by \cref{linktodelta} we have that
\[
     P_3 \equiv P_6 \equiv P_{12} \equiv P_{24} \equiv \Delta \pmod{2}.
\]
\[
    P_1 \equiv P_2 \equiv P_4 \equiv P_8 \equiv C \pmod{2}.
\]
Recall now that
\[
    \Delta(q) \equiv \sum_{\text{$n$ odd}} q^{n^2}, \quad
    C(q) \equiv \sum_{\substack{\text{$n$ odd} \\ 3 \nmid n}} q^{n^2}.
\]
Now let $x$ be any positive real number. It is easy to see that the maps $\ell \mapsto \delta_{{\ell,r}}$ are injective in $\ell$ for all~$r$ and bounded in size by $O(x)$ uniform in $r$. Both series are only supported on at most the odd squares. By the Prime Number Theorem there are asymptotically $\frac{x}{\log x}$ primes $\ell$ up to $x$, and $O(\sqrt{x})$ square numbers in the same range.  Hence, we have that
\[
    \frac{\#\{\text{$\ell \leq x$ prime} \mid p_r(\delta_{\ell,r}) \equiv 1 \pmod 2\}}{\pi(x)} = O\left(\frac{\sqrt{x}}{\frac{x}{\log x}}\right) = O\left(\frac{\log x}{\sqrt{x}}\right) \overset{x \rightarrow \infty}{\longrightarrow} 0.
\]

An analogous argument applies to $r=16$, as in this case have $P_{16}(q) \equiv C^2(q) \equiv C(q^2)$, which is supported on at most twice the odd squares. 

In the case $r=32n$ one finds that $P_{32n}(q) \equiv C^{4n}(q) \equiv C^{n}(q^4)$. Therefore, the $q$-series of $P_{32n}(q)$ is a $q$-series in $q^4$. However, one can easily see that $\delta_{32n,\ell}$ can never be a multiple of $4$.

If we have $r=48n$ again by \cref{linktodelta} we do have $P_r \equiv \Delta^{2n}$. Moreover, it is easy to see that we have in this case that $\mu_{\ell, r} = 1$ and $\delta_{\ell,r}$ is odd while the $q$-series $\Delta^{2n}$ modulo $2$ is supported only on even powers, this proves the claim.

We now show the converse. Let $r \geq 1$ be such that $\del(r)=0$. We claim that \[r \in \{1,2,3,4,6,8,12,16,24\} \cup \{32n,48n \mid n \geq 1\}.\] By  \cref{linktodelta} we have that $P_r \equiv \Delta^{b_r}$ or $P_r \equiv C^{b_r}$ modulo $2$.  

\fbox{\( 3 \mid r \)} In this case we have $P_r \equiv \Delta^{b_r}$. We know by \cref{alldeltapowers} that $\bel(\Delta^{b_r}) = 0$ if and only if $b_r=1$ or $b_r$ is even. The case $b_r=\frac{r}{\gcd(24,r)}=1$ occurs precisely if $r\in \{3,6,12,24\}$. On the other hand we see that $b_r$ can only be even if $16 \mid r$ and since by assumption we also have $3 \mid r,$ we find that in this case one must have $48 \mid r$.

\fbox{\( 3 \nmid r \)} Note that $3 \nmid b_r$ as well. By \cref{allcpowers} we know that for such $b_r$ we have $\bel(C^{b_r}) = 0$ only if~$b_r = 1$ or $b_r$ is even. As before we can have $b_r = 1$ only if $r \in \{1,2,4,8\}$. For $b_r$ to be even we must have~$16 \mid r$. Suppose $r = 16n$ with $n > 1$ odd (and still prime to $3$), so that $b_r = 2n$. Then, by \cref{explicit}
\[
\del(r) = \bel(C^{2n}) + \bel(U_2 \,C^{2n}) = \bel(C^n) > 0, 
\]
because $U_2$ is a left inverse for the map $f \mapsto f^2$ modulo $2$ and 
\cref{thm:lvl9:nonzero} guarantees that $\bel(C^n) >0$. Thus if $\del(r)=0$ with $b_r$ even we must have $32 \mid r$. This finishes the proof.
\hfill\qedsymbol

\subsection{Proof of \texorpdfstring{\cref{thm:1/2}}{Theorem \ref{thm:1/2}}}
To show each of the claims in this theorem, we apply the explicit formulas for \( \del(r) \) obtained in \cref{explicit}, to write \( \del(r) \) in terms of Bella\"iche densities.  We recall that in level~$1$ we have  \( \bel(\Delta^n) \leq \frac{1}{4} \) by virtue of the \( (\Z/8\Z)^\times \)-grading, and in level $9$ we have \( \bel(C^n) \leq \frac{1}{8} \) by virtue of the \( (\Z/24\Z)^\times \)-grading.
This means we can immediately update the table from \cref{explicit} with an  additional column giving a bound on \( \del(r) \), as follows.
\renewcommand{\arraystretch}{1.3}
\begin{center} \begin{tabular}{l|l|l|l}
description of $r$ & 
$f = P_r$ & $\del(r)$ & $\del(r)$ bound\\ 
\hline\hline
$r = 3s$ with $s$ odd & 
$\Delta^s$ & 
$\bel(f) + \bel(T_3 f) + \bel(T_5 f) + \bel(T_7 f)$ 
& $\leq1$
\\\hline
$r = 6s$ with $s$ odd & 
$\Delta^s$ & 
$\bel(f) + \bel(T_3 f)$ &
$\leq\frac{1}{2}$
\\\hline
$r = 12 s$ with $s$ odd & 
$\Delta^s$ & $\bel(f)$&
$\leq\frac{1}{4}$
\\\hline
$r = 24 s$ & 
$\Delta^s$ & $\bel(f)$&
$\leq\frac{1}{4}$
\\\hline\hline
\multirow{2}{*}{$r$ prime to $6$} & 
\multirow{2}{*}{$C^r$} & 
$\bel(f) + \bel(T_5 f) + \bel(T_7 f) + \bel(T_{11} f) + $
& \multirow{2}{*}{$\leq1$} \\
& & $ \qquad + \bel(T_{13}f) + \bel(T_{17} f) + \bel(T_{19} f) + \bel(T_{23} f)$ & 
\\\hline
$r = 2s$ with $s$ prime to $6$ & 
$C^s$ & 
$\bel(f) + \bel(T_5 f) + \bel(T_7 f) + \bel(T_{11} f)$&
$\leq\frac{1}{2}$
\\\hline
$r = 4s$ with $s$ prime to $6$ & 
$C^s$ & 
$\bel(f) + \bel(T_5 f)$&
$\leq\frac{1}{4}$
\\\hline
$r = 8s$ with $3 \nmid s$ & 
$C^s$ & 
$\bel(f) + \bel(U_2 f)$ &
$\leq\frac{1}{4}$
\end{tabular}
\end{center}

We now go through the points of \cref{thm:1/2} case by case. 

\fbox{\( \del(n) < 1 \)}  It is clear from the table that \( \del(n) \leq 1 \).  For \( \del(n) \overset{?}{=} 1 \) to occur, we must have either: \( n = 3s \) with \( s \) odd,  or \( n \) prime to 6.

Suppose \( n = 3s \) with \( s \) odd, so \( f = P_r = \Delta^s \).  Then to attain \( \del(n) \overset{?}{=} 1 \), each of \( \bel(f), \bel(T_3 f), \bel(T_5 f) \) and~\( \bel(T_7 f) \) must attain its maximum value, namely \( \frac{1}{4} \).  By \cref{item:densityleq1/4}, \( \bel(\Delta^s) = \frac{1}{4} \) occurs only for~\( s = 3, 5 \).  For these values \( T_s \Delta^s = \Delta \) and \( \bel(\Delta) = 0 \) forces \( D(n) \leq \frac{3}{4} < 1 \) already.

Likewise, if \( n \) is prime to 6, then \( f = P_n = C^n \).  To attain \( \del(n) \overset{?}{=} 1 \), each of \( \bel(f) \) and \( \bel(T_\ell C^n) \) for~$\ell = 5, 7, 11, 13, 17, 19, 23$ must attain its maximum value, namely \( \frac{1}{8} \).  By \cref{ab9} \( \bel(C^n) = \frac{1}{8} \) occurs only for \( n = 5, 7, 13 \).  For these values \( T_n C^n = C \) and \( \bel(C) = 0 \) forces \( \del(n) \leq \frac{7}{8} < 1 \) already.

\fbox{\( \del(2n) < \frac{1}{2} \)}  Set \( r = 2n \). It's clear from the table that \( \del(r) \leq \frac{1}{2} \), as rows one and five are excluded, since~\( r \) is even.  For~\( \del(r) \mathrel{\smash{\overset{?}{=}}} \frac{1}{2} \) to occur, we must either have: \( r = 6s \) with \( s \) odd, or \( r = 2s \) with \( s \) prime to 6.

Suppose \( r = 6s \) with \( s \) odd, so \( f = P_r = \Delta^s \).  Then to attain \( D(6r) \overset{?}{=} \frac{1}{2} \), both \( \delta(f) \) and \( \bel(T_3 f) \) must attain the value \( \frac{1}{4} \).  Again \cref{item:densityleq1/4} shows \( \bel(\Delta^s) = \frac{1}{4} \) occurs only for \( s = 3, 5 \), then \( T_3 \Delta^5 = T_3 \Delta^3 = \Delta \) and \( \bel(\Delta) = 0 \) immediately gives \( \del(r) = \frac{1}{4} < \frac{1}{2} \).

Likewise, if \( r = 2s \), with \( s \) prime to 6, then \( f = P_r = C^s \).  To attain \( \del(r) \overset{?}{=} \frac{1}{2} \), each of \( \bel(f) \), \( \bel(T_5 f) \), \( \bel(T_7 f) \), \( \bel(T_{11} f) \) must attain the value \( \frac{1}{8} \).  \Cref{ab9} shows \( \bel(C^s) = \frac{1}{8} \) only for \(s = 5, 7, 13\), then \( T_5 C^s = 0\) immediately gives \( \del(r) \leq \frac{3}{8} < \frac{1}{4} \).

\fbox{\( \del(4n) < \frac{1}{4} \)}  Set \( r = 4n \).  It is clear from the table that \( \del(4n) \leq \frac{1}{4} \), as rows one, two, five and six are excluded.  We check each of the remaining possibilities in turn, to see whether and how \( \del(r) \mathrel{\smash{\overset{?}{=}}} \frac{1}{4} \) occurs.

If \( r = 12 s \), with \( s \) odd, then \( f = P_r = \Delta^s \).  We know \( \delta(f) = \frac{1}{4} \) only for \( s = 3, 5 \), which leads to the exceptions \( n = 9, 15 \).

If \( r = 24 s \), then \( f = P_r = \Delta^s \).  We know from \cref{alldeltapowers}, \( \bel(f) = \frac{1}{4} \) only for \( s = 3, 5 \) (also when \( s \) is not odd), which leads to the exceptions \( n = 18, 30 \).

If \( r = 4s \), with \( s \) prime to 6, then \( f = P_r = C^s \).  For \( \del(r) \overset{?}{=} \frac{1}{4} \), both \( \bel(f) \) and \( \bel(T_5 f) \) must attain the value~\( \frac{1}{8} \).  From \cref{ab9}, we know \( \bel(C^s) = \frac{1}{8} \) occurs only for \( s = 5, 7, 13 \), then \( T_5 C^s  = 0 \) immediately gives \( \del(r) = \frac{1}{8} < \frac{1}{4} \).

Finally, if \( r = 8s \) with \( 3 \nmid s \), then \( f = P_r = C^s \).  For \( \del(r) \overset{?}{=} \frac{1}{4} \) both \( \bel(f) \) and \( \bel(U_2 f) \) must attain the value~\( \frac{1}{8} \).  From \cref{allcpowers}, we know \( \bel(C^s) = \frac{1}{8} \) occurs only for \( s = 5, 7, 13 \) (also when $s$ is not coprime to 6), then \( U_2 C^s  = 0 \) immediately gives \( \del(r) = \frac{1}{8} < \frac{1}{4} \).

This completes the proof of \cref{thm:1/2}.  \hfill \qedsymbol{}

\begin{remark}\label{expectations}
Assume Bellaïche's density expectation (see p.~\pageref{18}): namely that for graded nonspecial $f$ we have~$\bel(f) = \frac{1}{8}$. It then follows from \cref{explicit} that $\del(6n) = \frac{1}{4}$, for $n$ odd, whenever both $\Delta^n$ and~$T_3 (\Delta^n)$ are nonspecial.  Likewise $\del(3n) = \frac{1}{2}$, for $n$ odd, whenever  all of  $\Delta^n$, $T_3 (\Delta^n)$, $T_5(\Delta^n)$, $T_7(\Delta^n)$ are nonspecial. Since the space of special forms is ``thin`` in the space of all forms (see the discussion at the end of \cref{tf}), for ``most" odd $n$ we should expect $\del(6n) = \frac{1}{4}$, and $\del(3n) = \frac{1}{2}$. Similarly, one might conjecture that $\bel(f) = \frac{1}{16}$ for nonspecial graded $f \in K(9)$, and that the set of special~$f$ is ``thin". The table in \cref{explicit} then invites guesses about values of $D(r)$ for various $r$.
\end{remark}

\subsection{Proof of \texorpdfstring{\cref{thm:main-thm}}{Theorem \ref{thm:main-thm}}}
Recall the sequences \( z_n = \frac{1}{3}(2 \cdot 4^n + 1) \)
and \( w_n = 4^n + 1\).
We note that infinite families \( z_n, 3z_n \) and \( w_n \)  appearing in \cref{thm:main-thm} correspond to three families of \( m(a,b) \)-basis vectors which are pure powers of \( \Delta \).  More precisely, we have
\begin{align}
    \label{seq:zn} \begin{split}m(2^n-1, 0) &= \Delta^{z_n} \,, \\
    m(2^n, 0) &= \Delta^{3 z_n} \,, \text{ and } \\
    m(0, 2^{n-1}) &= \Delta^{w_n} \,.
    \end{split}
\end{align}
See \cite[\S 5]{NS2} and \cite{Sletter} for the proof of \eqref{seq:zn}.\footnote{
In further unpublished work, Bellaïche has used Derksen's results on zeros of linear recurrences in charactersitic~$p$~\cite{derksen} combined with linear recurrences for the sequences $\{T_\ell(\Delta^n)\}_n$ from \cite[Th\'eor\`eme 3.1]{NS1} to prove that $z_n$, $3z_n$, and $w_n$ are the only powers of~$\Delta$ that are dihedral.}%

    We start by recalling from \cref{explicit}, the following expressions for \( \del \) in terms of \( \bel \), when \( r \) is odd.
        \begin{align*}
            \del(3  r) &= \bel(\Delta^r) + \bel(T_3 \Delta^r) + \bel(T_5 \Delta^r) + \bel(T_7 \Delta^r) \\
            \del(6  r) &= \bel(\Delta^r) + \bel(T_3 \Delta^r) \\
            \del(12  r) &= \bel(\Delta^r)  \\
            \del(24  r) &= \bel(\Delta^r)  
        \end{align*}
        For each case \( r = z_n, 3z_n, w_n \), we will find the relevant images under the Hecke operators, and calculate the resulting densities using \cref{bellaiche-density}.

        \fbox{Case $r = z_n$} From \eqref{seq:zn}, we know \( \Delta^{z_n} = m(2^n-1, 0) \), so that 
        \begin{align*}
            T_3 \Delta^{z_n} &= T_3 \, m(2^n-1, 0) = m(2^n-2, 0) \,, \\
            T_5 \Delta^{z_n} &= T_5 \, m(2^n-1,0) = 0 \\
            T_7 \Delta^{z_n} &= 0 \,.
        \end{align*}
        The first two follow as $m(a,b)$ is a basis adapted to the graded parameters \( (T_3,T_5) \), c.f. \cref{def:adaptedbasis}.  The last follows since \( T_7 \in xy \FF_2\llb x^2,y^2 \rrb \), and \( y = T_5 \) already annihilates \( \Delta^{z_n} \) (see \cite[(7)]{NS2}).  This gives
        \[
            D(3 z_n) = D(6 z_n) \,.
        \]

        We now compute \( \bel(\Delta^{z_n}) = \bel\big(m(2^n-1,0)\big) \) and \( \bel(T_3 \, \Delta^{z_n}) = \bel\big(m(2^n-2,0)\big) \) using \cref{bellaiche-density}.  Recall that if \( u(a) = \text{number of 1s in $a$ base 2} \), and \( v(a) = \text{2-adic valuation of $a$} \), then 
        \[
        \bel\big(m(a,0)\big) = \bel\big(m(0,a)\big) = \frac{1}{2^{u(a) + v(a) + 1}} \,.
        \]
        
        For \( n \geq 1 \), we find
        \[
            u(2^n - 1) = n \,, \quad v(2^n - 1) = 0  \,,
        \]
        hence
        \[
            \bel\big(m(2^n-1, 0)\big) =  \frac{1}{2^{n+1}} \,.
        \] 
        Likewise
        \[
            u(2^n-2) = n-1 \,, \quad v(2^n-2) = \begin{cases}
            n & \text{if $n\geq2$,} \\
            \infty & \text{if $n=1$,}
            \end{cases}
        \]
        hence
        \[
            \bel\big(m(2^n-2,0)\big) = \begin{cases}
                \frac{1}{2^{n+1}} & \text{if $n\geq2$,} \\
                0 & \text{if $n=1$.}
            \end{cases}
        \]
        
        The results for 
        \begin{align*}
        \del(3 z_n) &= \del(6 z_n) = \bel(m(2^n-1,0)) + \bel(m(2^n-2,0)) \text{ and}  \\
          \del(12 z_n) &= \del(24 z_n) = \bel(m(2^n-1,0)) 
          \end{align*} follow directly.

          \fbox{Case \( r = 3z_n \)} From \eqref{seq:zn}, we know \( \Delta^{3z_n} = m(2^n, 0) \).  As before
          \begin{align*}
            T_3 \Delta^{3z_n} = m(2^n-1,0) \,, \\
            T_5 \Delta^{3z_n} = T_7 \Delta^{3z_n} = 0 \,,
          \end{align*}
          so that \( D(3 \cdot 3z_n) = D(6 \cdot 3z_n) \).  We computed \( \bel(T_3\Delta^{3z_n}) = \bel(m(2^n-1,0) \) above, so we only need to compute \( \bel(\Delta^{3z_n}) = \bel(m(2^n, 0)) \).  

          We find
          \[
                u(2^n) = 1 \,, v(2^n) = n \,,
          \]
          so that by \cref{bellaiche-density}
          \[
                \bel(m(2^n,0)) = \frac{1}{2^{n+2}} \,.
          \]
          
        The results for 
        \begin{align*}
        \del(3 \cdot 3z_n) &= \del(6 \cdot 3z_n) = \bel(m(2^n,0)) + \bel(m(2^n-1,0)) \text{ and}  \\
          \del(12 \cdot 3z_n) &= \del(24 \cdot 3z_n) = \del(m(2^n,0)) 
          \end{align*} follow directly.

          \fbox{Case \( r = w_n \)} From \eqref{seq:zn}, we know \( \Delta^{w_n} = m(0, 2^{n-1}) \).  This time, we compute
          \begin{align*}
            T_3 \Delta^{w_n} &= T_3 m(0, 2^{n-1}) = 0 \,, \\
            T_5 \Delta^{w_n} &= T_5 m(0, 2^{n-1}) = m(0, 2^{n-1}-1) \,, \\
            T_7 \Delta^{w_n} &= 0 \,,
          \end{align*}
          as \( T_7 \in x y \FF_2 \llb x^2,y^2 \rrb \) (see \cite[(7)]{NS2}), and already \( x = T_3 \) annihilates \( \Delta^{w_n} \).  This means \[
           \del(6 w_n) = \del(12 w_n) = \del(24 w_n) \,.
           \]
           Again using \cref{bellaiche-density}, we can compute \( \bel(\Delta^{w_n}) = \bel\big(m(0, 2^{n-1})\big) \) and \( \bel(T_5 \Delta^{w_n}) = \bel\big(m(0, 2^{n-1}-1)\big) \).

           We have 
           \[
                u(2^{n-1}) = 1 \,, \quad v(2^{n-1}) = n-1 \,,
           \]
           so that
           \[
                \bel\big(m(0, 2^{n-1})\big) = \frac{1}{2^{n+1}}.
           \]
           Likewise
           \[
                u(2^{n-1}-1) = n-1 \,, \quad v(2^{n-1}-1) = \begin{cases}
                    0 & \text{if $n\geq2$,} \\
                    \infty & \text{if $n=1$,}
                \end{cases}
           \]
           and hence
           \[
            \bel(m(0,2^{n-1}-1)) = \begin{cases}
                    \frac{1}{2^{n}} & \text{if $n\geq2$,} \\
                    0 & \text{if $n=1$.}
                \end{cases}
           \]

            The results for 
        \begin{align*}
        \del(3 w_n) &= \bel\big(m(0,2^{n-1})\big) + \bel\big(m(0,2^{n-1}-1)\big) \text{ and}  \\
           \del(6 w_n)  &= \del(12 w_n) = \del(24 w_n) = \bel\big(m(0,2^{n-1})\big)
          \end{align*} follow directly.
          \hfill \qedsymbol{}

\subsection{Computations for \texorpdfstring{\cref{thm:abelian-thm}}{Remark \ref{thm:abelian-thm}}}
\label{proof:rk:abelian}

As already noted in \cref{thm:abelian-thm}, one can carry out the calculations of \( D(3r), D(6r), D(12r) \) and \( D(24r) \) in a few further cases, namely for abelian $\Delta$ powers.  Referring to the table and remark thereafter in \cite[end of \S3.2]{Bmem}, the forms \(\Delta^7\),~\(\Delta^{19} \)~and~\(\Delta^{21}\) are abelian, with no higher pure $\Delta$ powers expected to be abelian.  (The case \( r = 1 \) giving \( D(3) = D(6) = D(12) = D(24) = 0 \) is handled as part of \cref{thm:zero-density}, while \( r = 3 \) and \( r = 5 \) covered as the \( n=1 \) cases of \cref{thm:main-thm}.)

We treat the case \( r = 7 \) extracting the relevant information from the table \cite[end of \S3.2]{Bmem}.
The cases~\(r = 19, 21\) are analogous.  We again recall from \cref{explicit} that
\begin{align*}
    \del(3 r) &= \bel(\Delta^r) + \bel(T_3 \Delta^r) + \bel(T_5 \Delta^r) + \bel(T_7 \Delta^r) \\
    \del(6 r) &= \bel(\Delta^r) + \bel(T_3 \Delta^r) \\
    \del(12 r) &= \bel(\Delta^r)  \\
    \del(24 r) &= \bel(\Delta^r)  
\end{align*}

\fbox{Case \( r = 7 \)}  From the table \cite[end of \S3.2]{Bmem}, we directly have
\[
    T_p \Delta^7 = \begin{cases}
        0 & \text{if $p \equiv 1 \pmod{8}$,} \\
        \Delta^5 & \text{if $p \equiv 3 \pmod{8}$,} \\
        \Delta^3 & \text{if $p \equiv 5 \pmod{8}$,} \\
        \Delta & \text{if $p \equiv 7 \pmod{16}$,} \\
        0 & \text{if $p \equiv 15 \pmod{16}$.}
    \end{cases}
\]
We have that \( a_p(\Delta^7) = a_1(T_p \Delta^7) = 1 \) if and only if \( p \equiv 7 \pmod{8} \).  Since primes \( p \) equidistribute in the~\( \varphi(16) = 8 \) residue classes modulo 16 (here $\varphi$ is the Euler totient function), we deduce directly that
\[
    \bel(\Delta^7) = \frac{1}{\varphi(16)} = \frac{1}{8} \,.
\]
Likewise \( T_3 \Delta^7 = \Delta^5 \), with~\( \bel(\Delta^5)  = \frac{1}{4} \).  We also have \( T_5 \Delta^7 = \Delta^3 \), with \( \bel(\Delta^3)  = \frac{1}{4} \), and \( T_7 \Delta^7 = \Delta \), with \( \bel(\Delta) = 0 \).  (Either use \cref{bellaiche-density} and that \( m(0,0) = \Delta \), \( m(1,0) = \Delta^3\), and \( m(0,1) = \Delta^5 \), or read these results off from the table in \cite[end of \S3.2]{Bmem}.)

Overall
\begin{align*}
    \del(3\cdot7) &= \bel(\Delta^7) + \bel(T_3 \Delta^7) + \bel(T_5 \Delta^7) + \bel(T_7 \Delta^7) = \frac{1}{8} + \frac{1}{4} + \frac{1}{4} + 0 = \frac{5}{8} \\
    \del(6\cdot7) &= \bel(\Delta^7) + \bel(T_3 \Delta^7)  = \frac{1}{8} + \frac{1}{4} = \frac{3}{8} \\
    \del(12\cdot7) &= \del(24\cdot7) = \bel(\Delta^7) = \frac{1}{8} \,. 
\end{align*}

Likewise, in the cases \( D(s), D(2s), D(4s) \), with \( s \) prime to 6, and \( D(8s) \), with \( 3 \nmid s \), we can obtain results from~\( C^5\),~\(C^7\),~\(C^{13}\) the three abelian powers of \( C \) introduced in \cref{ab9}.

\fbox{Level 9} In the level 9 case, we note that for \( \ell \in  (\Z/24\Z)^\times = \{ 1, 3, 7, 11, 13, 17, 19, 23 \} \), and \( s \in \{ 5, 7, 13 \} \)
\[
    T_\ell C^s = \begin{cases}
        C & \text{if \( \ell = s \)} \\
        0 & \text{otherwise} \,.
    \end{cases}
\]
Moreover \( U_2\, C^\text{odd} = 0 \).  So from \cref{explicit}, every term in \( D(r) \) except \( \delta(f) \) vanishes.  Hence for \( s = 5, 7, 13 \), we find \[
    D(s) = D(2s) = D(4s) = D(8s) = \delta(C^s) = \frac{1}{8} \,,
\]
where the explicit description in \cref{ab9} shows that \( \delta(C^s) = \frac{1}{8} \), for these \( s \).

\appendix

\section*{Appendix: Complements}\label[app]{complements}

In this section, we give some additional background, private motivation, and experimental observations in the direction of the parity of coefficients of modular forms, and the parity of the partition function in particular.\medskip

The famous aforementioned problem about the parity of the partition function was emphasized again in a talk of Ken Ono at the \href{http://www.mi.uni-koeln.de/Bringmann/Konferenz2024.html}{\emph{International Conference on Forms and $q$-Series}} (11--15 March 2024, Cologne).  During the extra-long commute back to Bonn\footnote{via tram, due to inopportunely scheduled railway construction work} the last two authors, together with Pieter Moree, conducted some numerical experiments and plotted the parity of the partition function as a random walk.  These computations were then extended by the first two authors to \( n \leq 2 \cdot 10^8 \) with some mildly optimized implementation of the pentagonal number theorem in \texttt{C}.

Under the hypothesis that the \( i^{\rm th} \) partition value \( p(i) \) is randomly distributed even or odd with equal probability independent from all other values, one can model the random walk as a family of independent identically distributed random variables \( P_i \), where \( P_i \) has distribution: \( -1 \) with probability \( \frac{1}{2} \) and \( 1 \) with probability \( \frac{1}{2} \).  One computes the variance and expectation (mean) to be \( \Var(P_i) = 1 \) and \( E(P_i) = 0 \), so that (by their assum\`ed independence), the Central Limit Theorem implies \( W \coloneqq \frac{1}{n} \sum_{i=1}^n P_i \) is distributed normally as \( \mathcal{N}(0, \frac{1}{n}) \) with mean 0 and variance \( \frac{1}{n} \).  With ${\sim}66\%$ probability, \( W \) is then within 1 standard deviation of the mean i.e. \( 0 \pm \frac{1}{\sqrt{n}} \) and with ${\sim}95\%$ probability is within 2 standard deviations i.e. \( 0 \pm \frac{2}{\sqrt{n}} \) of the mean.  After rescaling \( n W = \sum_{i=1}^n P_i \) should be within \( \pm \sqrt{n} \) of 0 with ${\sim}66\%$ probability, and within~\( \pm 2 \sqrt{n} \) of 0 with ${\sim}95\%$ probability.

The random walk of \( p(i) \) and its position relative to the curves \( 0\pm \sqrt{n} \) and \( 0 \pm 2\sqrt{n} \) (indicating the $\sim66\%$ and $\sim95\%$ probability regions) is shown in \cref{fig:pwalk}.

\begin{figure}[p]
     \includegraphics[width=0.7\textwidth]{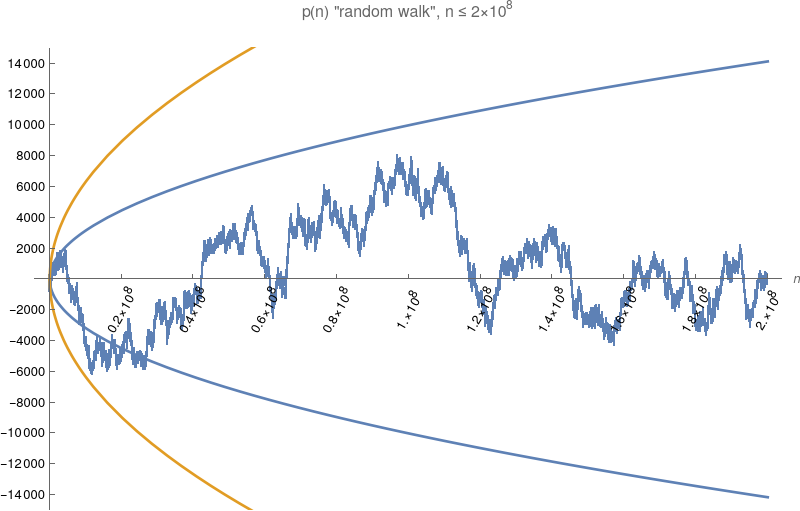}     
     \caption{Random walk obtained from parity of the partition function, moving up for~\( p(i) \) even, and down for \( p(i) \) odd.  More precisely, the plot shows \( (n, \sum_{i\leq n} (-1)^{p(i)}) \), for \( 1 \leq n \leq 2\cdot10^8 \).  The blue curve is \( 0 \pm \sqrt{n} \), and the orange curve is \( 0 \pm 2 \sqrt{n} \), indicating the regions 1 and 2 standard deviations away from the mean, with probabilities $\sim66\%$ and $\sim95\%$ respectively.}
     \label{fig:pwalk}
\end{figure}

\begin{figure}[p]
     \includegraphics[width=0.7\textwidth]{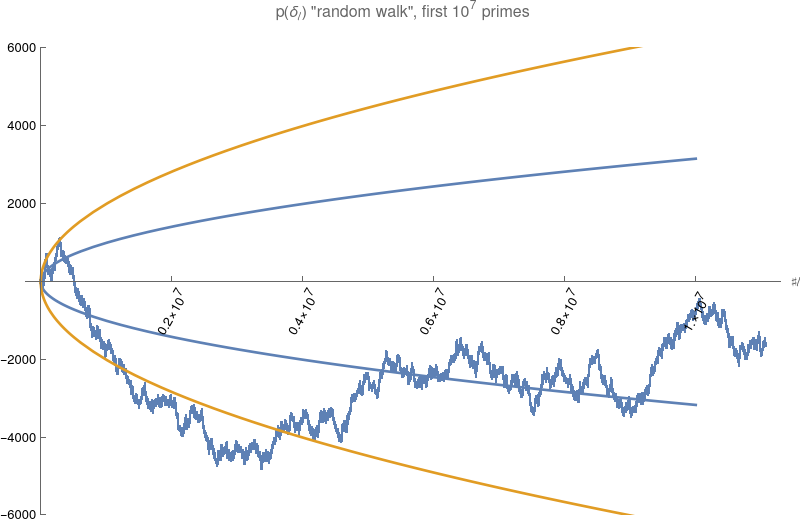}     
     \caption{Random walk obtained from parity of the partition function, moving up for~\( p(\delta_{\pi_i}) \) even, and down for \( p(\delta_{\pi_i}) \) odd.  More precisely, the plot shows \( (n, \sum_{i\leq n} (-1)^{p(\delta_{\pi_i})}) \), for \( 1 \leq n \leq 2\cdot10^8 \).  The blue curve is \( 0 \pm \sqrt{n} \), and the orange curve is \( 0 \pm 2 \sqrt{n} \), indicating the regions 1 and 2 standard deviations away from the mean, with probabilities $\sim66\%$ and $\sim95\%$ respectively.}
     \label{fig:deltawalk}
\end{figure}

Ono suggested looking for a parity bias in special subsequences of the partition function values, as a way to investigate the Partition Parity Conjecture and to potentially construct sequences with known parity.  Because of its importance in the theory of partition congruences, Ono specifically suggested looking for a bias in the random walk of the special subsequence  \( \delta_\ell \), for \( \ell \geq 5 \) prime, defined by \( 0 < \delta_\ell < \ell,\)  \(\delta_\ell \equiv 24^{-1} \pmod{\ell} \).  The first few terms of this sequence are enumerated below.
\begin{table}[H]
    \begin{tabular}{c|cccccccccccc}
$\ell$ & 5 & 7 & 11 & 13 & 17 & 19 & 23 & 29 & 31 & 37 & 41 & 43 \\ \hline
$\delta_\ell \equiv 24^{-1} \pmod{\ell} $ & 4 & 5 & 6 & 6 & 5 & 4 & 1 & 23 & 22 & 17 & 12 & 9
\end{tabular}
\label{tbl:deltaell}
\end{table}

Formally, let \( \pi_i \) be the \( i \)th prime at least 5, so \( \pi_1 = 5 \), \( \pi_2 = 7 \), \( \pi_3 = 11 \), and plot the random walk obtained from \( p(\delta_{\pi_i}) \).  The result is given in \cref{fig:deltawalk}.

Up to these points, both graphs are consistent with the hypothesis that the values \( p(i) \), respectively \( p(\delta_{\pi_i}) \), are randomly and independently distributed odd and even, with equal probability.  Although the random walk of \( p(\delta_{\pi_i}) \) does remain negative for a rather long stretch (between \( i \approx 0.1 \times 10^7 \) and \( 1.1 \times 10^7 \) in the graph in \cref{fig:deltawalk}), there is no \emph{obvious} bias, and it is entirely plausible that the walk will go positive if the graph is further extended. (It already gets quite close to crossing the horizontal axis around \( i \approx 1.0 \times 10^7 \)).  One can also observe that the graph in \cref{fig:deltawalk} strays outside the the one standard-deviation region until quite late.  Note however that there is an order of magnitude difference between the scales in  \cref{fig:pwalk} and  \cref{fig:deltawalk}, and this behaviour can be seen in the first graph until the first axis tick at around \( i \approx 0.2 \times 10^8 \).

\clearpage

\bibliographystyle{alpha}

\parskip=0pt           

\end{document}